\numberwithin{equation}{section}
\definecolor{col1}{rgb}{0.6, 0.7, 0.8}
\definecolor{col2}{rgb}{0.7, 0.8, 0.65}
\definecolor{col3}{rgb}{0.8, 0.9, 0.5}
\definecolor{col4}{rgb}{0.91,0.94, 0.53}
\definecolor{col5}{rgb}{0.98,0.99,0.6}
\definecolor{c4}{rgb}{0.58, 0.69, 0.62}
\definecolor{c3}{rgb}{0.64, 0.76, 0.68}
\definecolor{c2}{rgb}{0.74, 0.86, 0.78}
\definecolor{c1}{rgb}{0.84, 0.96, 0.88}
\definecolor{bondiblue}{rgb}{0.0, 0.58, 0.71}
\definecolor{d1}{rgb}{0.6, 0.6, 0.6}
\definecolor{d2}{rgb}{0.35, 0.68, 0.91}
\definecolor{d3}{rgb}{0.43, 0.62, 0.99}
\definecolor{d3a}{rgb}{0, 0.75, 0.99}
\definecolor{d4}{rgb}{0.2, 0.45, 0.8}
\definecolor{d5}{rgb}{0.49, 0.67, 0.84}
\definecolor{d6}{rgb}{0.3, 0.4, 0.9}
\definecolor{d7}{rgb}{0.35, 0.9, 0.95}
\definecolor{d8}{rgb}{0.6, 0.8, 0.95}
\definecolor{d2}{rgb}{0.3, 0.68, 0.8}
\definecolor{textcol}{rgb}{0.37,0,0.57}
\theoremstyle{plain}
\newtheorem{thm}{Theorem}[section]
\newtheorem{lemma}[thm]{Lemma}
\newtheorem{cor}[thm]{Corollary}
\newtheorem{conj}[thm]{Conjecture}
\newtheorem{proposition}[thm]{Proposition}
\theoremstyle{remark}
\newtheorem{df}[thm]{Definition}
\newcommand{\ordS}{\vec{\mathbb{S}}}
\begin{document}

\begin{frontmatter}
\title{Random planar trees and the Jacobian conjecture}
\runtitle{Random planar trees and the Jacobian conjecture}

\begin{aug}
\author[A]{\fnms{Elia}~\snm{Bisi}\ead[label=e1]{elia.bisi@tuwien.ac.at}},
\author[B]{\fnms{Piotr}~\snm{Dyszewski}\ead[label=e2]{pdysz@math.uni.wroc.pl}},
\author[C]{\fnms{Nina}~\snm{Gantert}\ead[label=e3]{nina.gantert@tum.de}},\\
\author[D]{\fnms{Samuel G. G.}~\snm{Johnston}\ead[label=e4]{samuel.g.johnston@kcl.ac.uk}},
\author[E]{\fnms{Joscha}~\snm{Prochno}\ead[label=e5]{joscha.prochno@uni-passau.de}},
\and
\author[F]{\fnms{Dominik}~\snm{Schmid}\ead[label=e6]{d.schmid@uni-bonn.de}}

\address[A]{Institute of Statistics and Mathematical Methods in Economics,
TU Vienna\printead[presep={,\ }]{e1}}

\address[B]{Mathematical Institute,
Wroc\l{}aw University \printead[presep={,\ }]{e2}}

\address[C]{Department of Mathematics,
TU Munich \printead[presep={,\ }]{e3}}

\address[D]{Department of Mathematics,
King's College London \printead[presep={,\ }]{e4}}

\address[E]{Faculty of Computer Science and Mathematics,
University of Passau \printead[presep={,\ }]{e5}}

\address[F]{Mathematical Institute, University of Bonn \printead[presep={,\ }]{e6}}

\end{aug}

\begin{abstract}
We develop a probabilistic approach to the {celebrated Jacobian conjecture, which states that} any {Keller map} (i.e.\ any polynomial mapping $F\colon \mathbb{C}^n \to \mathbb{C}^n$ whose Jacobian determinant is a nonzero constant) has a compositional inverse which is also a polynomial. 
The Jacobian conjecture may be formulated in terms of a problem involving labellings of rooted trees; we give a new probabilistic derivation of this formulation using multi-type branching processes.

Thereafter, we develop a simple and novel approach to the Jacobian conjecture in terms of a problem involving shuffling subtrees of $d$-Catalan trees, i.e.\ planar $d$-ary trees.
We also show that, if one can construct a certain Markov chain on large $d$-Catalan trees which updates its value by randomly shuffling certain nearby subtrees, and in such a way that the stationary distribution of this chain is uniform, then the Jacobian conjecture is true.

Finally, we use the local limit theory of large random trees to show that the subtree shuffling conjecture is true in a certain asymptotic sense, and thereafter use our machinery to prove an approximate version of the Jacobian conjecture, stating that inverses of Keller maps have small power series coefficients for their high degree terms.

\end{abstract}

\begin{keyword}[class=MSC]
\kwd[Primary ]{60J80}
\kwd{05C05}
\kwd{14R15}
\kwd[; secondary ]{60J10}
\kwd{13F25}
\end{keyword}

\begin{keyword}
\kwd{Jacobian conjecture}
\kwd{shuffle classes}
\kwd{polynomial automorphisms}
\kwd{power series}
\kwd{Galton-Watson trees}
\end{keyword}

\end{frontmatter}

\section{Introduction and overview}
\label{sec:intro}

\subsection{The Jacobian conjecture}
\label{subsec:jacobianIntro}

The Jacobian conjecture, a problem from algebraic geometry which is concerned with the invertibility of polynomial mappings in $n$ variables over a field of characteristic zero,
is one of the outstanding open problems in all of mathematics.
It was first conjectured by Ott-Heinrich Keller in 1939 \cite{keller1939} and has remained unsolved for over 80 years; even the case $n=2$ is still open.
It features as Problem 16 on Smale's list of \emph{18 mathematical problems for the $21^{\text{st}}$ century} \cite{smale} and a resolution seems to require completely novel ideas and methods.

We now give a brief account of the Jacobian conjecture, assuming no specialist knowledge of commutative algebra or algebraic geometry. For the sake of simplicity we work over $\mathbb{C}$, though this formulation is equivalent to more general formulations over fields of characteristic zero \cite{VDEbook}.
Consider the ring $\mathbb{C}[X_1,\ldots,X_n]$ of polynomials in $n$ variables with coefficients in $\mathbb{C}$.
A \textbf{polynomial mapping} $F = (F_1,\ldots,F_n)$ is a function $F\colon\mathbb{C}^n \to \mathbb{C}^n$ such that each component $F_i$ is an element of $\mathbb{C}[X_1,\ldots,X_n]$.
A polynomial mapping $F$ is said to be a \textbf{polynomial automorphism} if there exists a second polynomial mapping $G$ such that $F \circ G = G \circ F = {I}$, where ${I}$ is the identity mapping on $\mathbb{C}^n$. 
We define the \textbf{Jacobian matrix} of $F$, denoted $JF$, to be the $n \times n$ matrix whose $(i,j)^{\text{th}}$ entry is $(JF)_{i,j} := \frac{ \partial}{\partial X_j} F_i$.
We denote by $\det JF$ the \textbf{Jacobian determinant} of $F$, which is itself an element of $\mathbb{C}[X_1,\ldots,X_n]$.

We say a polynomial mapping is a \textbf{Keller mapping} if its Jacobian determinant is equal to a non-zero constant in $\mathbb{C}$.
It is easily verified that every polynomial automorphism is a Keller mapping.
The celebrated Jacobian conjecture is the converse of this statement.

\begin{conj}[The Jacobian conjecture] \label{conj:JC}
Every Keller mapping is a polynomial automorphism.
\end{conj}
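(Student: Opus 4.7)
The plan is to avoid attacking the Jacobian conjecture directly via algebraic geometry and instead translate it into a purely combinatorial/probabilistic statement about labelled rooted trees, which I will then attempt to settle by probabilistic methods. The starting point will be a classical reduction (to be re-derived here via multi-type branching processes, as flagged in the abstract) showing that, for each fixed degree~$d$, establishing the Jacobian conjecture for all cubic-homogeneous Keller maps is equivalent to a statement about weighted sums over labellings of planar $d$-ary trees (the so-called $d$-Catalan trees). Concretely, one produces a family of multi-type Galton--Watson trees whose law encodes the coefficients of the formal inverse $F^{-1}$ of a Keller map, reads off that the high-degree coefficients of $F^{-1}$ correspond to expectations of certain combinatorial functionals on these trees, and shows that the Jacobian condition $\det JF \in \mathbb{C}^{\times}$ forces a highly symmetric cancellation among labellings.

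The second step is to repackage this cancellation as the \emph{subtree shuffling} reformulation advertised in the abstract: two labellings of a $d$-Catalan tree are declared equivalent if one is obtained from the other by permuting certain matched subtrees, and the assertion becomes that, within each shuffle class, the signed weights sum to zero beyond a controlled degree. Having done this, I would introduce a Markov chain $(T_k)_{k\geq 0}$ on $d$-Catalan trees of size $N$ whose transitions perform local subtree shuffles; the design goal is that (i) the chain remains inside a single shuffle class, and (ii) its stationary distribution is uniform on that class. Existence of such a chain, combined with a coupling / concentration argument applied to the empirical measure along a trajectory, would force the required cancellation and hence yield the Jacobian conjecture.

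The main obstacle, which I expect to be genuinely hard and where I would spend most of the effort, is step (ii): constructing the Markov chain so that its stationary measure is provably uniform. Local subtree shuffle moves naturally preserve the shuffle class, so irreducibility and reversibility with respect to \emph{some} measure are not the issue; what is delicate is arranging the Metropolis-type weights so that the invariant measure is uniform while keeping the moves local enough that the mixing can be controlled. I would try to exploit the recursive self-similar structure of $d$-Catalan trees, together with the local limit description of large random planar trees around typical vertices (Kesten-type infinite trees), to reduce the verification of the uniform stationary distribution to a local computation at a single vertex.

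If that uniformity fails in the strict combinatorial sense, my fallback plan is the softer route already described in the abstract: verify the subtree shuffling conjecture \emph{asymptotically} using local limits of large random trees, and deduce from this an approximate version of the Jacobian conjecture stating that the high-degree coefficients of $F^{-1}$ are quantitatively small. In that scenario, the remaining gap toward the full Conjecture~\ref{conj:JC} would be to upgrade the asymptotic subtree-shuffling identity to an exact identity for every finite $N$, which I would attempt via a martingale argument along the Markov chain $(T_k)$ showing that any residual non-uniformity would contradict the exact polynomial identities forced by $\det JF \in \mathbb{C}^{\times}$.
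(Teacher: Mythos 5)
The statement you are being asked about is the Jacobian conjecture itself, which is open; the paper does not prove it, and your proposal does not either. The paper establishes only conditional implications (Theorems \ref{thm:relation} and \ref{thm:shufflechain}: if the subtree shuffling conjecture or the shuffle chain conjecture holds for some $d\geq 3$ and all $p$, then Conjecture \ref{conj:JC} holds) together with approximate quantitative results (Theorems \ref{thm:approx} and \ref{thm:twobounds}), not an unconditional proof of Conjecture~\ref{conj:JC}.

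There are two concrete gaps in your plan. First, you misstate the Markov chain's structure: you want the chain to ``remain inside a single shuffle class'' with uniform stationary measure on that class. That is trivially achievable and useless, since it says nothing about the constant function on all of $\mathcal{C}_k^{(d)}$. What Conjecture \ref{conj:shufflechain} actually demands is a chain on the \emph{whole} state space $\mathcal{C}_k^{(d)}$ that, at each step, selects a tree-dependent random length-$p$ ancestral path, reshuffles along it (so consecutive steps typically use different shuffle classes), and whose \emph{global} invariant measure is uniform on $\mathcal{C}_k^{(d)}$. The genuine difficulty is tuning the path-selection law $P_T$ so that this global stationary measure is uniform; this is precisely what the paper cannot do and leaves conjectural. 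Second, your final step --- ``upgrade the asymptotic subtree-shuffling identity to an exact identity \ldots\ via a martingale argument along the Markov chain'' --- is a placeholder, not an argument. The paper's Theorems \ref{thm:approx} and \ref{thm:twobounds} show the high-degree inverse coefficients of a Keller map are exponentially small in $k$; exponentially small does not entail zero, and no martingale, coupling, or concentration mechanism is exhibited anywhere (in your proposal or in the paper) that would force the residual to vanish exactly. Until such a mechanism exists, what you have written is a research program tracking the paper's, not a proof of Conjecture~\ref{conj:JC}.
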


There is a vast literature on the conjecture and its history.
We refer the reader to van den Essen's monograph \cite{VDEbook} for more information.

\subsection{The combinatorial approach to the Jacobian conjecture} \label{sec:combapproach}
While the Jacobian problem most commonly lies in the remit of algebraic geometry, over the years several authors have noted that the problem may be attacked using combinatorial arguments {\cite{AA, JP, singer1,wright05a, zeilberger, zhao, zhao2}}. The combinatorial formulation of the Jacobian conjecture was crystallized in recent work by the fourth and fifth authors \cite{JP}, and we now give a brief outline of this very concrete approach, which rests on the following three observations:

\begin{itemize}
\item We begin by appealing to the celebrated Bass--Connell--Wright reduction of the Jacobian conjecture \cite{BCW}: to prove the Jacobian conjecture it is sufficient to fix $d \geq 3$ and establish, for all $n$, the polynomial invertibility of every map of the form $F = {I} - H:\mathbb{C}^n \to \mathbb{C}^n$, where $H$ is $d$-homogeneous and the Jacobian matrix $JH$ is \textbf{nilpotent}, i.e.\ $(JH)^p = 0$ for some $p \leq n$.

For any such map we write
\begin{align} \label{eq:BCWform}
F_i(X_1,\ldots,X_n) := X_i - \sum_{|\alpha| = d } \frac{ H_{i,\alpha}}{\alpha!} X^\alpha
\end{align}
for the $i^{\text{th}}$ component of $F$, where the sum is taken over multi-indices $\alpha=(\alpha_1, \ldots, \alpha_n) \in \mathbb{Z}_{ \geq 0}^n$ with $|\alpha| := \alpha_1 + \dots + \alpha_n =d$, and we set $\alpha! := \alpha_1!\cdots \alpha_n!$ and $X^\alpha := X_1^{\alpha_1} \cdots X_n^{\alpha_n}$.
As such, we canonically associate $F$ (or $H$) with the set of complex coefficients $H = (H_{i,\alpha}\colon i \in [n], |\alpha| =d)$.
\item Whenever $F = {I} - H$ is a polynomial mapping, where $H$ is homogeneous degree $d$ (and $JH$ may or may not be nilpotent), $F$ has a power series inverse $F^{-1}$ whose $i^{\text{th}}$ component takes the form
\begin{align*}
F^{-1}_i(X_1,\ldots,X_n) = X_i + \sum_{k \geq 1} \sum_{ |\alpha| = (d-1)k+1  } g_{i,\alpha} X^\alpha.
\end{align*}
Moreover, for $|\alpha| = (d-1)k+1$, the complex coefficients $g_{i,\alpha}$ may be written in terms of the formula \cite[Theorem 2.3]{JP}
\begin{align} \label{eq:Gsum}
g_{i,\alpha} = \Phi_i^\alpha(H) := \frac{1}{(d!)^k} \sum_{T \in \mathcal{C}_k^{(d)} } \sum_{ \text{$(i,\alpha)$ labellings $\mathcal{T}$ of $T$}} \mathcal{E}_H(\mathcal{T}).
\end{align}
\begin{figure}
  \centering   
\scalebox{.8}{
 \begin{forest}
[1, fill=c1
[1,  fill=c1
[2, fill=c2]
[4,fill=c4]
[1,fill=c1]
]
[2, fill=c2
[2,fill=c2]
[3,  fill=c3
[3, fill=c3]
[1,  fill=c1]
[2, fill=c2]
]
[3,  fill=c3]
]
[4, fill=c4
]
]
\end{forest}
}  
\captionsetup{width=.85\linewidth}
\caption{{An $(i,\alpha)$ labelling $\mathcal{T}$ of a tree $T\in\mathcal{C}_k^{(d)}$.
In the figure, the degree is $d=3$, $k=4$, the dimension is $n=4$, the root type is $i=1$ and the leaf type is $\alpha=(2,3,2,2)$.
The $H$-weight of the depicted labelled tree $\mathcal{T}$ is $\mathcal{E}_H(\mathcal{T}) = H_{1,(1,1,0,1)}^2 H_{2,(0,1,2,0)} H_{3,(1,1,1,0)}$.}
The coefficient $g_{i,\alpha}$ for the inverse of $F = {I} - H$ with $H$ homogeneous degree $d$ may be computed as a sum over $(i,\alpha)$ labellings of $d$-Catalan trees; see \eqref{eq:Gsum}.}\label{fig:mbt1}
\end{figure}
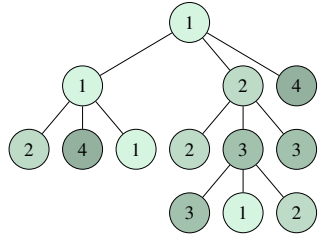
Here, $\mathcal{C}_k^{(d)}$ is the set of $d$-Catalan trees (i.e.\ planar, rooted, $d$-ary trees) with $(d-1)k+1$ leaves (i.e.\ with $k$ non-leaf vertices).
If $T = (V,E) \in\mathcal{C}_k^{(d)}$, then a labelling $\mathcal{T}$ of $T$ is a pair $\mathcal{T} = (T,\tau)$ where $\tau:V \to [n]$ is a function giving vertex $v$ \textbf{type} $\tau(v)$.
{For $i \in \mathbb{N}$ and $\alpha \in \mathbb{Z}^n_{\geq 0}$}  an $(i,\alpha)$ labelling of $T$ is any labelling of $T$ giving the root type $i$ and giving exactly $\alpha_\ell$ of the leaves type $\ell$, for all $\ell\in[n]$.
Finally, if for any non-leaf vertex $v$ we let $\mu(v)$ be the multi-index whose $j^{\text{th}}$ component counts the number of type $j$ children of $v$ (so that $|\mu(v)|=d$), then the \textbf{$H$-weight} $\mathcal{E}_H(\mathcal{T})$ of the labelling $\mathcal{T}$ of $T$ is given by 
\begin{align} \label{eq:blender}
\mathcal{E}_H(\mathcal{T}) := \prod_{v \in V \text{ non-leaf}} H_{\tau(v),\mu(v)},
\end{align}
where the product is taken over all non-leaf vertices in $V$.
See Figure~\ref{fig:mbt1} on page \pageref{fig:mbt1}.

\item The nilpotency of $JH$ imposes a condition on the coefficients of $H$, which we now describe.
We define a $d$-ary \textbf{fern} to be an element of $\mathcal{C}_n^{(d)}$ of height $n$, with a designated `sink' vertex in generation $n$, and the property that at most one vertex in each generation is not a leaf; see Figure \ref{fig:mbt2}.
{Fix an arbitrary $d$-ary fern $U$.}
It then turns out that the nilpotency of $JH$ is equivalent to the statement
\begin{align} \label{eq:nilp0}
\Psi_{i,j}^\alpha(H) = 0 \qquad \text{for every $i,j \in [n]$ and $\alpha \in \mathbb{Z}_{\geq 0}^n$ with $|\alpha| = (d-1)n$},
\end{align}
where
\begin{align} \label{eq:nilp}
\Psi_{i,j}^\alpha(H) := \sum_{ \text{$(i,j,\alpha)$ labellings $\mathcal{U}$ of $U$}} \mathcal{E}_H(\mathcal{U}).
\end{align}
Here, an $(i,j,\alpha)$ labelling of $U$ is any labelling of $U$ giving the root type $i$, the sink type $j$, and giving exactly $\alpha_\ell$ of the non-sink leaves type $\ell$, for all $\ell\in[n]$; see \cite[Lemma 2.8]{JP}.
{It can be easily seen that definition~\eqref{eq:nilp} does not depend on the choice of the $d$-ary fern $U$.}
\end{itemize}
\begin{figure}
  \centering   
\scalebox{.8}{
 \begin{forest}
[1, fill=c1
[4, fill=c4]
[1, fill=c1]
[1,  fill=c1
[2, fill=c2
[1, fill=c1
]
[2, fill=c2
[3, fill=c3]
[3, fill=c3]
[1, fill=c1]
]
[4, fill=c4]
]
[4,fill=c4]
[1,fill=c1]
]
]
\end{forest}
}  
\captionsetup{width=.85\linewidth}
\caption{An $(i,j,\alpha)$ labelling of a $3$-ary fern of height $n=4$, with root type $i=1$, sink type $j=3$ {and non-sink leave type $\alpha=(4,0,1,3)$}.
If $JH$ is nilpotent, then weighted sums over $(i,j,\alpha)$ labellings of a fixed $d$-ary fern of length $n$ are zero; see \eqref{eq:nilp0}-\eqref{eq:nilp}.
} \label{fig:mbt2}
\end{figure}
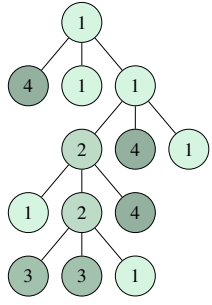
We prove \eqref{eq:Gsum} and the equivalence of \eqref{eq:nilp0} with the nilpotency of $JH$ in the sequel.

To recapitulate, if $H$ is degree $d$ homogeneous, then the coefficients of the inverse mapping for $F = {I} -H$ are given in \eqref{eq:Gsum} as a sum over weighted trees; see Figure \ref{fig:mbt1}.
If we assume that $JH$ is nilpotent, then the coefficients of $H$ must satisfy the nilpotency condition \eqref{eq:nilp0}-\eqref{eq:nilp} involving sums over labellings of a $d$-ary fern; see Figure \ref{fig:mbt2}.

Now, as mentioned above, the Jacobian conjecture is equivalent to the statement that every $F$ of the form $F= {I}-H$ with $H$ $d$-homogeneous and $JH$ nilpotent has the property that $F^{-1}$ is a polynomial mapping.
In other words, for such $F$, it must be the case that the combinatorial quantity $\Phi_i^\alpha(H)$ defined in \eqref{eq:Gsum} for the coefficients of $F^{-1}$ is zero for all but finitely many choices of $\alpha$.
Combining all the above observations, we arrive at the following combinatorial formulation of the Jacobian conjecture:

\begin{conj}[Jacobian conjecture -- combinatorial version] \label{conj:JC2}
There exists $d \geq 3$ such that the following is true for all $n \geq 1$.
Suppose $H = (H_{i,\alpha}\colon i \in [n], |\alpha| = d)$ is a collection of complex numbers satisfying $\Psi_{i,j}^\alpha(H) = 0$ for all $i,j \in [n]$ and $\alpha \in \mathbb{Z}_{\geq 0}^n$ with $|\alpha| = (d-1)n$.
Then, for each $i\in [n]$, $\Phi_i^\alpha(H) = 0$ for all but finitely many $\alpha$.
\end{conj}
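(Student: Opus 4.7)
The strategy is to view $\Phi_i^\alpha(H)$ as a weighted enumeration over $(i,\alpha)$-labelled $d$-Catalan trees and to use the nilpotency constraint \eqref{eq:nilp0}--\eqref{eq:nilp} to partition the labellings into \emph{shuffle classes} whose total $H$-weight vanishes; choosing $k = k(n)$ large enough should then force $\Phi_i^\alpha(H) = 0$ for all but finitely many $\alpha$.

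Concretely, the first step is to exploit the multiplicativity $\mathcal{E}_H(\mathcal{T}) = \prod_{v \text{ non-leaf}} H_{\tau(v),\mu(v)}$ of \eqref{eq:blender} over the vertex set: if we locate inside $\mathcal{T}$ a sub-labelling of the form of an $(i',j',\alpha')$-labelling of a fixed $d$-ary fern $U$ of height $n$, then $\mathcal{E}_H(\mathcal{T})$ factors as the product of the fern weight and the complementary weight, and summing only over the fern portion produces a factor of $\Psi_{i',j'}^{\alpha'}(H)$, which is zero by hypothesis. The second step is to use the assumption that $k$ is large compared to $n$ to guarantee that every $(i,\alpha)$-labelling of every $T \in \mathcal{C}_k^{(d)}$ admits at least one such embedded fern, say by a pigeonhole argument on root-to-leaf paths combined with the recursive structure of $\mathcal{C}_k^{(d)}$. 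The third step is to group labellings into shuffle classes by fixing everything except the labelling of a canonically chosen fern, and to verify that every $(i,\alpha)$-labelling is covered by exactly one class (or by the same finite number of classes), so that summing class totals collapses $\Phi_i^\alpha(H)$ to zero.

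The main obstacle is the third step. Local cancellation along an individual fern comes essentially for free from nilpotency, but assembling local shuffles into a clean global partition is delicate: a typical $T$ contains many candidate embedded ferns, and different choices produce overlapping classes that can obstruct a direct partition, while insisting on a canonical choice risks making the class sizes non-uniform. The natural formulation is to build a Markov chain on $(i,\alpha)$-labelled trees whose elementary moves are fern-shuffles, and whose stationary distribution is uniform; existence of such a chain would immediately yield the conjecture by detailed balance, whereas in the absence of such a construction one might hope only to prove the statement in an asymptotic sense through the local limit theory of large random $d$-Catalan trees, in which ferns of bounded height become abundant and nearly uniformly distributed.
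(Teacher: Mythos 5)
The statement you are attempting to prove is labelled a \emph{conjecture}, not a theorem, and the paper does not prove it: Conjecture~\ref{conj:JC2} is the combinatorial reformulation of the Jacobian conjecture, and the paper explicitly stresses that the two conjectures are equivalent. Both are open (even $n=2$ of the original is unsolved), so no complete proof exists in the paper, nor could one short of resolving the Jacobian conjecture itself. What the paper does prove, in Sections~\ref{sec:inversion} and~\ref{sec:Keller}, is the chain of equivalences justifying the \emph{formulation}: the Bass--Connell--Wright reduction to $F=I-H$ with $H$ $d$-homogeneous and $JH$ nilpotent (Theorem~\ref{thm:BassConnellWright}); the tree inversion formula~\eqref{eq:Gsum} identifying the inverse coefficients with $\Phi_i^\alpha(H)$ (Theorem~\ref{thm:inversion2} and the remark following it); and the characterisation of nilpotency of $JH$ as the vanishing of the fern-weight sums $\Psi_{i,j}^\alpha(H)$ (Corollary~\ref{coro:nilp2}). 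These show Conjecture~\ref{conj:JC2} is equivalent to Keller's conjecture; they do not establish it.

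Your proposal is therefore an attempt on an open problem, and you correctly identify where it stalls. The machinery you sketch rediscovers much of Sections~\ref{sec:conjectures}--\ref{sec:infinity}: factoring out the fern weight via multiplicativity and grouping labellings by fern-shuffles is Definition~\ref{df:shuffle2} and Lemma~\ref{lem:nilp2}; passing to unlabelled shuffle classes is Lemma~\ref{lem:nilp}; phrasing the desired global cancellation as membership of the constant function $\mathbf{1}$ in the span of shuffle indicators is Conjecture~\ref{conj:shuffle}; and your ``Markov chain on trees whose moves are fern-shuffles with uniform stationary distribution'' is precisely Conjecture~\ref{conj:shufflechain}. But the step you flag as your ``main obstacle''---assembling local fern-cancellations into a global partition, equivalently constructing a shuffle chain that is provably uniform---is exactly the open content, and the paper does not close it either: it proves only the conditional implications (Theorems~\ref{thm:relation},~\ref{thm:shufflechain}) and asymptotic approximations (Theorems~\ref{thm:perfect},~\ref{thm:approx},~\ref{thm:twobounds}). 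So while your intuition faithfully tracks the paper's programme, the proposal does not constitute a proof of the statement, and no such proof is in the paper; the conjecture is derived as equivalent, then attacked, not resolved.
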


We stress that Conjecture \ref{conj:JC2} is equivalent to the Jacobian conjecture, Conjecture \ref{conj:JC}.

In this article we give a rapid new derivation of the combinatorial formulation of the Jacobian conjecture \cite{JP} using a probabilistic approach centered around multi-type Galton--Watson trees. 
We then substantially reduce the complicated conjecture in \cite{JP} involving labelled trees to a new and far simpler conjecture involving unlabelled trees. We show it is also possible to reformulate the latter in terms of stationary distributions of Markov chains on trees that update by shuffling certain subtrees. Finally, we apply the local limit theory of large random trees to attack this simpler conjecture, showing it is true in a certain asymptotic sense, and relating this approximate form back to the original Jacobian conjecture. In short, probabilistic machinery has the potential to tackle the Jacobian conjecture, and thus the \emph{raison d'etre} of the present article is to attract probabilists to the problem.
 
In Section \ref{subsec:results} we give an outline of our main results, with the understanding that full details surrounding the definitions will be given in the sequel.

\subsection{Main results} \label{subsec:results}

The basic object we study are \textbf{$d$-Catalan trees}: these are rooted, planar, $d$-ary trees.
Here and throughout, by \textbf{$d$-ary tree} we mean a tree in which each vertex has either $0$ or $d$ children.
We write 
\begin{align*}
\mathcal{C}_k^{(d)} := \{ \text{$d$-Catalan trees with $k$ internal vertices} \}.
\end{align*}
Take an ancestral path $(v_0,\ldots,v_p)$ of length $p$ in a tree $T \in \mathcal{C}_k^{(d)}$, and consider the subtrees subtended by the $(d-1)p$ siblings of $v_1,\ldots,v_p$ (some of which may be leaves). 
The \textbf{shuffle class} associated with this ancestral path is the subset of $\mathcal{C}_k^{(d)}$ consisting of trees $T'\in \mathcal{C}_k^{(d)}$ that can be obtained from $T$ by rearranging these $(d-1)p$ trees. We call any such subset a length-$p$ shuffle class. A length-$p$ shuffle class in $\mathcal{C}_k^{(d)}$ may contain up to $((d-1)p)!$ different trees $T'$ of $\mathcal{C}_k^{(d)}$, but will contain fewer than this number if some of the subtended subtrees are identical. See Section \ref{subsec:shuffles} for more precise definitions.


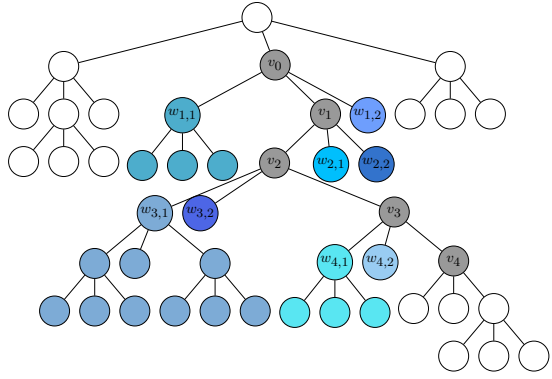
\begin{figure}
  \centering   
\scalebox{.6}{
 \begin{forest}
[
[
[]
[[][][]]
[]
]
[$v_0$, fill=d1
[$w_{1,1}$, fill=d2
[,fill=d2]
[,fill=d2]
[,fill=d2]
]
[$v_1$, fill=d1
[$v_2$, fill=d1
[$w_{3,1}$, fill=d5
[, fill=d5[, fill=d5][, fill=d5][, fill=d5]][, fill=d5][, fill=d5[, fill=d5][, fill=d5][, fill=d5]]]
[$w_{3,2}$, fill=d6]
[$v_3$,fill=d1
[$w_{4,1}$, fill=d7
[, fill=d7][, fill=d7][, fill=d7]]
[$w_{4,2}$, fill=d8]
[$v_4$,fill=d1
[][][[][][]]]
]
]
[$w_{2,1}$, fill=d3a]
[$w_{2,2}$, fill=d4]
]
[$w_{1,2}$, fill=d3]
]
[
[][][]]
]
\end{forest}
}  
\captionsetup{width=.85\linewidth}
\caption{A $d$-Catalan tree with $d=3$ and $k=15$ internal vertices.
An ancestral path $(v_0,\dots,v_4)$ of length $p=4$ is highlighted in grey.
Each vertex $v_i$, $i=1,\dots,p$, has siblings $w_{i,j}$, $j=1,\dots,d-1$.
Thus, there are $(d-1)p=8$ subtrees coming off the path, each of which is subtended by one of the $w_{i,j}$; these subtrees are highlighted in various shades of blue.
Five of these trees are leaves; two of them (those subtended by $w_{1,1}$ and $w_{4,1}$) are the unique $3$-Catalan tree with one internal vertex; finally, the one subtended by $w_{3,1}$ is a $3$-Catalan tree with three internal vertices.
The associated shuffle class consists of the set of $8!/(5!2!1!)$ trees that may be obtained by rearranging the positions of the subtrees subtended by the $w_{i,j}$ whilst keeping the positions of the white and grey vertices fixed.
The notation of the $v_i$ and $w_{i,j}$ corresponds to that of Section \ref{subsec:shuffles}.}\label{fig:shuffle}
\end{figure}

We make the following conjecture about the vector space of functions on $\mathcal{C}_k^{(d)}$.

\begin{conj}[{Subtree} shuffling conjecture] \label{conj:shuffle}
Fix $d \geq 2$ and $p \geq 1$.
Then, there exists an integer $k_0(d,p)$ such that, for all $k \geq k_0(d,p)$, the constant function $\mathbf{1}$ on  $\mathcal{C}_n^{(d)}$ lies in
\[\mathrm{span} \{ \mathbf{1}_\mathcal{S} \colon \mathcal{S}\text{ is a length-$p$ shuffle class in $\mathcal{C}_k^{(d)}$} \},
\]
i.e.\ in the span of the indicator functions of the length-$p$ shuffle classes. 
\end{conj}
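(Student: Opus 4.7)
The plan is to recast the conjecture as a linear algebra question and then attack it via duality. Let $A$ be the $0/1$-matrix with rows indexed by trees in $\mathcal{C}_k^{(d)}$ and columns by the length-$p$ shuffle classes, with entry $A_{T,\mathcal{S}}=\mathbf{1}_{T\in\mathcal{S}}$. The conjecture is the statement $\mathbf{1}\in\mathrm{Col}(A)$. By the fundamental theorem of linear algebra, this is equivalent to the implication
\[
\Bigl(\,\forall\,\mathcal{S}\colon\sum_{T\in\mathcal{S}}v(T)=0\,\Bigr)\;\Longrightarrow\;\sum_{T\in\mathcal{C}_k^{(d)}}v(T)=0,
\]
so I would aim to prove the stronger statement that any such $v$ vanishes identically, provided $k\ge k_0(d,p)$.

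My first step would be a \emph{connectivity} argument. Define the graph $G_{d,p,k}$ on $\mathcal{C}_k^{(d)}$ with $T\sim T'$ whenever some length-$p$ shuffle class contains both. I would show that for $k$ large in terms of $d,p$, any two $d$-Catalan trees are joined by a short path in $G_{d,p,k}$: given $T\neq T'$, locate a long ancestral path common to both, and use that any mismatch between their attached subtrees can be corrected by a sequence of length-$p$ shuffles that reroute subtrees through a sufficiently ``bushy'' region (such a region must exist when $k\gg (d-1)p$). This should produce a concrete rerouting scheme, analogous in spirit to bubble-sort on permutations.

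Given connectivity, the core of the argument is to combine shuffle relations to deduce $v(T)=v(T')$ for neighbouring trees. The key trick I would try is \emph{pairwise cancellation}: if $\mathcal{S},\mathcal{S}'$ are two length-$p$ shuffle classes whose symmetric difference contains exactly two trees $T_1,T_2$, then $\sum_{\mathcal{S}}v=\sum_{\mathcal{S}'}v=0$ forces a telescoping identity relating $v(T_1),v(T_2)$ to a common quantity over $\mathcal{S}\cap\mathcal{S}'$. By constructing enough such overlapping pairs along the $G_{d,p,k}$-paths above, I would hope to propagate $v(T)=v(T')$ throughout and conclude $v$ is constant; a single shuffle class with $|\mathcal{S}|\ge 1$ then forces that constant to be zero. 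As a backup route, I would try an explicit ansatz $c_\mathcal{S}=f(|\mathcal{S}|,\text{frame type})$, where the frame type records the skeleton of the path and the shape of the subtree beneath $v_p$, and look for combinatorial identities (multinomial in character) making $\sum_{\mathcal{S}\ni T}c_\mathcal{S}=1$.

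The hard part will be step three: even granting connectivity of $G_{d,p,k}$, converting the global relations $A^{T}v=0$ into the pointwise equality $v(T)=v(T')$ requires finding enough \emph{pairs} of shuffle classes with controlled symmetric difference, and the overlap structure of shuffle classes is intricate because two classes can share trees coming from completely different ancestral paths. Equivalently, the obstacle is to rule out the existence of a nontrivial $v$ orthogonal to every $\mathbf{1}_\mathcal{S}$ whose total mass is nonzero; the previous asymptotic arguments of the paper handle this only up to an $o(1)$ error, and closing that gap is precisely the combinatorial heart of the conjecture.
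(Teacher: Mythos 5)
This statement is Conjecture~\ref{conj:shuffle}, which the paper leaves \emph{open}: the authors prove only that it implies the Jacobian conjecture (Theorem~\ref{thm:relation}) and an approximate version of it (Theorem~\ref{thm:approx}). Your text is accordingly a research plan rather than a proof, and its decisive step does not go through as described. The duality reduction itself is correct: $\mathbf{1}\in\mathrm{span}\{\mathbf{1}_\mathcal{S}\}$ if and only if every $v$ orthogonal to all length-$p$ shuffle indicators is also orthogonal to $\mathbf{1}$. But you then aim at the strictly stronger assertion that every such $v$ vanishes identically, i.e.\ that the shuffle indicators span all of $(\mathcal{C}_k^{(d)})^*$. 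That is exactly Singer's Conjecture~\ref{conj:strongsinger}, which the paper emphasizes is substantially stronger than Conjecture~\ref{conj:shuffle} and is known only for $d=2$ and $p\le 3$. So the proposed route replaces the target by a harder open problem.

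The mechanism you offer for that harder problem is also incomplete in an essential way. Connectivity of the graph $G_{d,p,k}$ (which the paper does assert for large $k$ in its discussion of shuffle chains) gives nothing by itself: the hypothesis on $v$ is that its \emph{sums} over shuffle classes vanish, not that $v$ is constant on classes, so one cannot propagate $v(T)=v(T')$ along edges without further relations. The pairwise-cancellation device requires pairs of length-$p$ shuffle classes whose symmetric difference is exactly two trees; you give no construction of such pairs, and it is far from clear that they exist in useful quantity, since classes based at different ancestral paths generically have different cardinalities and intricate overlaps, while classes based at the same path coincide. You concede in your final paragraph that closing this gap is "precisely the combinatorial heart of the conjecture" --- which is an accurate self-assessment: what you have is a programme whose crucial step is the open problem itself, not a proof.
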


Let us interpret this conjecture.
Spelling out the statement, we are saying that to each length-$p$ shuffle class $\mathcal{S} \subseteq \mathcal{C}_k^{(d)}$ we may associate a real number $\lambda_\mathcal{S}$ in such a way that for every tree $T \in \mathcal{C}_k^{(d)}$ we have
\begin{align} \label{eq:preeq}
\sum_{ \mathcal{S} \text{ length-$p$ shuffle class}} \lambda_\mathcal{S} \mathbf{1}_S(T) = 1.
\end{align}
We can think of $\lambda_\mathcal{S}$ as a `score' given to each tree $T$ in $\mathcal{C}_k^{(d)}$ for membership in $\mathcal{S}$.
Conjecture~\ref{conj:shuffle} then states that, when the number of vertices is large, the shuffle classes form a sufficiently rich collection of subsets of $\mathcal{C}_k^{(d)}$ so that there exists an allocation of scores to shuffle classes for which each tree gets the same total score.

The reason for our interest in Conjecture \ref{conj:shuffle} is the following theorem, which we will prove in Section \ref{subsec:orthogonality}:

\begin{thm}[Subtree shuffling conjecture implies Jacobian conjecture] \label{thm:relation}
If, for some $d \geq 3$, Conjecture \ref{conj:shuffle} is true for all $p \in \mathbb{N}$, then so is the Jacobian conjecture.
\end{thm}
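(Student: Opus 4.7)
I would derive the Jacobian conjecture via its combinatorial reformulation, Conjecture~\ref{conj:JC2}. Fix a $d \geq 3$ for which Conjecture~\ref{conj:shuffle} holds for all $p$, an arbitrary $n \geq 1$, and a family $H$ satisfying the nilpotency identity $\Psi^\alpha_{i,j}(H) = 0$ for every $i, j \in [n]$ and $|\alpha| = (d-1)n$. As sketched in the text, this identity is equivalent to $(JH)^n = 0$ as a matrix of polynomials, and hence $(JH)^p = 0$ for every $p \geq n$. The target is to show that $\Phi_i^\alpha(H) = 0$ for all but finitely many $\alpha$.

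First, I would apply Conjecture~\ref{conj:shuffle} with $p := n$: for $k \geq k_0(d, n)$ there exist scalars $\lambda_{\mathcal{S}}$, indexed by length-$n$ shuffle classes in $\mathcal{C}_k^{(d)}$, such that $\mathbf{1} = \sum_{\mathcal{S}} \lambda_{\mathcal{S}} \mathbf{1}_{\mathcal{S}}$. Writing $W_i^\alpha(T) := \sum_{\mathcal{T}} \mathcal{E}_H(\mathcal{T})$ for the sum over $(i,\alpha)$-labellings of $T$, identity~\eqref{eq:Gsum} rearranges to
\[
\Phi_i^\alpha(H) \;=\; \frac{1}{(d!)^k} \sum_{\mathcal{S}} \lambda_{\mathcal{S}} \sum_{T \in \mathcal{S}} W_i^\alpha(T),
\]
so it suffices to show that $\sum_{T \in \mathcal{S}} W_i^\alpha(T) = 0$ for every length-$n$ shuffle class $\mathcal{S}$.

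Next I would unpack such a class $\mathcal{S}$, which is specified by an ancestral path $v_0, \ldots, v_n$ together with an unordered multi-set of $(d-1)n$ sibling subtrees and the subtree rooted at $v_n$. Any labelling of $T' \in \mathcal{S}$ decomposes as: path types $i = i_0, i_1, \ldots, i_n$; labellings $\mathcal{T}_1, \ldots, \mathcal{T}_{(d-1)n}$ of the sibling subtrees with root types $\rho_1, \ldots, \rho_{(d-1)n}$; a labelling $\mathcal{U}$ of the subtree at $v_n$ (with $v_n$ receiving type $i_n$); and a planar arrangement $\sigma$ placing these labelled siblings into the $(d-1)n$ positions. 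Because $H_{i, \mu}$ depends on $\mu$ only as a multi-set, the weight $\mathcal{E}_H$ depends on $\sigma$ only through the level-wise multi-set of sibling root types. A direct multinomial count of arrangements then yields the key identity
\[
\sum_{i_1, \ldots, i_{n-1}} \sum_{\sigma} \prod_{l=0}^{n-1} H_{i_l, \, e_{i_{l+1}} + \sum_{k=1}^{d-1} e_{\rho_{\sigma(l,k)}}} \;=\; ((d-1)!)^n \, \lambda! \, [X^\lambda] \bigl((JH)^n\bigr)_{i, i_n},
\]
where $\lambda := \sum_j e_{\rho_j}$ has $|\lambda| = (d-1)n$. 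The hypothesis $(JH)^n = 0$ kills the right-hand side, and summing over all $\mathcal{T}_j, \mathcal{U}$ consistent with the leaf profile $\alpha$ yields $\sum_{T' \in \mathcal{S}} W_i^\alpha(T') = 0$, establishing Conjecture~\ref{conj:JC2}.

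The principal obstacle is the combinatorial bookkeeping behind that display: one must pass carefully between summing over $T' \in \mathcal{S}$, whose trees can have nontrivial stabilizers when sibling subtrees repeat, and summing over the full group $S_{(d-1)n}$ of arrangements, correctly normalizing by these stabilizers---though for the purpose of showing the sum equals zero, any constant stabilizer factor is harmless. Once this alignment is in place, the arrangement-sum matches, term by term, the matrix-product expansion of $\bigl((JH)^n\bigr)_{i, i_n}$ whose vanishing is precisely the fern identity~\eqref{eq:nilp0}; the rest is purely algebraic, with the shuffle conjecture supplying the remaining combinatorial input.
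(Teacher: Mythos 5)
Your overall strategy matches the paper's proof of Theorem~\ref{thm:relation}: apply Conjecture~\ref{conj:shuffle} with $p := n$ to write $\mathbf{1}$ as a linear combination $\sum_{\mathcal{S}} \lambda_{\mathcal{S}} \mathbf{1}_{\mathcal{S}}$ of length-$n$ shuffle indicators, pair against the average $H$-weight function $E_{i,\alpha,H}$ via~\eqref{eq:Gsum3}, reduce the claim to the vanishing of each shuffle-class sum $\sum_{T\in\mathcal{S}} E_{i,\alpha,H}(T)$ under $(JH)^n=0$, and finish through Bass--Connell--Wright (equivalently Conjecture~\ref{conj:JC2}). Where you diverge is in how that vanishing is established. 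The paper simply invokes its Shuffle Lemma (Lemma~\ref{lem:nilp}, recorded as~\eqref{eq:kalpa0}), which was proved in Section~\ref{sec:Keller} in two steps: Lemma~\ref{lem:nilp2} first proves the analogue for \emph{labelled} shuffle classes---which in addition to permuting the labelled sibling subtrees also relabel the path types $i_1,\ldots,i_{p-1}$---by reduction to the algebraic nilpotency equations of Lemma~\ref{lem:kelp}, and Lemma~\ref{lem:nilp} then descends to the unlabelled version by dividing through by a multiplicity $N(\mathcal{T}')$ that is shown constant on each labelled shuffle class. You instead re-derive the vanishing inline via a one-step decomposition: sum over the full group $S_{(d-1)n}$ of arrangements $\sigma$ together with per-index labellings of the sibling subtrees, and identify the inner sum over $(\sigma,i_1,\ldots,i_{n-1})$ as $((d-1)!)^n\,\lambda!\,[X^\lambda]\bigl((JH)^n\bigr)_{i,i_n}$. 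That multinomial identity is correct: the factor $\lambda!/\prod_\ell\beta^\ell!$ counts partitions of the $(d-1)n$ distinguishable indices into levels achieving root-type multi-sets $\beta^\ell$, and $((d-1)!)^n$ accounts for planar orderings within each level; and the stabilizer $\prod_W m_W!$ (where $m_W$ is the multiplicity of each distinct \emph{unlabelled} sibling subtree $W$), by which $\sigma$ over-counts each $T'\in\mathcal{S}$, depends only on $\mathcal{S}$ and so factors out, exactly as you assert. The paper's detour through the labelled class makes the normalization transparent; your direct count is arguably shorter once the multinomial identity is in hand. One minor gap: your decomposition omits the portion of $T'$ strictly above $v_0$ and its labelling (an ancestral path need not begin at the root), but that piece is a $\sigma$-independent spectator factor and does not affect the conclusion.
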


It is possible to adapt Conjecture \ref{conj:shuffle} and the subsequent Theorem \ref{thm:relation} to a problem involving tree-valued Markov chains, as we now outline.
A \textbf{$p$-shuffle chain} is a Markov chain $(T_j)_{j \geq 0}$ taking values in $\mathcal{C}_k^{(d)}$ that updates its value from $T_j$ to $T_{j+1}$ by choosing a length-$p$ ancestral path within $T_j$ according to some random $T_j$-dependent rule, and then creating $T_{j+1}$ from $T_j$ by choosing uniformly a rearrangement of the subtrees coming off this ancestral path.
See Section \ref{subsec:markov} for the precise definition.

\begin{conj}[Shuffle chain conjecture] \label{conj:shufflechain}
Fix $d \geq 2$ and $p \geq 1$.
Then, there exists an integer $k_0(d,p)$ such that, for all $k \geq k_0(d,p)$, there exists a $p$-shuffle chain whose stationary distribution is the uniform distribution on $\mathcal{C}_k^{(d)}$.
\end{conj}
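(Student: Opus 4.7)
The plan is to construct the $p$-shuffle chain as a Markov chain that is reversible with respect to the uniform measure on $\mathcal{C}_k^{(d)}$. I would parametrize candidate chains by a function $q$ on the set of length-$p$ shuffle classes: from a tree $T$, select a length-$p$ ancestral path $\pi$ with probability $q(\mathcal{S})$, where $\mathcal{S}$ is the shuffle class of $T$ associated with $\pi$, and then pick $T'$ uniformly from $\mathcal{S}$. Since the shuffle class of $T$ with respect to $\pi$ coincides with that of any other $T' \in \mathcal{S}$ with respect to the same path $\pi$, the resulting kernel $K$ is symmetric, hence automatically reversible with respect to the uniform distribution. For $K$ to be a well-defined transition kernel one needs $q(\mathcal{S}) \geq 0$ for every length-$p$ shuffle class $\mathcal{S}$, together with the normalization
\[
\sum_{\mathcal{S} \ni T} q(\mathcal{S}) \;=\; 1 \qquad \text{for every } T \in \mathcal{C}_k^{(d)}.
\]
In this way Conjecture~\ref{conj:shufflechain} reduces to exhibiting, for $k$ large, a \emph{non-negative} solution to this linear system.

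Next I would apply Farkas' lemma to recast feasibility as a separating-hyperplane condition: such $q \geq 0$ exists if and only if, for every function $y \colon \mathcal{C}_k^{(d)} \to \mathbb{R}$ satisfying $\sum_{T \in \mathcal{S}} y(T) \geq 0$ for every length-$p$ shuffle class $\mathcal{S}$, one also has $\sum_{T} y(T) \geq 0$. Equivalently, the constant function $\mathbf{1}$ must lie in the non-negative conical hull of the indicator functions $\mathbf{1}_{\mathcal{S}}$. This is strictly stronger than Conjecture~\ref{conj:shuffle}, which only asks that $\mathbf{1}$ lie in the \emph{linear} span of the $\mathbf{1}_{\mathcal{S}}$.

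Finally I would attempt to establish this Farkas condition for large $k$ along two complementary lines. The first is a local-limit route, analogous to the one employed by the authors for Conjecture~\ref{conj:shuffle}: for $k$ large a uniformly random $d$-Catalan tree converges locally to an infinite random object, and around a typical vertex the collection of shuffle classes is sufficiently rich and symmetric that an averaging argument should yield a non-negative combination, at least on a large subset of $\mathcal{C}_k^{(d)}$ to which boundary corrections can then be added. The second is a direct combinatorial construction, testing simple candidates such as $q(\mathcal{S}) \propto 1/|\mathcal{S}|$ or weights determined by path shape, and verifying the row-sum condition by exploiting the natural action of symmetric groups on siblings of on-path vertices.

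The main obstacle, and the step I expect to require genuinely new ideas, is the passage from linear span to non-negative conic hull. Signed coefficients produced by Conjecture~\ref{conj:shuffle} cannot be converted to positive ones by any obvious local manipulation, while a direct construction must simultaneously balance, for every tree $T$, the contributions from all shuffle classes containing $T$: the number of such constraints grows with $k$, and it is precisely this convex-geometric rigidity that separates Conjecture~\ref{conj:shufflechain} from its linear predecessor.
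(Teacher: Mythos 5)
The statement you were given is Conjecture~\ref{conj:shufflechain}, which the paper poses but does \emph{not} prove; there is no proof of it in the paper to compare against, and your sketch correctly (and honestly) stops short of claiming one. What the paper does prove about this statement is Theorem~\ref{thm:shufflechain} (it implies the Jacobian conjecture), whose proof shows that a $p$-shuffle chain with uniform stationary law yields \emph{non-negative} coefficients $\lambda_{\mathcal{S}}$ with $\sum_{\mathcal{S}} \lambda_{\mathcal{S}}\mathbf{1}_{\mathcal{S}} = \mathbf{1}$. Your Farkas-type reformulation, that the conjecture amounts to placing $\mathbf{1}$ in the non-negative conic hull of the $\mathbf{1}_{\mathcal{S}}$ (subject to a normalization) rather than merely in their linear span, is therefore exactly the point the authors record in the remark closing Section~\ref{subsec:markov}: Conjecture~\ref{conj:shufflechain} is strictly stronger than Conjecture~\ref{conj:shuffle} and corresponds to the case of all $\lambda_{\mathcal{S}}$ non-negative. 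Likewise, the symmetrization device of taking $P_T(v) = q(\mathrm{Sh}_{T,p}(v))$ to force reversibility w.r.t.\ the uniform measure is a legitimate (if strictly more restrictive) ansatz within the paper's Definition of a $p$-shuffle chain.

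Two technical wrinkles in your writeup are worth flagging, although they are not the crux. First, your normalization should be $\sum_{v} q\bigl(\mathrm{Sh}_{T,p}(v)\bigr)=1$, a sum over vertices of height $\geq p$, not $\sum_{\mathcal{S}\ni T} q(\mathcal{S})=1$, a sum over distinct shuffle classes: the same shuffle class can arise from several vertices of the same tree (e.g.\ multiple perfect paths), and the paper's $P_T$ is a measure on vertices. Second, the definition of a $p$-shuffle chain requires $P_T(v)>0$ for \emph{every} $v$ of height $\geq p$, so a non-negative $q$ with some zeros does not immediately produce an admissible chain; one would need to argue that a strictly positive solution exists, or relax the support requirement (which would also require revisiting irreducibility). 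As you say yourself, the genuine difficulty --- passing from signed to non-negative coefficients --- is where new ideas are needed, and on that the paper offers no route either; both the local-limit and direct-construction avenues you sketch remain open.
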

We will show the following.
\begin{thm}[Shuffle chain conjecture implies Jacobian conjecture] \label{thm:shufflechain}
If, for some $d\geq 3$, Conjecture \ref{conj:shufflechain} is true for all $p \geq 1$, then so is the Jacobian conjecture.
\end{thm}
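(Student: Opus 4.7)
My plan is to deduce this theorem from Theorem \ref{thm:relation} by showing that the shuffle chain conjecture implies the subtree shuffling conjecture (Conjecture \ref{conj:shuffle}) with the same parameters $d$ and $p$. The key observation is that the transition kernel of any $p$-shuffle chain is a convex mixture of stochastic operators, each of which acts as the uniform distribution on a single length-$p$ shuffle class; stationarity of the uniform measure will then translate, by a short dual sum-swap, into the required linear dependence among indicators $\mathbf{1}_{\mathcal{S}}$.

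First I would unpack the definition of a $p$-shuffle chain (Section \ref{subsec:markov}): starting from $T \in \mathcal{C}_k^{(d)}$, the chain picks an ancestral path $\pi$ of length $p$ in $T$ with some probability $p_T(\pi)$, and then picks the next state uniformly from the associated shuffle class $\mathcal{S}(T,\pi)$. Grouping paths by the shuffle class they produce, set
\begin{equation*}
q(T,\mathcal{S}) \;:=\; \sum_{\pi \,:\, \mathcal{S}(T,\pi)=\mathcal{S}} p_T(\pi),
\end{equation*}
which vanishes unless $T \in \mathcal{S}$. The transition probabilities then become
\begin{equation*}
P(T,T') \;=\; \sum_{\mathcal{S}} q(T,\mathcal{S})\,\frac{\mathbf{1}_{\mathcal{S}}(T')}{|\mathcal{S}|},
\end{equation*}
displaying $P$ as a row-dependent convex combination of the doubly-stochastic uniform kernels $U_{\mathcal{S}}(T,T') = |\mathcal{S}|^{-1} \mathbf{1}_{\mathcal{S}}(T)\mathbf{1}_{\mathcal{S}}(T')$.

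Next I would exploit stationarity. The uniform measure $\mu$ on $\mathcal{C}_k^{(d)}$ being invariant for $P$ is equivalent to $\sum_{T} P(T,T')=1$ for every $T'$. Swapping the order of summation in this identity yields
\begin{equation*}
1 \;=\; \sum_{T} P(T,T') \;=\; \sum_{\mathcal{S}} \lambda_{\mathcal{S}}\, \mathbf{1}_{\mathcal{S}}(T'), \qquad \text{where } \lambda_{\mathcal{S}} \;:=\; \frac{1}{|\mathcal{S}|}\sum_{T \in \mathcal{S}} q(T,\mathcal{S}),
\end{equation*}
valid for every $T' \in \mathcal{C}_k^{(d)}$. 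This is exactly the relation \eqref{eq:preeq} witnessing Conjecture \ref{conj:shuffle} at the given $(d,p,k)$. Running this argument for every $p \geq 1$ and every $k \geq k_0(d,p)$ gives Conjecture \ref{conj:shuffle} in full, whence Theorem \ref{thm:relation} delivers the Jacobian conjecture.

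The argument is essentially a bookkeeping reformulation of the stationarity identity, so I do not anticipate a genuine analytic obstacle; the main thing to verify carefully is that the parametrization by shuffle classes is well-posed, since a single class $\mathcal{S}$ can be reached from many distinct pairs $(T,\pi)$ with $T \in \mathcal{S}$. Aggregating via $q(T,\mathcal{S})$ and then summing over $T \in \mathcal{S}$ handles this cleanly, and the resulting coefficients $\lambda_{\mathcal{S}}$ are manifestly non-negative reals --- stronger than Conjecture \ref{conj:shuffle} requires, but certainly sufficient.
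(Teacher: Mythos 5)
Your argument is correct and essentially identical to the paper's: both proofs write out the stationarity of the uniform measure as $\sum_T P(T,T')=1$, expand $P$ using the shuffle-chain transition rule, swap the order of summation, and read off coefficients $\lambda_{\mathcal{S}}$ (your $\frac{1}{|\mathcal{S}|}\sum_{T\in\mathcal{S}}q(T,\mathcal{S})$ is exactly the paper's $\sum_{T}\sum_{v}\frac{P_T(v)}{\#\mathrm{Sh}_{T,p}(v)}\mathbf{1}\{\mathrm{Sh}_{T,p}(v)=\mathcal{S}\}$ after restricting the supports), yielding Conjecture \ref{conj:shuffle} and hence the Jacobian conjecture via Theorem \ref{thm:relation}. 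The intermediate step of aggregating paths into $q(T,\mathcal{S})$ is a harmless bookkeeping refinement, not a different route.
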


We now turn to outlining some of our unconditional results.
In this direction, we say a $d$-Catalan tree $T$ is \textbf{$p$-perfect} if it contains a \textbf{$p$-perfect path}, i.e. an ancestral path such that each sibling off the path is a leaf.
See Figure \ref{fig:preperfect}.

\begin{figure}
  \centering   
\scalebox{.6}{
 \begin{forest}
[
[][[[[][][]][][[][][]]][][]][,fill=d1[,fill=d1[,fill=d5][,fill=d5][,fill=d1[,fill=d5][,fill=d5][,fill=d1[,fill=d5][,fill=d1[,fill=d1[[][][[][][]]][][[[][][]][][[][][]]]][,fill=d5][,fill=d5]][,fill=d5]]]][,fill=d5][,fill=d5]]
]
\end{forest}
}  
\captionsetup{width=.85\linewidth}
\caption{A $p$-perfect path of length $p=5$ in a $3$-Catalan tree.
Every sibling off the path is a leaf.}\label{fig:preperfect}
\end{figure}
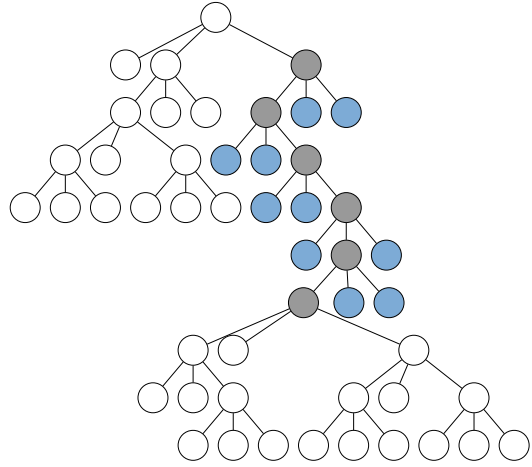

In Section \ref{subsec:likelihoodPerfectTrees} we appeal to methods from the local limit theory of large random trees to show that, when the number of vertices is large, $p$-perfect trees are overwhelmingly common.
Indeed, there is an underlying philosophy in the local behaviour of large random trees that distinct parts of the tree behave more or less independently of one another.
With this picture in mind, the probability that a tree avoids having a $p$-perfect path somewhere decays exponentially in the number of vertices.
This motivates the following result, which we will prove using a simple algorithm for generating uniform $d$-Catalan trees.
\begin{thm} \label{thm:perfect}
{Let $p\geq 1$} and let $\mathbb{P}_k^{(d)}$ be the uniform measure on $\mathcal{C}_k^{(d)}$. 
Then there exists a constant $\kappa_{p,d}>0$ such that
\begin{align*}
\mathbb{P}_k^{(d)}( \text{$T$ is not $p$-perfect} ) \leq e^{ - \kappa_{p,d} {(k-p)_+} } \qquad \text{for all } k\in\mathbb{N},
\end{align*}
where $x_+:=\max(x,0)$.
In particular, we may set $\kappa_{p,d} := (2 p d^pe^{p})^{-1}$.
\end{thm}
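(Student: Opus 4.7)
The plan is to encode the uniform $d$-Catalan tree $T\in\mathcal{C}_k^{(d)}$ via its depth-first Łukasiewicz word and apply a chunking argument to detect a specific canonical $p$-perfect subtree in that encoding. Let $S_p$ denote the $d$-Catalan tree formed by $p$ internal vertices arranged in a leftmost ancestral chain, each with $d-1$ leaf siblings and with the deepest internal vertex having $d$ leaf children; a short check shows that its depth-first encoding is the word $\sigma:=N^pL^{(d-1)p+1}$ of length $dp+1$, where $N$ and $L$ stand for internal and leaf vertices respectively. The key observation is that whenever $\sigma$ appears as a consecutive substring of the Łukasiewicz word of $T$, the vertex visited in depth-first order at the leading $N$ of the substring is forced to root a copy of $S_p$, and hence $T$ is $p$-perfect.

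I would then invoke the standard representation of the uniform $T$ as a critical Galton--Watson tree (with offspring $0$ w.p.\ $(d-1)/d$ and $d$ w.p.\ $1/d$) conditioned on having $k$ internal vertices, which identifies the Łukasiewicz word with the first $dk+1$ steps of an i.i.d.\ walk with steps $d-1$ w.p.\ $1/d$ and $-1$ w.p.\ $(d-1)/d$, conditioned on its first-passage time to $-1$ being $dk+1$. I would partition $\{1,\dots,dk+1\}$ into $M:=\lfloor(dk+1)/(dp+1)\rfloor$ disjoint consecutive blocks of length $dp+1$, observe that $M\ge(k-p)/(2p)$ for $k\ge p$, and note that in the unconditional i.i.d.\ model the block events are independent with each block equalling $\sigma$ with probability $\beta:=(1/d)^p((d-1)/d)^{(d-1)p+1}$. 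A short computation using $(1-1/d)^{d-1}\ge 1/e$ for all $d\ge 2$ (which follows from $(d-1)\log(1-1/d)\ge -1$) yields $\beta\ge 1/(2d^pe^p)$, and hence
\[
\mathbb{P}_{\mathrm{iid}}(\text{no block equals }\sigma)\le(1-\beta)^M\le\exp\bigl(-(k-p)/(4pd^pe^p)\bigr).
\]

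The main obstacle will be transferring this unconditional estimate to the uniform measure $\mathbb{P}_k^{(d)}$ cleanly and with the stated constant $\kappa_{p,d}=(2pd^pe^p)^{-1}$. The naive route of dividing by $\mathbb{P}(\tau=dk+1)$ introduces a Stirling-type prefactor of order $k^{3/2}$ that cannot be absorbed into the exponential uniformly in $k$. To circumvent this I would sample $T$ directly via the cycle lemma---taking a uniformly random word over $\{N,L\}$ of composition $(k,(d-1)k+1)$ and rotating it to its unique valid Łukasiewicz form---and invoke negative association (Joag-Dev--Proschan) of the indicator variables $\{\text{block }i\text{ equals }\sigma\}$ under sampling without replacement, which preserves the product bound $(1-P_0)^M$ where $P_0$ is the hypergeometric block probability; a direct computation shows $P_0/\beta\to 1$ as $k\to\infty$ with the ratio bounded uniformly in $k\ge p$. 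Further refinement---such as accounting for the $d^{p-1}$ distinct depth-first encodings of $p$-perfect subtrees arising from the $d$ choices for the path position at each of the first $p-1$ levels---tightens the estimate and recovers the stated constant $\kappa_{p,d}=(2pd^pe^p)^{-1}$. For $k\le p$, the bound is trivial since $(k-p)_+=0$.
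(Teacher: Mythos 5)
Your approach---detecting the canonical perfect subtree $S_p$ via the $\mathrm{Łukasiewicz}$ (depth-first) encoding of $T$ and a block argument---is genuinely different from the paper's, and the core combinatorial observations are sound. The encoding $\sigma = N^p L^{(d-1)p+1}$ of $S_p$ is correct, and it is indeed true that a consecutive occurrence of $\sigma$ in the $\mathrm{Łukasiewicz}$ word forces the vertex at the leading $N$ to root a copy of $S_p$, hence $T$ is $p$-perfect. The unconditional i.i.d.\ estimates $\beta\ge 1/(2d^pe^p)$ (using $(1-1/d)^{d-1}\ge 1/e$) and $M\ge(k-p)/(2p)$ are correct, and you correctly diagnose that the conditioning on exactly $k$ internal vertices is the real obstacle, since a crude change of measure leaves a polynomial prefactor in $k$ that cannot be absorbed into the exponential uniformly.

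The proposed repair, however, has two genuine gaps. First, the cycle-lemma transfer is not closed: after sampling the uniform word $W$ and rotating it to the $\mathrm{Łukasiewicz}$ word $\hat W$, a fixed block of $W$ equalling $\sigma$ only witnesses $p$-perfection if it does not straddle the (data-dependent) rotation point; controlling this requires excluding the one random straddling block, and the natural workaround (ask for two matching blocks) re-introduces a polynomial prefactor. Second, the negative-association step is not justified: the Joag-Dev--Proschan theorem governs \emph{monotone} functions of disjoint coordinate sets of a random permutation, whereas the event ``block $i$ equals $\sigma$'' (which mixes $N$'s and $L$'s in a prescribed pattern) is not monotone, so the product bound $(1-P_0)^M$ does not follow from NA of the underlying coordinates alone---one would need an additional argument, e.g.\ conditioning on the per-block $N$-counts and proving concentration for those multivariate-hypergeometric counts. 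Finally, the ``further refinement'' via $d^{p-1}$ encodings is too vague to assess and in fact does not recover the stated constant $(2pd^pe^p)^{-1}$ when $p=1$ (it requires $2d\le d^p$). The paper avoids all of this by working directly with the sequential sampling algorithm built on the $d$-Catalan recursion: a one-line ratio computation (Lemma~\ref{lem:rootChildren}) shows the root has $d-1$ leaf children with probability at least $1/e$, this gives a perfect path from the root with probability at least $e^{-p}$, and a self-similar recursion on the at most $d^p$ subtrees hanging at generation $p$ closes the bound by induction on $k$---no change of measure, no cycle lemma, and the constant $(2pd^pe^p)^{-1}$ falls out of the induction step.
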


If a tree is $p$-perfect, then rearranging the subtrees coming off its perfect path keeps the tree unchanged.
Thus, $p$-perfect trees lie in a singleton shuffle class, and, in the setting of Conjecture \ref{conj:shuffle}, we can adjust the parameteres $\lambda_\mathcal{S}$ so that \eqref{eq:preeq} is true at least for all perfect trees, i.e.\ for a fraction of at least $(1 -  e^{ - \kappa_{p,d} {(k-p)_+} } )$ of trees in $\mathcal{C}_k^{(d)}$.
In fact, we are also able to prove that, up to an error of the order $e^{ - O(\log^2 k)}$, the equation \eqref{eq:preeq} is true for \emph{all} trees in $\mathcal{C}_k^{(d)}$.
Let us summarise these facts in the following approximate version of Conjecture~\ref{conj:shuffle}, which states that the constant function can be well approximated, in multiple ways, by a function in the span of shuffle indicators.

\begin{thm}[Approximating the constant function] \label{thm:approx}
Let $p\in\mathbb N$ {and $d\geq 2$}.
For $k\in\mathbb N$ sufficiently large, there exists a function $\phi_k$ in $\mathrm{span} \{ \mathbf{1}_\mathcal{S} \colon \mathcal{S}\text{ is a length-$p$ shuffle class in $\mathcal{C}_k^{(d)}$} \}$ that well approximates the constant function $\mathbf{1}$, in the sense that the following bounds hold:
\begin{itemize}
\item \emph{($\ell^1$ bound)} the exponential total variation bound
\begin{align} \label{eq:xx}
|| \phi_k - \mathbf{1}||_1 := \frac{1}{\# \mathcal{C}_k^{(d)} } \sum_{T \in \mathcal{C}_k^{(d)} } | \phi_k(T) - 1 | &\leq C e^{ - c k};
\end{align}
\item \emph{($\ell^\infty$ bound)}
the superpolynomial uniform bound
\begin{align} \label{eq:yy} 
|| \phi_k - \mathbf{1}||_\infty :=  \sup_{T \in \mathcal{C}_k^{(d)} } | \phi_k(T) - 1 |   &\leq C' e^{ - c' (\log k)^2}.
\end{align}
\end{itemize}
Here $C,c,C',c'\in(0,\infty)$ are constants that depend on the degree $d$ of the trees and on the length $p$ of the shuffle classes, but not on the number $k$ of internal vertices in the trees.
\end{thm}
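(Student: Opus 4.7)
The plan is to write $\phi_k := \phi_k^{(1)} + \phi_k^{(2)}$ as a sum of a main term $\phi_k^{(1)}$ built from singleton shuffle classes of $p$-perfect trees and a correction $\phi_k^{(2)}$ supported entirely on $\mathcal{C}_k^{(d)} \setminus \mathcal{P}_k$, where $\mathcal{P}_k$ denotes the set of $p$-perfect trees. This split is chosen so that $\phi_k^{(1)}$ alone already gives the $\ell^1$ bound \eqref{eq:xx} via Theorem \ref{thm:perfect}, while $\phi_k^{(2)}$ is designed to be close to $\mathbf{1}$ on non-$p$-perfect trees without perturbing $\phi_k^{(1)}$ on $\mathcal{P}_k$, so that the sum $\phi_k$ satisfies both bounds.

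For the main term, observe that if $T \in \mathcal{C}_k^{(d)}$ is $p$-perfect then every sibling off any $p$-perfect ancestral path of $T$ is a leaf, so permuting these siblings returns $T$ to itself: the corresponding length-$p$ shuffle class is the singleton $\{T\}$. Thus, choosing a canonical $p$-perfect path inside each $T \in \mathcal{P}_k$, the function
\[
\phi_k^{(1)} := \sum_{T \in \mathcal{P}_k} \mathbf{1}_{\{T\}}
\]
lies in the shuffle-indicator span and coincides pointwise with $\mathbf{1}_{\mathcal{P}_k}$. Theorem \ref{thm:perfect} then gives $\|\phi_k^{(1)} - \mathbf{1}\|_1 = \mathbb{P}_k^{(d)}(T \text{ not $p$-perfect}) \leq e^{-\kappa_{p,d}(k-p)_+}$, which is \eqref{eq:xx}. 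As a byproduct, the singleton indicator $\mathbf{1}_{\{T'\}}$ is in the shuffle-indicator span for every $T' \in \mathcal{P}_k$, and consequently for any length-$p$ shuffle class $\mathcal{S}$ the perfect-cleaned indicator
\[
\mathbf{1}_{\mathcal{S} \cap (\mathcal{C}_k^{(d)} \setminus \mathcal{P}_k)} = \mathbf{1}_\mathcal{S} - \sum_{T' \in \mathcal{S} \cap \mathcal{P}_k} \mathbf{1}_{\{T'\}}
\]
is also in the span.

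For the correction, I would build $\phi_k^{(2)}$ as a linear combination of these perfect-cleaned indicators, chosen so that $\phi_k^{(2)}$ vanishes on $\mathcal{P}_k$ by construction and is uniformly close to $\mathbf{1}$ on $\mathcal{C}_k^{(d)} \setminus \mathcal{P}_k$ with error $e^{-c'(\log k)^2}$. The construction exploits a refinement of Theorem \ref{thm:perfect}: for each non-$p$-perfect $T$, even though there is no $p$-perfect path, there must be many length-$p$ ancestral paths along which only a bounded number of the $(d-1)p$ siblings are non-leaves, so the corresponding near-perfect shuffle classes have cardinality bounded by a constant $C_{p,d}$. Weighting the perfect-cleaned indicators of these near-perfect shuffle classes by suitable reciprocals and combining via inclusion-exclusion should produce a $\phi_k^{(2)}$ with the desired properties; the superpolynomial rate $e^{-c'(\log k)^2}$ is expected to arise from iterating the local-limit-type argument underlying Theorem \ref{thm:perfect} at $\sim \log k$ independent scales within the tree. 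Setting $\phi_k := \phi_k^{(1)} + \phi_k^{(2)}$, the bound \eqref{eq:yy} holds by construction, while the $\ell^1$ contribution of $\phi_k^{(2)}$ is bounded by $e^{-c'(\log k)^2} \cdot \mathbb{P}_k^{(d)}(T \text{ not $p$-perfect}) \leq e^{-\kappa_{p,d}(k-p)_+}$, keeping \eqref{eq:xx} intact.

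The main obstacle will be the inclusion-exclusion in the construction of $\phi_k^{(2)}$: distinct non-$p$-perfect trees may share near-perfect shuffle classes in intricate ways, so calibrating the reciprocal weights so that each non-$p$-perfect $T$ receives total coefficient $1$ up to error $e^{-c'(\log k)^2}$ is delicate. The key structural input will be the constant bound $C_{p,d}$ on the sizes of the near-perfect shuffle classes used, which keeps the inclusion-exclusion at each scale finite-dimensional, combined with the independence heuristic from the local limit theory of large random trees that drives Theorem \ref{thm:perfect}; this should allow the error to be compressed to $e^{-c'(\log k)^2}$ through polylogarithmically many independent corrections at different scales.
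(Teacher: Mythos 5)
Your decomposition strategy mirrors the structure of the paper's own proof, and the $\ell^1$ half is correct: $\phi_k^{(1)} = \mathbf{1}_{\mathcal{P}_k}$ is exactly the function the paper uses in Section~\ref{subsec:L1bound} to obtain \eqref{eq:xx}, and the observation that $p$-perfect trees lie in singleton shuffle classes (so any function supported on $\mathcal{P}_k$ lies in the span) is also what the paper leans on when it patches together its final $\phi_*$. Where your proposal stops being a proof is in the construction of $\phi_k^{(2)}$. You describe it only at the level of a plan: a linear combination of ``perfect-cleaned indicators'' of ``near-perfect'' shuffle classes, calibrated by inclusion-exclusion across $\sim\log k$ ``scales.'' You never specify the weights, never define what a near-perfect shuffle class is precisely, and you yourself flag the inclusion-exclusion as the main obstacle. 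As it stands, this is a wish rather than an argument.

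There are also substantive reasons to doubt that the sketch can be completed as stated. The claim that every non-$p$-perfect $T$ contains ``many length-$p$ paths along which only a bounded number of the $(d-1)p$ siblings are non-leaves'' is unsubstantiated: a near-complete $d$-ary tree has most length-$p$ paths with essentially \emph{all} $(d-1)p$ subtended subtrees non-trivial. And the $e^{-c'(\log k)^2}$ rate is asserted to arise from independence across scales, but shuffle classes at different depths in the same tree are far from independent, so the error-compression step is exactly the part that would need a real idea and none is provided. The paper's actual route is quite different and entirely explicit. It introduces the ``width product'' function $\phi(T) = 1 - \prod_{m\geq 1}\frac{N_{\leq m(p-1)}(T)}{dk+1}$ and proves it lies in the span via Lemma~\ref{lem:prodShuffle}: if $\chi$ depends only on generations $0,\ldots,j$ of $T$, then $T\mapsto N_{j+p}(T)\,\chi(T)$ is in the span, because shuffles along a path ending at depth $j+p$ fix the first $j$ generations. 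Iterating this (Corollary~\ref{cor:prodShuffle}) gives a telescoping family whose sum is $\phi$, and the superpolynomial bound \eqref{eq:yy} drops out of the elementary estimate $N_{\leq m(p-1)}(T)\leq d^{mp}\wedge (dk+1)$ (Lemma~\ref{lem:sup}) --- no local-limit or independence input is needed at all. The final $\phi_*(T) = \mathbf{1}\{T\text{ $p$-perfect}\} + \mathbf{1}\{T\text{ not $p$-perfect}\}\,\phi(T)$ is precisely your $\phi_k^{(1)} + \phi_k^{(2)}$ with $\phi_k^{(2)} = \mathbf{1}\{T\text{ not $p$-perfect}\}\phi(T)$; what you are missing is $\phi$ itself and the lemma that puts it in the shuffle span.
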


Notice that the $1$-norm of \eqref{eq:xx} can be also interpreted in terms of the uniform measure $\mathbb{P}_k^{(d)}$ on $\mathcal{C}_k^{(d)}$ defined in Theorem \ref{thm:perfect}.
Indeed, if we denote by $\mathbb{E}_k^{(d)}$ the expectation with respect to $\mathbb{P}_k^{(d)}$, then $|| f ||_1= \mathbb{E}_k^{(d)} |f(T)|$ for any function $f\colon \mathcal{C}_k^{(d)}\to\mathbb{R}$.

We finally tie this result back to the Jacobian conjecture: we leverage Theorem \ref{thm:approx} to provide quantitative information about the inverses of Keller maps, concluding that these have small power series coefficients for their high degree terms. 
The precise statement of this approximate version of the Jacobian conjecture can be found in Theorem \ref{thm:twobounds}.

\subsection{Relevant literature} \label{subsec:literature}
In this section we touch on some related work.

{To attack the Jacobian conjecture,} numerous authors have studied the polynomial invertibility of Keller mappings of various degrees in various dimensions.
Notably, Wang \cite{wang} proved in 1982 that the Jacobian conjecture is true in the special quadratic case:

\begin{thm}[Wang's Theorem \cite{wang}]\label{thm:wang}
Every quadratic Keller mapping is a polynomial automorphism.
\end{thm}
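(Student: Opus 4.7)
The plan is to prove Wang's theorem directly from the tree-weight formula \eqref{eq:Gsum} for the inverse coefficients together with the nilpotency condition \eqref{eq:nilp0}, exploiting the fact that in the quadratic case $JH$ is linear in $X$. First I would reduce, via a linear change of variables, to $F = I - H$ with $H$ being $2$-homogeneous, in which case $\det JF$ being a nonzero constant is equivalent to $(JH)^n = 0$ as a matrix of polynomials. Writing $JH(X) = \sum_{m=1}^{n} X_m A_m$ for constant $n \times n$ matrices $A_m$ with entries $(A_m)_{ij} = H_{i, e_j + e_m}$, nilpotency is equivalent to the symmetrized identity
\[
\sum_{\sigma \in \mathfrak{S}_n} A_{m_{\sigma(1)}} A_{m_{\sigma(2)}} \cdots A_{m_{\sigma(n)}} = 0 \qquad \text{for all } (m_1,\dots,m_n) \in [n]^n.
\]

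The key observation is that the $H$-weight \eqref{eq:blender} of any labelled $2$-Catalan tree factors as a product of entries of the matrices $A_m$: at each non-leaf vertex $v$ with children of types $j$ and $k$, the factor $H_{\tau(v), e_j + e_k}$ equals $(A_k)_{\tau(v), j}$. Fixing an ancestral path $v_0, v_1, \dots, v_p$ in $T \in \mathcal{C}_k^{(2)}$ with off-path siblings $w_1, \dots, w_p$, the contribution of the on-path vertices $v_0, \dots, v_{p-1}$ to $\mathcal{E}_H(\mathcal{T})$ telescopes into the single matrix entry $(A_{\tau(w_1)} A_{\tau(w_2)} \cdots A_{\tau(w_p)})_{\tau(v_0), \tau(v_p)}$, while the remaining factors depend only on the sibling subtrees and the tail subtree below $v_p$, hence are invariant under permutations of these siblings among themselves.

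I would then fix $p = n$ and use that any binary tree with $k \geq 2^n - 1$ internal vertices has height at least $n$ and so contains an ancestral path of length $n$. Partitioning $\mathcal{C}_k^{(2)}$ into length-$n$ shuffle classes along canonically chosen paths and summing the labelled-tree weights within each class produces a common shuffle-invariant factor multiplied by $\sum_{\sigma} A_{s_{\sigma(1)}} \cdots A_{s_{\sigma(n)}}$, which vanishes by the identity above. This yields $\Phi_i^\alpha(H) = 0$ for all $i$ and all $\alpha$ with $|\alpha| = k+1 \geq 2^n$, so the inverse of $F$ is a polynomial of degree at most $2^n - 1$.

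The main obstacle is the combinatorial bookkeeping: one must choose a length-$n$ ancestral path in each tree in such a way that the on-path/off-path partition of vertex positions is invariant under permutations of the off-path sibling subtrees, so that the shuffle classes genuinely partition $\mathcal{C}_k^{(2)}$. A natural workaround is to select the path using only data that is manifestly shuffle-invariant---for example, via the shape of the tail subtree below $v_n$, chosen to be maximal in some shuffle-invariant order---or alternatively to sum over all length-$n$ ancestral paths in each tree with appropriate normalization, so that no canonical choice is required.
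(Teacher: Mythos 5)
The paper does not actually give a proof of Wang's Theorem: it is stated as a known result, cited to \cite{wang} (and to \cite{BCW} for a short algebraic proof), and the paper explicitly remarks immediately afterward that \emph{``to this day, there is no known direct combinatorial derivation of this result.''} So a completed version of your argument would be new, and you should not expect the paper to have a combinatorial proof for you to compare against.

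Unfortunately, the argument as sketched has a genuine gap, and you have correctly located it: the step where you ``partition $\mathcal{C}_k^{(2)}$ into length-$n$ shuffle classes along canonically chosen paths.'' Shuffle classes do not partition $\mathcal{C}_k^{(2)}$. A given tree typically lies in many length-$n$ shuffle classes (one for each vertex of height at least $n$), and two such classes attached to different trees can intersect without coinciding. What you would need is a selection rule assigning to each tree $T$ a single length-$n$ shuffle class $\mathcal{S}(T) \ni T$ such that $\mathcal{S}(T') = \mathcal{S}(T)$ for every $T' \in \mathcal{S}(T)$, and such that the images $\{\mathcal{S}(T)\}$ are pairwise disjoint and cover $\mathcal{C}_k^{(2)}$. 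No such rule is known, and your two proposed ``workarounds'' do not close this gap. Selecting the path by a shuffle-invariant criterion would ensure the consistency condition but does nothing to guarantee that the resulting classes are disjoint or that they cover $\mathcal{C}_k^{(2)}$ without multiplicity. Summing over all length-$n$ paths with normalization is precisely the construction of $\psi$ in Section \ref{subsec:first}, which produces a function satisfying $\|\psi - \mathbf{1}\|_\infty = O(1/k)$ rather than $\psi \equiv \mathbf{1}$; an $O(1/k)$ discrepancy, when paired with the $H$-weight bound \eqref{eq:Esum}, still permits exponentially large inverse coefficients and hence does not force $\Phi_i^\alpha(H) = 0$.

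In other words, you have reduced Wang's theorem to the $d=2$, $p=n$ case of the Subtree Shuffling Conjecture (Conjecture \ref{conj:shuffle}) --- or more precisely to the existence of a $\{0,1\}$-coefficient solution of \eqref{eq:preeq}, which is even stronger. The relevant prior art is Singer's Conjecture \ref{conj:strongsinger}, which Singer established only for $p \in \{1,2,3\}$ in the $d=2$ case, and which remains open for $p = n$ growing with the dimension. Everything up to and including the telescoping matrix-product identity $\prod_i (A_{\tau(w_{i+1})})_{\tau(v_i),\tau(v_{i+1})}$ and the use of the symmetrized nilpotency relation $\sum_\sigma A_{m_{\sigma(1)}}\cdots A_{m_{\sigma(n)}} = 0$ is correct and is precisely the content of Lemma \ref{lem:kelp}, Lemma \ref{lem:nilp2} and Lemma \ref{lem:nilp} specialized to $d=2$; but the final covering step is exactly the open problem that the rest of this paper is organized around.
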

See \cite{BCW} for a simple proof.
Wang's theorem contrasts strikingly with the degree $d \geq 3$ reduction of Bass, Connell and Wright, which states that, in order to prove the Jacobian conjecture, it is sufficient to study degree $d$ Keller mappings {for a fixed $d\geq 3$}.

Notably, Wang's theorem implies that the analogue of the combinatorial formulation of the Jacobian conjecture (Conjecture \ref{conj:JC2}), \emph{but with degree $d =2$ rather than $d \geq 3$}, is true!
To this day, there is no known direct combinatorial derivation of this result.
The authors are hopeful that a combinatorial proof of Wang's theorem may lead to information on how to prove (or disprove) the more general Jacobian conjecture by combinatorial means.

Combinatorial approaches to the Jacobian conjecture in the vein of Section \ref{sec:combapproach}  date back at least as far as the work of Bass, Connell and Wright \cite{BCW}.
Wright has continued work on the conjecture from the perspective of inversion formulas in \cite{wright87,wright89,wright05a,wright05b}.
The explicit formulation involving weighted trees given in Conjecture \ref{conj:JC2} was given explicitly in \cite{JP}, but the idea is present in the aforementioned work of Wright, as well as in Singer \cite{singer1,singer2} and Zeilberger \cite{zeilberger}.
Let us also mention Abdesselam \cite{AA}, whose combinatorial approach draws on quantum field theory.

Singer \cite{singer1,singer2} first utilised `deweighting' techniques to convert the combinatorial formulation of the Jacobian problem involving weighted trees to one involving unweighted trees.
In \cite{singer1,singer2}, Singer was mainly concerned with studying the classical (i.e.\ $2$-ary) Catalan trees to obtain a combinatorial derivation of Wang's theorem involving shuffle classes of $2$-Catalan trees.
As we will explain in Section \ref{subsec:orthogonality}, our methods and techniques draw inspiration from Singer's approach.

Johnston \cite{johnston} recently used combinatorial methods to study the so-called strongly nilpotent automorphisms, i.e.\ those of the form $F = {I} - H$, where
\begin{align*}
JH(X^{(1)})\cdots JH(X^{(q)}) = 0 \qquad \forall X^{(1)},\ldots,X^{(q)} \in \mathbb{C}^n .
\end{align*}
The above property is called strong nilpotency of index $q$, as it is a strengthening of the nilpotency condition $(JH)^q = 0$.
Using combinatorial methods, \cite{johnston} showed that, if $F = {I} - H$ has degree $d$ and is strongly nilpotent of index $q$, then the inverse $F^{-1}$ of $F$ has degree at most $d^{q-1}$.

Finally, let us mention a conceptually related work of Le Jan and Sznitman \cite{LJS}, who used probabilistic expansions involving trees to study the three-dimensional Navier-Stokes equation. 

\subsection{Outline of the article}
\label{subsec:outline}
The remainder of this article is structured as follows.
\begin{itemize}
\item In Section \ref{sec:inversion}, we use arguments from branching processes to give a rapid proof of a combinatorial inversion formula for multivariate power series, which is in fact new in this generality.
\item In Section \ref{sec:Keller}, we show that the Keller property has a combinatorial interpretation in terms of shuffle classes of both labelled and unlabelled trees.
\item In Section \ref{sec:conjectures}, we prove Theorem \ref{thm:relation} and Theorem \ref{thm:shufflechain}.
\item In Section \ref{sec:exponential}, we study perfect trees, proving Theorem \ref{thm:perfect}.
This leads to the $\ell^1$ bound in Theorem \ref{thm:approx} and to an approximate version of the Jacobian conjecture (Theorem \ref{thm:twobounds}).
\item In Section \ref{sec:infinity}, we construct the $\ell^\infty$ approximation in Theorem \ref{thm:approx} and conclude the proof of the theorem.
\end{itemize}

Sections \ref{sec:inversion} and \ref{sec:Keller} contain new probabilistic proofs and insights, but are somehow preliminary to the rest of the work; they can be skipped on a first reading.
Sections \ref{sec:conjectures}, \ref{sec:exponential} and \ref{sec:infinity} are the core of this article.

\section{Combinatorial inversion via Galton--Watson trees} \label{sec:inversion}

In this section we study combinatorial inversion of power series.
The inversion formula we obtain is slightly more general than to the one appearing in \cite{JP}.
For the proof we use a novel and efficient probabilistic approach based on {multi-type} Galton--Watson trees.


We first settle some notation.
We will consider power series mappings $F=(F_1,\dots,F_n)$ with $i^{\text{th}}$ component of the form
\begin{align}
\label{eq:alphaFact}
F_i(X_1,\ldots,X_n) = X_i - \sum_{ \alpha \neq 0} h_{i,\alpha}X^\alpha = X_i - \sum_{ \alpha \neq 0} \frac{H_{i,\alpha}}{\alpha!}X^\alpha,
\end{align}
where the sum is taken over multi-indices $\alpha=(\alpha_1, \ldots, \alpha_n) \in \mathbb{Z}_{ \geq 0}^n\setminus \{0\}$, and we set $\alpha! := \alpha_1!\cdots \alpha_n!$ and $X^\alpha := X_1^{\alpha_1} \cdots X_n^{\alpha_n}$.
As such, we canonically associate $F$ with the set of complex coefficients $h=(h_{i,\alpha}\colon i \in [n],\alpha\neq 0)$ or $H = (H_{i,\alpha}\colon i \in [n],\alpha\neq 0)$.
We will always use an upper case $H_{i,\alpha}$ when the coefficients are divided through by $\alpha!$.
Working with the undivided coefficients is more amenable for probabilistic arguments, while working with the divided coefficients yields cleaner formulas when using the algebraic condition of nilpotency.
Therefore, we initially use the $h_{i,\alpha}$ to prove the inversion formula; we then rephrase the latter in terms of the $H_{i,\alpha}$ and continue with the upper case coefficients in the study of nilpotency in Section~\ref{sec:Keller}.

\subsection{Inversion formula for ordered trees}
\label{subsec:invFormula}
 
A labelled planar rooted {tree} $\mathcal{T}=(T,\tau)$ is a planar rooted tree $T=(V,E)$ together with a type function $\tau:V \to [n]$ giving the vertices types in $[n]$; we also call this an \textbf{$n$-type tree}.
We say such a tree is \textbf{ordered} if siblings of smaller types always lie to the left of siblings of larger types.

\begin{figure}
  \centering   
\scalebox{.7}{
 \begin{forest}
[1, fill=c1
[1,  fill=c1
[1, fill=c1]
[3,fill=c3]
]
[2, fill=c2
[2,fill=c2]
[3,  fill=c3
[1, fill=c1]
]
[3,  fill=c3]
]
[3, fill=c3
[1,  fill=c1]
]
]
\end{forest}
}  
\captionsetup{width=.85\linewidth}
\caption{An ordered $3$-type tree {in $\ordS_{1,(3,1,2)}$}.
}\label{fig:mbt}
\end{figure}

Given an $n$-type tree $\mathcal{T}$, we write $\mathrm{Leaftype}(\mathcal{T}) \in \mathbb{Z}_{\geq 0}^n$ for the multi-index with components
\begin{align*}
\mathrm{Leaftype}(\mathcal{T})_j := \# \{ \text{type $j$ leaves of $\mathcal{T}$} \}, \qquad j=1,\ldots,n.
\end{align*}
Given $i \in [n]$ and $\alpha \in \mathbb{Z}_{ \geq 0}^n$, let $\mathbb{S}_{i,\alpha}$ and $\vec{\mathbb{S}}_{i,\alpha}$ denote respectively the set of unordered and ordered $n$-type trees with root type $i$ and $\mathrm{Leaftype}(\mathcal{T}) := \alpha$ (i.e., the tree has $\alpha_j$ leaves of type $j$).
See Figure \ref{fig:mbt} for an example. In what follows, for $j$ in $[n]$, we will write $\mathbf{e}_j \in \mathbb{Z}_{ \geq 0}^n$ 
for the multi-index whose $j^{\text{th}}$ component is a $1$ and whose other components are $0$.
Let $(h_{i,\alpha} \colon i \in [n], \alpha \in \mathbb{Z}_{\geq 0}^n)$ be a collection of complex coefficients.
For an internal (i.e.\ non-leaf) vertex $v$, let $\mu(v) \in \mathbb{Z}_{\geq 0}^n$ be the multi-index whose $j^{\text{th}}$ component counts the number of type $j$ children of~$v$.
Define a second collection of coefficients by
\begin{align} \label{eq:Gco}
g_{i,\alpha} := \sum_{ \mathcal{T} \in \ordS_{i,\alpha}} \prod_{\substack{v \text{ internal} \\ \text{vertex of } \mathcal{T}}} h_{\tau(v),\mu(v)}.
\end{align}

We note that the sum in \eqref{eq:Gco} is over an infinite set, since, for any fixed $\alpha$, a tree in $\ordS_{i,\alpha}$ may contain an arbitrarily long path of vertices, each of whom has exactly one child.
We will assume for the remainder of this section that the linear coefficients $h_{i,\mathbf{e}_j}$ are such that
\begin{align} \label{eq:as1}
\sum_{k =0}^\infty A^k \text{ converges,} \quad \text{where } A=(A_{i,j})_{1\leq i,j\leq n}, \quad A_{i,j} := h_{i,\mathbf{e}_j} .
\end{align}
This assumption is sufficient to ensure that the sum in \eqref{eq:Gco} converges; see the discussion on alternating trees in \cite{JP} for details. 

We now state our main result of this section:

\begin{thm} \label{thm:inversion}
Suppose $(h_{i,\alpha} \colon i \in [n], \alpha \in \mathbb{Z}_{\geq 0}^n\setminus\{0\})$ is a collection of complex coefficients satisfying \eqref{eq:as1}, and that the power series mapping $F=(F_1,\dots,F_n)$ with $i^{\text{th}}$ component
\begin{equation} \label{eq:components}
F_i(X_1,\ldots,X_n) := X_i - \sum_{ \alpha \neq 0 } h_{i,\alpha}X^\alpha
\end{equation}
converges in an open neighbourhood of the origin in $\mathbb{C}^n$.
Then, $F$ has an analytic inverse $F^{-1} =(F^{-1}_1,\ldots,F^{-1}_n)$, with $i^{\text{th}}$ component given by
\begin{equation*}
F_i^{-1}(X_1,\ldots,X_n) := \sum_{ \alpha \neq 0} g_{i,\alpha} X^\alpha,
\end{equation*}
where $g_{i,\alpha}$ are defined in \eqref{eq:Gco}. 
\end{thm}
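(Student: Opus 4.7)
The plan is to prove the identity $F_i^{-1} \circ F = \mathrm{Id}$ (or rather $F \circ F^{-1} = \mathrm{Id}$) at the level of formal power series by exploiting the recursive structure of ordered multi-type trees, which is precisely the combinatorial backbone of a multi-type Galton--Watson process. The probabilistic interpretation is that one can view $g_{i,\alpha}$ as a ``weighted probability'' that a branching tree rooted at type $i$ produces exactly $\alpha_j$ leaves of each type $j$, when a type-$i$ parent generates its offspring multi-index $\beta$ with formal weight $h_{i,\beta}$. After verifying this identity formally, an analytic inverse function argument promotes it to an equality of analytic functions near the origin.

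First I would establish the key recursive decomposition. A tree $\mathcal{T} \in \ordS_{i,\alpha}$ is either a single vertex (necessarily of type $i$, which is simultaneously root and leaf), contributing an empty product with weight $1$ and forcing $\alpha = \mathbf{e}_i$; or the root of type $i$ has some nontrivial collection of children, summarized by a multi-index $\beta \neq 0$ giving $\beta_j$ children of type $j$. Since the tree is ordered, the arrangement of the $|\beta|$ children is determined once we know, for each child, the ordered subtree it roots. Writing $\alpha^{(j,k)}$ for the leaf-type vector of the $k$-th type-$j$ subtree, this yields
\begin{equation*}
g_{i,\alpha} = \delta_{\alpha = \mathbf{e}_i} + \sum_{\beta \neq 0} h_{i,\beta} \sum_{\substack{\alpha^{(j,k)} : \\ \sum_{j,k} \alpha^{(j,k)} = \alpha}} \prod_{j=1}^{n} \prod_{k=1}^{\beta_j} g_{j,\alpha^{(j,k)}}.
\end{equation*}
Multiplying by $X^\alpha$ and summing over $\alpha$ collapses this into the generating function identity $F_i^{-1}(X) = X_i + \sum_{\beta \neq 0} h_{i,\beta} (F^{-1}(X))^\beta$, which rearranges to $F_i(F^{-1}(X)) = X_i$.

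The main obstacle, and the step that requires care, is showing that $g_{i,\alpha}$ is well-defined as an absolutely convergent sum so that the above formal manipulations are legitimate; the issue is that $\ordS_{i,\alpha}$ is infinite because of arbitrarily long chains of degree-one internal vertices. I would resolve this by a standard ``alternating tree'' contraction: every $\mathcal{T} \in \ordS_{i,\alpha}$ factors uniquely as a skeleton tree with no degree-one internal vertex (of which there are only finitely many once $\alpha$ is fixed), decorated by inserting along each edge an arbitrary length-$k$ chain of single-child vertices; the weighted sum over decorations factors through powers of the matrix $A$ from \eqref{eq:as1} and collapses to $(I - A)^{-1} = \sum_{k\geq 0} A^k$, which converges by assumption. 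This bounds $|g_{i,\alpha}|$ by a finite sum of products of $|h_{i,\beta}|$ and matrix entries of $(I - |A|)^{-1}$, giving absolute convergence.

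Finally, to upgrade from formal series to analytic functions, I would invoke the inverse function theorem: the assumption \eqref{eq:as1} forces $I - A$ to be invertible, hence $\det JF(0) \neq 0$, so $F$ has a genuine analytic inverse in some neighbourhood of the origin. Its Taylor coefficients satisfy the same formal power-series inversion equations as the $g_{i,\alpha}$, and uniqueness of the compositional inverse in the ring of formal power series forces the two to coincide. Thus $F^{-1}(X) = \sum_\alpha g_{i,\alpha} X^\alpha$ as an analytic identity, completing the proof.
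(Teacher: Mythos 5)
Your proof is correct, but it follows a genuinely different route from the paper's. The paper restricts (by analytic continuation) to the probabilistic regime \eqref{eq:GWo} in which the $h_{i,\alpha}$ form a sub-stochastic multi-type offspring distribution, introduces the leaf-type generating function $E$ of a multi-type Galton--Watson tree, derives the functional equation $E = hE + D$ via the branching property (Lemma~\ref{lem:funktionell}), and combines this with $E = gD$ to conclude $(I-h)g = I$. Your argument instead works directly with arbitrary complex coefficients: you decompose each $\mathcal{T}\in\ordS_{i,\alpha}$ by removing the root to obtain the recursion $g_{i,\alpha} = \delta_{\alpha=\mathbf{e}_i} + \sum_{\beta\neq 0} h_{i,\beta}\sum\prod g_{j,\alpha^{(j,k)}}$ and resummarize to $g_i(X) = X_i + h_i(g(X))$, i.e.\ $F\circ g = I$. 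This is the same combinatorial content --- the branching property is precisely the root-removal decomposition of the tree --- but you bypass the probabilistic framing and the analytic continuation step entirely, at the cost of having to justify absolute convergence of the infinite sum \eqref{eq:Gco} directly; your alternating-tree contraction, collapsing maximal chains of one-child vertices into the geometric series $\sum_{k\geq 0} A^k = (I-A)^{-1}$, is exactly the argument the paper delegates to \cite{JP} for that purpose. The paper's route makes convergence automatic (weights are probabilities bounded by one) but must then argue that both $c_{i,\alpha}$ and $g_{i,\alpha}$ depend rationally on the coefficients through $(I-A)^{-1}$ in order to invoke continuation; your route is more self-contained and verifies the identity for all admissible $h$ at once, though it requires the convergence analysis to be done carefully up front so that the Cauchy-product manipulations behind the functional equation are legitimate.
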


Theorem \ref{thm:inversion} is new in this exact formulation, but it is similar to countless other power series inversion formulas in the literature.
The special case of Theorem \ref{thm:inversion} in which each linear term $h_{i,\mathbf{e}_j}$ is zero is well trodden in the literature \cite{BCW,HS,wright87, wright89}.
An explicit formula for the case with non-zero linear {terms} only appeared recently in \cite{JP}, but the formulation therein involves \emph{alternating} trees and is more complicated.
We thus believe that the statement in Theorem \ref{thm:inversion} represents the most general and cleanest inversion formula for multivariate power series {so far}.

{The novelty here} lies in our probabilistic derivation, which is substantially more efficient than other approaches.
 Before proving Theorem \ref{thm:inversion} in Section \ref{subsec:inversionproof}, we first define and discuss multi-type Galton--Watson trees.

\subsection{Multi-type Galton--Watson trees} \label{subsec:GW}

In this section we define multi-type Galton--Watson trees and provide some background.

Recalling that $\ordS_{i,\alpha}$ denotes the set of ordered $n$-type trees with root type $i$ and leaf type $\alpha$, write
\begin{align*}
\ordS_{i} &:=\text{Finite ordered $n$-type trees with root type $i$} =  \bigcup_{\alpha \in \mathbb{Z}_{\geq 0}^n} \ordS_{i,\alpha}, \\
\ordS_{i,*} &:=  \text{Finite or infinite ordered $n$-type trees with root type $i$.}
\end{align*}

Take a collection $(h_{i,\alpha} \colon i \in [n], \alpha \in \mathbb{Z}_{\geq 0}^n)$ of complex numbers, indexed by $\alpha\in\mathbb{Z}_{\geq 0}^n$, as opposed to $\alpha\in\mathbb{Z}_{\geq 0}^n \setminus\{0\}$ as in the statement of Theorem \ref{thm:inversion}.
We say that this collection is a \textbf{multi-type offspring distribution} if $(h_{i,\alpha})_{\alpha \in \mathbb{Z}_{\geq 0}^n}$ is a probability mass function on $\mathbb{Z}_{\geq 0}^n$ for each $i$, i.e.\ if
\begin{align} \label{eq:prob}
h_{i,\alpha} \in [0,1] \quad \text{for each } (i,\alpha) \qquad \text{and} \qquad 
\sum_{\alpha \in \mathbb{Z}_{\geq 0}^n} h_{i,\alpha} = 1.
\end{align}
A multi-type Galton--Watson tree with root type $i$ is a random element of $\ordS_{i,*}$, defined by the following simple rule: the probability that a vertex of type $j$ has children of types $\beta$ is $h_{j,\beta}$.
More specifically, the tree may be constructed as follows.
We begin in generation $0$ with the root, which has type $i$.
Thereafter, we grow the labelled tree outwards from the root. Suppose we have grown up until generation $k$.
Then, for each vertex $v$ of some type $j$ in generation $k$, we choose a random element $\mu(v)$ in $\mathbb{Z}_{\geq 0}^n$ according to the probability mass function $(h_{j,\beta})_{\beta \in \mathbb{Z}_{\geq 0}^n}$; in particular, $h_{j,0}$ is the probability that $v$ is a leaf.
We then order the children of $v$ so that those of smaller types are placed to the left of those of larger types.
This creates a random ordered tree $\mathcal{T}$ in $\ordS_{i,*}$.
It is entirely possible (say, for instance, if $h_{j,0} = 0$ for each $j$) that with positive probability this process never terminates, and that the labelled tree we construct is infinite.

Let $\mathbf{P}_i^h$ denote the distribution of the Galton--Watson tree with root type $i$ and offspring distribution $(h_{i,\alpha})$.
Then, the probability of a tree $\mathcal{T}\in \ordS_{i,*}$ occurring is
\begin{align} \label{eq:treeprob}
\mathbf{P}_i^h(\mathcal{T}) = \prod_{v \text{ vertex of } \mathcal{T}} h_{\tau(v),\mu(v)},
\end{align}
where the product is over all {vertices of $\mathcal{T}$, including all  internal vertices and {leaves}.}

For $\mathcal{T}\in\ordS_{i,*}$, define the {associated process $(Z(k))_{k \geq 0}$ as} the $\mathbb{Z}_{\geq 0}^n$-valued sequence  given by 
\begin{align} \label{eq:process}
Z(k)_j := \# \{\text{vertices of $\mathcal{T}$ of type $j$ in generation $k$} \},
\end{align}
where generation $k$ of the tree refers to the set of vertices graph distance $k$ from the root.
Note that if $\mathcal{T}$ is finite, then $Z(k) = 0$ for all sufficiently large $k$.

Clearly, if a tree $\mathcal{T}$ in $\ordS_{i,*}$ is chosen randomly according to $\mathbf{P}_i^h$, then $(Z(k))_{k \geq 0}$ is a vector-valued Markov process under $\mathbf{P}_i^h$. Moreover, $(Z(k))_{k \geq 0}$ is a branching process. Indeed, let $Z^{j,\ell}(k+1)$ be the multi-index coding the children of the $\ell^{\text{th}}$ vertex of type $j$ in generation $k$, for all $k\geq 0$, $1\leq j\leq n$ and $1\leq \ell \leq Z(k)_j$. It is then clear that the sum of all $Z^{j,\ell}(k+1)$ over $j$ and $\ell$ codes the whole generation $k+1$:
\begin{align*}
Z(k+1) = \sum_{j=1}^n \sum_{\ell=1}^{Z(k)_j} Z^{j,\ell}(k+1) .
\end{align*}
Note that, conditionally on the event $\{ Z(k) = \alpha \}$,  there are $\alpha_j$ vertices of each type $j$ in generation $k$, and $(Z^{j,\ell}(k+1) \colon 1 \leq j \leq n, 1 \leq \ell \leq \alpha_j)$ are independent random variables such that each $Z^{j,\ell}(k+1)$ is distributed according to $(h_{j,\beta})_{\beta \in \mathbb{Z}_{\geq 0}^n}$. Equivalently, each $Z^{j,\ell}(k+1)$ has the same distribution as $\mu(v_{\mathrm{root}})$ in a Galton--Watson tree in which the root has type $j$.
The branching propery will be integral to our probabilistic proof of Theorem \ref{thm:inversion}.

While it will not be relevant to our goals in the remainder of the manuscript, let us close this section by discussing the asymptotic behaviour of multi-type Galton--Watson trees.
Let $\mathbf{E}^h_i$ be the expectation associated with the probability measure $\mathbf{P}^h_i$.
If we define
\begin{align*}
M_{i,j} := \mathbf{E}^h_i[Z(1)_j]
= \sum_{\alpha \in \mathbb{Z}_{\geq 0}^n} h_{i,\alpha} \alpha_j
\end{align*}
to be the expected number of type $j$ children had by a type $i$ vertex, then the asymptotic behaviour of $\mathbf{P}_i^h$ can be well described in terms of spectral properties of the mean matrix $M := (M_{i,j})_{1 \leq i,j \leq n}$.
Namely, if $M$ has an eigenvalue strictly larger than $1$ (\textbf{supercritical} case) and satisfies a certain irreducibility property, then a tree under $\mathbf{P}_i^h$ has a positive probability of being infinite and, accordingly, the sum of probabilities over finite trees satisfies
\begin{align*}
\sum_{ \mathcal{T}\in \ordS_{i} } \mathbf{P}_i^h(\mathcal{T}) < 1.
\end{align*}
On the other hand, whenever every eigenvalue of $M_{i,j}$ is less than one (\textbf{subcritical} case), the tree is guaranteed to be almost surely finite, i.e.\ 
\begin{align*}
\sum_{ \mathcal{T} \in \ordS_{i} } \mathbf{P}_i^h(\mathcal{T}) = 1.
\end{align*}
We refer the reader to \cite{AH} for further discussion of (multi-type) Galton--Watson trees.

\subsection{Proof of Theorem \ref{thm:inversion}} \label{subsec:inversionproof}

Let us give a brief overview of our proof of Theorem \ref{thm:inversion}.
We begin by appealing to the implicit function theorem to establish that $F$ has an analytic inverse $F^{-1}$ in a neighbourhood of the origin.
Thereafter we observe that the coefficients of degree $\leq k$ for the inverse can be expressed as polynomials in the coefficients of degree $\leq k$ in the original power series $F$.
We then appeal to analytic continuation, which allows us to assume that the coefficients of $F$ form a multi-type offspring distribution. 
Under this assumption, our proof of Theorem \ref{thm:inversion} involves studying a generating function of the leaf type of a multi-type Galton--Watson tree.
We utilise the branching property to show that this generating function satisfies a functional equation, which we then manipulate to prove Theorem \ref{thm:inversion}.

Recall that Theorem \ref{thm:inversion} considers a general collection $(h_{i,\alpha}\colon i \in [n],\alpha \in \mathbb{Z}_{\geq 0}^n\setminus\{0\} )$ of complex coefficients satisfying property \eqref{eq:as1}.
However, \textbf{until stated otherwise, we will assume for the remainder of this section that}
\begin{align} \label{eq:GWo}
h_{i,\alpha} \in [0,1] \quad \text{for each } (i,\alpha) \qquad \text{and} \qquad \sum_{\alpha \in \mathbb{Z}_{\geq 0}^n \setminus\{0\} } h_{i,\alpha} < 1.
\end{align}
We may naturally extend the coefficients to a collection also indexed by $\alpha = 0$ by setting 
\begin{align} \label{eq:assu}
d_i := h_{i,0} := 1 - \sum_{\alpha \in \mathbb{Z}_{\geq 0}^n \setminus\{0\} } h_{i,\alpha}.
\end{align}
It then follows from \eqref{eq:GWo} and \eqref{eq:assu} that $d_i > 0$ and that \eqref{eq:prob} holds, so that the collection $(h_{i,\alpha}\colon i \in [n],\alpha \in \mathbb{Z}_{\geq 0}^n )$ is a multi-type offspring distribution.

By \eqref{eq:treeprob} we have, for a given $\mathcal{T}\in \ordS_{i,*}$,
\begin{align*}
\mathbf{P}_i^h(\mathcal{T}) = \prod_{\substack{v \text{ internal} \\ \text{vertex of } \mathcal{T}}} h_{\tau(v),\mu(v)} \prod_{\substack{v \text{ leaf} \\ \text{vertex of } \mathcal{T}}} d_{\tau(v)} .
\end{align*}
Notice that
\begin{align*}
\prod_{\substack{v \text{ leaf} \\ \text{vertex of } \mathcal{T}}} d_{\tau(v)}  = d_1^{\alpha_1}\cdots d_n^{\alpha_n} =: d^\alpha
\qquad \text{for all } \mathcal{T} \in \ordS_{i,\alpha} .
\end{align*}
In particular, recalling the definition \eqref{eq:Gco} of $g_{i,\alpha}$, the probability that a multi-type Galton--Watson tree with root type $i$ is finite with leaf type $\alpha$ is given by 
\begin{align} \label{eq:probLeaves}
\mathbf{P}_i^h( \mathcal{T} \text{ is finite, } \mathrm{Leaftype}(\mathcal{T}) = \alpha ) = \sum_{\mathcal{T} \in \ordS_{i,\alpha}} \mathbf{P}_i^h(\mathcal{T}) = d^{\alpha} g_{i,\alpha}.
\end{align}

The key object we need is the generating function $E=(E_1,\dots,E_n)$ for the leaf type:
\begin{align*}
E_i(s_1,\ldots,s_n) :=\mathbf{E}_i^h\left[ \mathbf{1}_{\{\mathcal{T} \text{ finite}\}} s^{\mathrm{Leaftype}(\mathrm{\mathcal{T}})}\right]
= \sum_{\alpha\neq 0} \mathbf{P}_i^h( \mathcal{T} \text{ is finite, } \mathrm{Leaftype}(\mathcal{T}) = \alpha ) \, s^\alpha ,
\end{align*}
which is analytically well defined for $(s_1,\dots,s_n)$ in a neighbourhood of the origin in $\mathbb{C}^n$. 
It follows from \eqref{eq:probLeaves} that
\begin{align*}
E_i(s_1,\ldots,s_n)
= \sum_{\alpha\neq 0} g_{i,\alpha} d^\alpha s^\alpha
= g_i(d_1s_1,\dots,d_ns_n)
= g_i( D (s)) ,
\end{align*}
where $g=(g_1,\dots,g_n)$ and $D=(D_1,\ldots,D_n)$ are the maps defined by 
\[
g_i(X):= \sum_{\alpha\neq 0} g_{i,\alpha} X^{\alpha},
\qquad\quad
D_i(X_1,\ldots,X_n) := d_i X_i.
\]
In short, we have 
\begin{equation}
\label{eq:equalPowerSeries}
E = gD.
\end{equation}

We now use the branching property to derive a functional equation for $E$.
\begin{lemma} \label{lem:funktionell}
Suppose that $h=(h_1,\dots,h_n)$, with $h_i(X_1,\ldots,X_n) := \sum_{ \alpha \neq 0} h_{i,\alpha} X^\alpha$, satisfies \eqref{eq:GWo}.
Then, the leaf type generating function $E =(E_1,\ldots,E_n)$ satisfies the functional equation
\begin{align} \label{eq:mahler}
E = hE + D.
\end{align}
\end{lemma}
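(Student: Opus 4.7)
The plan is to derive the functional equation by decomposing a Galton--Watson tree $\mathcal{T}\in\ordS_{i,*}$ according to the offspring multi-index $\beta := \mu(v_{\mathrm{root}})$ of its root, and then exploiting the branching property described at the end of Section~\ref{subsec:GW}. I would treat the case $\beta=0$ separately and absorb all other cases into a single sum, which should give precisely $D$ and $hE$ respectively.

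First I handle the contribution from $\beta=0$: with probability $h_{i,0}=d_i$ the root is itself a leaf of type $i$, in which case $\mathcal{T}$ is automatically finite and $\mathrm{Leaftype}(\mathcal{T})=\mathbf{e}_i$. This event contributes $d_i s_i = D_i(s)$ to $E_i(s)$. Next, for each $\beta\neq 0$, I condition on $\mu(v_{\mathrm{root}})=\beta$ and appeal to the branching property: the $|\beta|$ subtrees hanging off the root are independent multi-type Galton--Watson trees, with exactly $\beta_j$ of them rooted at a vertex of type $j$. Since $\mathcal{T}$ is finite if and only if each of these subtrees is finite, and $\mathrm{Leaftype}(\mathcal{T})$ equals the sum of the leaf types of the subtrees, independence gives
\[
\mathbf{E}_i^h\bigl[ \mathbf{1}_{\{\mathcal{T}\text{ finite}\}}\, s^{\mathrm{Leaftype}(\mathcal{T})} \,\big|\, \mu(v_{\mathrm{root}})=\beta \bigr]
= \prod_{j=1}^n E_j(s)^{\beta_j}
= E(s)^\beta .
\]

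Multiplying by $h_{i,\beta}$ and summing over $\beta\in\mathbb{Z}_{\geq 0}^n$, and separating off the $\beta=0$ term, then yields
\[
E_i(s) = D_i(s) + \sum_{\beta \neq 0} h_{i,\beta} E(s)^\beta = D_i(s) + h_i(E(s)),
\]
which is the $i^{\text{th}}$ component of the claimed identity $E = hE + D$ (interpreting $hE$ as composition, consistently with the convention used to write $E=gD$ in \eqref{eq:equalPowerSeries}).

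The argument is essentially a textbook application of the branching property, so I do not anticipate any real obstacle. The only mild point of care is the interchange of expectation and the infinite sum over $\beta$, together with the convergence of the resulting composition $h_i(E(s))$. This is justified by dominated convergence in a complex neighbourhood of the origin: since $(h_{i,\alpha})_\alpha$ is a probability distribution under assumption~\eqref{eq:GWo}, one has $|E_j(s)|\leq 1$ for $s\in[0,1]^n$, and standard analytic continuation extends this bound to a polydisc around $0\in\mathbb{C}^n$ on which every series in sight converges absolutely.
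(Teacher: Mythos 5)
Your proof is correct and follows essentially the same route as the paper: the paper conditions on the generation-one population vector $Z(1)$, which is exactly the root offspring multi-index $\mu(v_{\mathrm{root}})$ you condition on, then separates the $\alpha=0$ case (yielding $D$) and applies the branching property for $\alpha\neq 0$ (yielding $hE$). The only cosmetic difference is notation ($Z(1)$ versus $\mu(v_{\mathrm{root}})$) and that the paper does not spell out the convergence remark you add at the end.
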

\begin{proof}
Recall the multi-type Galton--Watson process $(Z(k))_{k \geq 0}$ associated with a multi-type Galton--Watson tree and defined in \eqref{eq:process}. 

By conditioning on $Z(1)$, we have
\begin{align} \label{eq:O0}
E_i(s_1,\ldots,s_n)
&= \sum_{ \alpha \in {\mathbb{Z}_{\geq 0}^n}} \mathbf{P}_i^h(Z(1) = \alpha) \,
\mathbf{E}_i^h\left[ \mathbf{1}_{\{\mathcal{T} \text{ finite}\}} s^{\mathrm{Leaftype}(\mathrm{\mathcal{T}})} \,\middle|\, Z(1) = \alpha \right].
\end{align}
Now, on the one hand, in the case $\alpha = 0$ we have 
\begin{align} \label{eq:O1}
\mathbf{E}_i^h\left[ \mathbf{1}_{\{\mathcal{T} \text{ finite}\}} s^{\mathrm{Leaftype}(\mathrm{\mathcal{T}})} \,\middle|\, Z(1) = 0 \right] = s_i.
\end{align}
On the other hand, according to the branching property, for $\alpha \neq 0$ we have
\begin{align} \label{eq:O2}
\mathbf{E}_i^h\left[ \mathbf{1}_{\{\mathcal{T} \text{ finite}\}} s^{\mathrm{Leaftype}(\mathrm{\mathcal{T}})} \,\middle|\, Z(1) = \alpha \right] = \prod_{j=1}^n E_j(s_1,\ldots,s_n)^{\alpha_j}  =: E(s_1,\ldots,s_n)^\alpha.
\end{align}
Using \eqref{eq:O1} and \eqref{eq:O2} in \eqref{eq:O0} we obtain
\begin{align} \label{eq:branching}
E_i(s_1,\ldots,s_n) = \sum_{\alpha \neq 0} h_{i,\alpha}E(s_1,\ldots,s_n)^\alpha + d_i s_i = h_i(E(s_1,\ldots,s_n)) + d_i s_i,
\end{align}
and thus, \eqref{eq:mahler} holds.
\end{proof}

\begin{proof}[Proof of Theorem \ref{thm:inversion}]
Let $F\colon\mathbb{C}^n \to \mathbb{C}^n$ be a map with components of the form \eqref{eq:components}.
Since \eqref{eq:as1} guarantees that the matrix $I-A$ is invertible (with inverse given by $\sum_{k=0}^\infty A^k$), the Jacobian matrix of $F$ at the origin
\begin{align*}
\left( \frac{\partial}{\partial X_j } F_i(X_1,\ldots,X_n) \right)\Big|_{(X_1,\ldots,X_n) = 0} = \delta_{i,j} - h_{i,\mathbf{e}_j} 
= (I-A)_{i,j}
\end{align*}
is invertible.
By the implicit function theorem (see e.g.\ \cite[Section 1.3]{taylor}), $F$ has an analytic inverse $F^{-1}\colon\mathbb{C}^n \to \mathbb{C}^n$ that satisfies $F^{-1}(0) = 0$ and converges in some neighbourhood of the origin.
Thus, we may write $F^{-1}_i(X_1,\ldots,X_n) = \sum_{\alpha \neq 0 } c_{i,\alpha} X^\alpha$, for certain complex coefficients $c_{i,\alpha}$.
We need to show that $F^{-1}=g$, i.e.\ that $c_{i,\alpha}=g_{i,\alpha}$ is the coefficient defined in~\eqref{eq:Gco}, for all $i$ and $\alpha$.

By considering the higher order derivatives of $I = F \circ F^{-1}$ (or, say, using the multivariate Fa\`a di Bruno formula \cite{JP}), one can see that the $X^\alpha$ coefficient $c_{i,\alpha}$ of $F^{-1}_i$ may be expressed as a polynomial in the coefficients of $F$ and $I-A$ of degree at most $|\alpha|$.
Notice also that, if $(h_{i,\alpha}\colon i \in [n], \alpha \in \mathbb{Z}_{\geq 0}^n \setminus \{0\})$ satisfy \eqref{eq:GWo}, then $A := (h_{i,\mathbf{e}_j})_{1 \leq i,j \leq n}$ is a strictly sub-stochastic matrix (i.e., it has nonnegative entries and row sums strictly less than $1$), and accordingly $I - A$ is invertible and \eqref{eq:as1} holds.
It follows that, in order to characterise $c_{i,\alpha}$ as a polynomial in the variables $(h_{j,\beta} \colon j \in [n], \beta\neq 0, |\beta| \leq |\alpha|)$, it is sufficient, by analytic continuation, to do so under the constraint \eqref{eq:GWo}.
Thus, for the remainder of the proof, we may assume \eqref{eq:GWo} holds.

Under the assumption \eqref{eq:GWo}, by \eqref{eq:mahler}, we have $E = hE+D$, i.e.\ $(I-h)E = D$.
By \eqref{eq:equalPowerSeries}, we also have $E= gD$, so that 
\begin{align*}
(I-h)gD = D.
\end{align*}
Since $d_i > 0$ for all $i$, we have that $D$ is invertible and we can compose on the right by $D^{-1}$ to obtain 
\begin{align*}
(I-h)g=I.
\end{align*}
This proves that $g$ is a {right} inverse of $F=I-h$.
Since the set of power series {mappings} in $n$ variables with invertible linear term is a group under composition, $g$ is also a left inverse, so that $g=F^{-1}$, as desired.
\end{proof}

\subsection{Inversion formula for unordered trees}
\label{subsec:inversionUnordered}

We now give an analogue of Theorem \ref{thm:inversion} for power series with the $\alpha!$ in the coefficients (see \eqref{eq:alphaFact}).
The modified formulas involve unordered trees (in $\mathbb{S}_{i,\alpha}$) rather than ordered trees (in $\ordS_{i,\alpha}$).

\begin{thm} \label{thm:inversion2}
 Let $F\colon\mathbb{C}^n \to \mathbb{C}^n$ be a power series mapping with components of the form
\begin{equation}
\label{eq:powerSeries}
F_i(X_1,\ldots,X_n) := X_i - \sum_{\alpha\neq 0}\frac{H_{i,\alpha}}{\alpha!} X^\alpha,
\end{equation}
which converge in a neighbourhood of $0$. 
Suppose that the series $\sum_{k=0}^{\infty} A^k$ converges, with $A_{i,j} = H_{i,\mathbf{e}_j}/j!$. 
Then, $F$ has an analytic inverse given by $F^{-1}_i(X_1,\ldots,X_n) := \sum_{\alpha\neq 0} g_{i,\alpha}X^\alpha$, where
\begin{align} \label{eq:june}
g_{i,\alpha} := \sum_{ \mathcal{T} \in \mathbb{S}_{i,\alpha}} \prod_{\substack{v \text{ internal} \\ \text{vertex of } \mathcal{T}}} \frac{H_{\tau(v),\mu(v)}}{|\mu(v)|!}.
\end{align}
Here, $|\mu(v)|$ is the total number of children of an internal vertex $v$.
\end{thm}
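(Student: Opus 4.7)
The plan is to derive Theorem~\ref{thm:inversion2} as a direct corollary of Theorem~\ref{thm:inversion} via a coefficient rescaling paired with a combinatorial re-indexing that converts the sum over ordered trees into a sum over unordered ones.

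The first step is to set $h_{i,\alpha} := H_{i,\alpha}/\alpha!$ for every $i\in[n]$ and $\alpha\in\mathbb{Z}_{\geq 0}^n\setminus\{0\}$. Under this substitution, the components of $F$ take exactly the form required by Theorem~\ref{thm:inversion}; moreover, since $\mathbf{e}_j! = 1$, we have $h_{i,\mathbf{e}_j} = H_{i,\mathbf{e}_j}$, so the convergence condition~\eqref{eq:as1} reduces to the hypothesis already imposed on $\sum_{k\geq 0} A^k$. Theorem~\ref{thm:inversion} then delivers an analytic inverse whose $i$-th component has $X^\alpha$-coefficient
\[
\tilde{g}_{i,\alpha} \;=\; \sum_{\mathcal{T}\in\ordS_{i,\alpha}}\; \prod_{v\text{ internal}} h_{\tau(v),\mu(v)} \;=\; \sum_{\mathcal{T}\in\ordS_{i,\alpha}}\; \prod_{v\text{ internal}} \frac{H_{\tau(v),\mu(v)}}{\mu(v)!}.
\]

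What remains is the purely combinatorial identity $\tilde g_{i,\alpha} = g_{i,\alpha}$, where $g_{i,\alpha}$ is as defined in~\eqref{eq:june}. Our approach is to introduce the canonical ``sort by type'' map $\pi\colon \mathbb{S}_{i,\alpha} \to \ordS_{i,\alpha}$ which, at each vertex, rearranges the children in ascending order of type while preserving the relative order within each type class. A short combinatorial argument will show that every fiber of $\pi$ has cardinality $\prod_v \binom{|\mu(v)|}{\mu(v)} = \prod_v |\mu(v)|!/\mu(v)!$, since each element of a given fiber corresponds to a choice of interleaving of the $n$ type-homogeneous blocks of children at each internal vertex. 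Because the weight attached to any $\mathcal{T}\in\mathbb{S}_{i,\alpha}$ in~\eqref{eq:june} differs from the weight of its image $\pi(\mathcal{T})\in\ordS_{i,\alpha}$ in the formula for $\tilde g_{i,\alpha}$ by exactly the factor $\prod_v \mu(v)!/|\mu(v)|! = 1/|\pi^{-1}(\pi(\mathcal{T}))|$, grouping the terms of $g_{i,\alpha}$ along the fibers of $\pi$ immediately produces $g_{i,\alpha} = \tilde g_{i,\alpha}$.

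We do not anticipate a substantial obstacle here: once the fiber decomposition is in place the remainder of the argument is essentially accounting. The one delicate point will be verifying the fiber-size claim carefully, which reduces to the standard identity that the number of ways to interleave $n$ disjoint ordered sequences of sizes $\mu(v)_1,\ldots,\mu(v)_n$ into a single ordered sequence of length $|\mu(v)|$ equals the multinomial coefficient $\binom{|\mu(v)|}{\mu(v)}$.
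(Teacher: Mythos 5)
Your proposal is correct and takes essentially the same approach as the paper: substitute $h_{i,\alpha} := H_{i,\alpha}/\alpha!$ into Theorem~\ref{thm:inversion}, then pass from the sum over ordered trees to the sum over unordered trees by replacing $\mu(v)!$ with $|\mu(v)|!$ in the denominators. The paper compresses this re-indexing into two brief sentences; your explicit fiber-counting via the type-sorting map is precisely the bookkeeping the paper leaves implicit.
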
 

\begin{proof}
Setting $h_{i,\alpha} := H_{i,\alpha}/\alpha!$ and using Theorem \ref{thm:inversion}, we see that the coefficients of $F^{-1}_i$ are
\begin{align*}
g_{i,\alpha} = \sum_{\mathcal{T} \in \ordS_{i,\alpha} } \prod_{\substack{v \text{ internal} \\ \text{vertex of } \mathcal{T}}} \frac{H_{\tau(v),\mu(v)}}{\mu(v)!}. 
\end{align*}
Note now that, given a vertex $v$ with $\mu(v)$ children, there are $\mu(v)!$ unordered ways of ordering these children, and $|\mu(v)|!$ ordered ways.
Replacing these factors yields the result.                                     
\end{proof}

Note that the inversion formula \eqref{eq:Gsum} is a consequence of Theorem \ref{thm:inversion2}. Indeed, if $H_{i,\alpha}$ is nonzero only when $|\alpha|=d$, it follows that from \eqref{eq:june} that we may write
\begin{align} \label{eq:june}
g_{i,\alpha} := \frac{1}{(d!)^k} \sum_{ \mathcal{T} \in \mathbb{S}_{i,\alpha}, \text{ $d$-ary}} \prod_{\substack{v \text{ internal} \\ \text{vertex of } \mathcal{T}}} H_{\tau(v),\mu(v)}.
\end{align}
Now note that a sum over all $d$-ary unordered labelled trees $\mathcal{T}$ in $\mathbb{S}_{i,\alpha}$ 
is equivalent to a sum over all $(i,\alpha)$ labellings of a $d$-Catalan tree. Thus we obtain \eqref{eq:Gsum}.

\section{Keller property and combinatorial nilpotency} \label{sec:Keller}

In this section we provide a more complete discussion of the combinatorial approach to the Jacobian conjecture, as outlined in Section \ref{sec:combapproach}.

Recall from Section \ref{subsec:jacobianIntro} that a Keller mapping is a polynomial mapping whose Jacobian determinant equals a non-zero complex constant. As mentioned in the introduction,
Theorem~\ref{thm:inversion2} conveys a combinatorial approach to the Jacobian conjecture: in order to prove the Jacobian conjecture, it is sufficient to verify that whenever $F={I}-H$ is a Keller mapping with components of the form~\eqref{eq:powerSeries}, the inverse coefficients $g_{i,\alpha}$ defined in \eqref{eq:june} are zero for all but finitely many $\alpha$. 

At this stage we may reduce our combinatorial task substantially further by appealing to the celebrated Bass--Connell--Wright \cite{BCW} reduction of the Jacobian conjecture. Let the \textbf{degree} 
of a polynomial mapping be the maximum of the degrees of its components.
A polynomial mapping is said to be \textbf{homogeneous} of degree $d$ if each of its components is homogeneous of degree $d$.

\begin{thm}[Bass, Connell and Wright \cite{BCW}]
\label{thm:BassConnellWright}
Suppose there exists $d \geq 3$ with the following property:
for all $n$, every polynomial mapping of the form $F = {I} -H\colon\mathbb{C}^n \to \mathbb{C}^n$ with $H$ homogeneous of degree $d$ and $JH$ nilpotent, is a polynomial automorphism.
Then the Jacobian conjecture holds in full generality.
\end{thm}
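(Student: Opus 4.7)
The plan is to lift an arbitrary Keller mapping $F$ to a larger ambient space on which the special hypotheses on $H$ are met, and then to transfer polynomial invertibility back; I would organise this reduction in three stages. First, an affine normalisation: given a Keller mapping $F\colon\mathbb{C}^n\to\mathbb{C}^n$, I would pre- and post-compose with affine automorphisms of $\mathbb{C}^n$ to arrange $F(0)=0$ and $JF(0)=I$. Affine automorphisms are themselves polynomial automorphisms and preserve the Keller property, so this step is costless. Writing $F = I - H$ and decomposing $H = H_2 + H_3 + \cdots + H_{d'}$ into homogeneous pieces, $H$ has no constant or linear part, and its total degree $d'$ may be larger or smaller than $d$.

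The second and substantial stage is homogenisation to degree exactly $d$. I would construct a polynomial mapping $\widetilde F = I - \widetilde H$ on a higher-dimensional space $\mathbb{C}^{n+N}$, with $\widetilde H$ homogeneous of degree $d$, satisfying: (i) $\widetilde F$ is Keller whenever $F$ is; (ii) $F$ is a polynomial automorphism if and only if $\widetilde F$ is. The construction introduces auxiliary variables $Y_1, \ldots, Y_N$ and multiplies each lower-degree piece $H_k(X)$ by a suitable monomial in the $Y_j$ to promote it to total degree $d$; the auxiliary coordinates of $\widetilde F$ are themselves arranged so that restricting to an appropriate affine slice of $\mathbb{C}^{n+N}$ recovers $F$, and so that, in any hypothetical polynomial inverse of $\widetilde F$, the $Y$-variables can be eliminated by back-substitution to yield a polynomial inverse for $F$.

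The third stage is to verify nilpotency of $J\widetilde H$. Once $\widetilde H$ is homogeneous of degree $d$ and $\det J\widetilde F = c$ is a nonzero constant, $J\widetilde H$ is homogeneous of degree $d-1$, so substituting $(X,Y)\mapsto (tX,tY)$ in $\det(I - J\widetilde H) = c$ yields $\det(I - t^{d-1} J\widetilde H) = c$ for every $t \in \mathbb{C}$. Since this amounts to $\det(I - s\, J\widetilde H) = c$ for infinitely many $s$, and both sides are polynomials in $s$, it holds for all $s \in \mathbb{C}$; this forces every non-leading coefficient of the characteristic polynomial of $J\widetilde H(X,Y)$ to vanish, and hence $J\widetilde H$ is nilpotent as an element of $M_{n+N}(\mathbb{C}[X,Y])$. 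The hypothesis of the theorem now applies to $\widetilde F$, and property (ii) above transfers polynomial invertibility back to $F$.

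The main obstacle I foresee is the homogenisation in the second stage: one must specify the lift $\widetilde F$ explicitly and verify property (ii), which is the delicate direction amounting to showing that the auxiliary coordinates can be genuinely eliminated from a hypothetical polynomial inverse of $\widetilde F$. Stages one and three are essentially formal once this explicit construction is in hand.
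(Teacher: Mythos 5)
The paper states Theorem \ref{thm:BassConnellWright} without proof, merely citing \cite{BCW}, so there is no paper-internal argument to compare against; I can only assess your sketch on its own terms. Your three-stage outline does follow the shape of the Bass--Connell--Wright reduction, and your third stage is correctly carried out: from $\det(I - J\widetilde H) \equiv c$ together with the homogeneity of $\widetilde H$ one gets $\det(I - s\, J\widetilde H) \equiv c$ for all $s$, hence $c = 1$ (take $s = 0$) and the characteristic polynomial of $J\widetilde H$ is $\lambda^{n+N}$, so $J\widetilde H$ is nilpotent.

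The genuine gap is in stage two, and it is more than just an unsupplied construction. You propose introducing auxiliary variables $Y_1,\dots,Y_N$ and ``multiplying each lower-degree piece $H_k(X)$ by a suitable monomial in the $Y_j$ to promote it to total degree $d$.'' This handles only the pieces with $k < d$; you yourself observe that $\deg H$ ``may be larger or smaller than $d$,'' yet multiplication by a $Y$-monomial can only raise degree, never lower it. The actual Bass--Connell--Wright reduction first \emph{lowers} the degree by iterated variable substitution: for a problematic monomial $X^\beta$ one introduces a new coordinate $Y_\beta$ together with a new component of the map of the form $Y_\beta - (\text{degree-}2\text{ fragment of }X^\beta)$, and replaces high-degree monomials by mixed monomials in $X$ and $Y$ of lower total degree; iterating brings the whole system to degree at most $3$, and only then does one homogenize (this is also why BCW's statement fixes $d = 3$ rather than an arbitrary $d$). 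Moreover, neither property (i) (preservation of the Keller condition) nor property (ii) (equivalence of polynomial invertibility) is automatic for a casually chosen lift; you flag (ii) as the delicate point, but (i) is equally non-trivial — e.g. for the naive lift $(X,Z) \mapsto (X - Z H_2(X) - H_3(X), Z)$ the Jacobian determinant is $\det(I - Z\, JH_2 - JH_3)$, which is not visibly constant even if $\det(I - JH_2 - JH_3)$ is. So the plan has the right architecture, but its central step is both unexecuted and, as described, directed the wrong way for the harder half of the degree adjustment.
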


We may then restrict from now on to polynomial mappings $F=I-H$, where
\begin{equation}
\label{eq:homogeneousMap}
H\colon \mathbb{C}\to \mathbb{C}, \qquad
H_i(X_1,\ldots,X_n) = \sum_{ |\alpha| = d } \frac{H_{i,\alpha}}{\alpha!} X^\alpha \qquad \text{for all } i=1,\dots, n.
\end{equation}
We canonically associate $F$ (or $H$) with the coefficients $H = (H_{i,\alpha}\colon i \in [n], |\alpha| =d)$.
Theorem \ref{thm:BassConnellWright} also tells us that we should focus on mappings of this form with $JH$ nilpotent. 
In fact, for such mappings, the condition that $JH$ is nilpotent is equivalent to Keller's condition that $\det(JF) = 1$ (see \cite[Section 2.5]{JP}).

In this setting, we now work towards giving an even more explicit combinatorial statement of the Jacobian conjecture, with the Keller property (in this case appearing as the nilpotency of $JH$) also put in combinatorial terms, and more precisely in terms of shuffle classes of $d$-Catalan trees.
Strictly speaking, such a combinatorial interpretation is not new, and variations of it have appeared in numerous places \cite{ JP,singer1,singer2, wright05a}.
In particular, we generalise from $\mathcal{C}_k^{(2)}$ to $\mathcal{C}_k^{(d)}$ tools developed by Singer in \cite{singer1,singer2}, providing concise proofs.

\subsection{Shuffling $d$-Catalan trees} \label{subsec:shuffles}


{We begin by defining shuffle classes.}
Roughly speaking, a length-$p$ shuffle class is a collection of trees that may be obtained from one another by shuffling the subtrees coming off a fixed length-$p$ path of descendant vertices.

Recall from the introduction that, for $k \geq 1$, $\mathcal{C}_k^{(d)}$ denotes the set of $d$-Catalan trees {with $k$ internal vertices; such trees have $dk+1$ vertices in total, and hence $(d-1)k+1$ leaves.}
An \textbf{(ancestral) path} of length $p$ in a $d$-Catalan tree is a sequence of vertices $(v_0,\ldots,v_p)$ such that each $v_i$ is a child of $v_{i-1}$.
The \textbf{height} of a vertex $v$ is its graph distance from the root.
Each vertex $v$ of height at least $p$ is associated to a unique path $(v_0,\ldots,v_p)$ of length $p$ ending with $v = v_p$.

\begin{df}[Shuffle class] \label{df:shuffle}
{Let $T\in\mathcal{C}_k^{(d)}$.}
Let $p \geq 1$ be an integer and assume that the vertex $v$ of $T$ has height at least $p$.
Let $(v_0,\ldots,v_p)$ be the unique path associated with $v = v_p$.
For each $i \geq 1$, $v_i$ has $d-1$ siblings $w_{i,1},\ldots,w_{i,d-1}$; each of these siblings $w_{i,j}$ subtends a subtree $W_{i,j}$, which is itself a $d$-Catalan tree with fewer leaves than $T$.
The \textbf{shuffle class} {$\mathrm{Sh}(T;v_0,\ldots,v_p)$} of the path $(v_0,\ldots,v_p)$ is the subset of $\mathcal{C}_k^{(d)}$ consisting of all Catalan trees $T'$ that may be obtained from $T$ by {reshuffling the $(d-1)p$ subtrees $W_{i,j}$ across all $p$ generations}.
See Figure \ref{fig:shuffle} for an illustration.
\end{df}

Note that a shuffle class $\mathrm{Sh}(T; v_0,\ldots,v_p)$ may contain up to $((d-1)p)!$ different trees in $\mathcal{C}_k^{(d)}$.
However, if some of the subtrees $W_{i,j}$ are the same, then the shuffle class will contain {less than $((d-1)p)!$} {elements}.
It is even possible that all the subtrees $W_{i,j}$ are identical; in this case, the shuffle class will consist of exactly one tree.

If a shuffle class contains more than one element, it can be represented in different ways.
Indeed, if $T' \in \mathrm{Sh}(T; v_0,\ldots,v_p)$, then there exists a path $(v_0',\ldots,v_p')$ of vertices in $T'$ such that $\mathrm{Sh}(T; v_0,\ldots,v_p) = \mathrm{Sh}(T'; v'_0,\ldots,v'_p)$.

Recall from Section \ref{subsec:invFormula} that, for $i \in [n]$ and $\alpha \in \mathbb{Z}_{ \geq 0}^n$, $\mathbb{S}_{i,\alpha}$ is the set of (unordered) $n$-type trees $\mathcal{T}=(T,\tau)$ with root type $i$ and $\alpha_j$ leaves of type $j$.
We equivalently say that $\mathcal{T}$ is an \textbf{$(i,\alpha)$ labelling} of $T$.
Recall also that, for any internal vertex $v$, $\mu(v)$ is the multi-index whose $j^{\text{th}}$ component counts the number of type $j$ children of $v$.

Suppose now $T\in\mathcal{C}^{(d)}_k$, so that $|\mu(v)|=d$ for all internal vertex $v$ of $T$, and the total number of leaves is $|\alpha|=(d-1)k+1$.
Let also $H = (H_{i,\alpha}\colon i \in [n], \alpha \in \mathbb{Z}_{\geq 0}^n , |\alpha| = d)$ be a collection of coefficients in $\mathbb{C}$.
We define the \textbf{$H$-weight} $\mathcal{E}_H(\mathcal{T})$ of the labelling $\mathcal{T}$ of $T$ as
\begin{align} \label{eq:blender}
\mathcal{E}_H(\mathcal{T}) := \prod_{\substack{v \text{ internal} \\ \text{vertex of } \mathcal{T}}} H_{\tau(v),\mu(v)}.
\end{align}
\begin{figure}
  \centering   
\scalebox{.8}{
 \begin{forest}
[1, fill=c1
[1,  fill=c1
[2, fill=c2]
[4,fill=c4]
[1,fill=c1]
]
[2, fill=c2
[2,fill=c2]
[3,  fill=c3
[3, fill=c3]
[1,  fill=c1]
[2, fill=c2]
]
[3,  fill=c3]
]
[4, fill=c4
]
]
\end{forest}
}  
\captionsetup{width=.85\linewidth}
\caption{An $(i,\alpha)$ labelling $\mathcal{T}$ of a tree $T\in\mathcal{C}_k^{(d)}$, with $d=3$, $n=4$, $i=1$, $k=4$, and $\alpha=(2,3,2,2)$.
The $H$-weight of the depicted labelled tree $\mathcal{T}$ is $\mathcal{E}_H(\mathcal{T}) = H_{1,(1,1,0,1)}^2 H_{2,(0,1,2,0)} H_{3,(1,1,1,0)}$.}\label{fig:mbt1}
\end{figure}
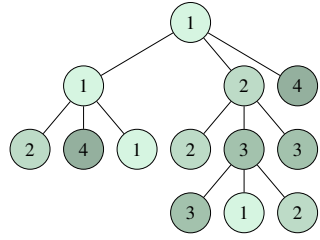
See Figure \ref{fig:mbt1} for an example.
We also define the \textbf{average $H$-weight} of $T$ as
\begin{align} \label{eq:F}
E_{i,\alpha,H}(T) := \sum_{ \text{$(i,\alpha)$ labellings $\mathcal{T}$ of $T$}} \mathcal{E}_H(\mathcal{T}).
\end{align}

The value of defining shuffle classes lies in the following lemma, which states that the nilpotency of $JH$ guarantees that sums of average $H$-weights over shuffle classes are zero.

\begin{lemma}[Shuffle lemma] \label{lem:nilp}
Let $H$ be of the form~\eqref{eq:homogeneousMap}. 
Suppose that the Jacobian matrix $JH$ of $H$ satisfies $(JH)^p = 0$ for some integer $1 \leq p \leq n$.
Then
\begin{align} \label{eq:Nsum} 
\sum_{ T \in \mathcal{S} }  E_{i,\alpha,H}(T)=0
\qquad\text{for all } i\in[n] \text{ and } |\alpha|=(d-1)k+1
\end{align}
and for every shuffle class $\mathcal{S}$ of $\mathcal{C}_k^{(d)}$ associated with a path of length $p$.
\end{lemma}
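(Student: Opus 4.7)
My plan is to expand the double sum $S := \sum_{T \in \mathcal{S}} E_{i,\alpha,H}(T)$ into a sum over fully labelled configurations, and then to recognize that the combined sum over shuffles of the off-spine subtrees and over spine-interior types is exactly a multilinearization of the matrix identity $(JH)^p = 0$.

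Fix any representative $T \in \mathcal{S}$ with spine $(v_0,\ldots,v_p)$, off-spine siblings $w_{m,j}$ subtending subtrees $W_{m,j}$ (for $m=1,\ldots,p$ and $j=1,\ldots,d-1$), and subtree $W_*$ rooted at $v_p$; the portion of $T$ above $v_0$ is shuffle-invariant and factors out. Any pair $(T', \mathcal{T}')$ with $T' \in \mathcal{S}$ and $\mathcal{T}'$ an $(i,\alpha)$-labelling of $T'$ is uniquely encoded by: spine types $t_0,\ldots,t_p$; a root-type-$t_p$ labelling $\mathcal{W}_*$ of $W_*$; and, for each position $w_{m,j}$, a labelled subtree $\mathcal{W}_{(m,j)}$ whose multiset of unlabelled shapes equals $\{W_{m,j}\}$. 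The $H$-weight factors as a (shuffle-invariant) above-$v_0$ factor times $\prod_{m=1}^p H_{t_{m-1}, \mu_m}\cdot \mathcal{E}_H(\mathcal{W}_*)\cdot\prod_{m,j}\mathcal{E}_H(\mathcal{W}_{(m,j)})$, where $\mu_m = e_{t_m}+\sum_j e_{s_{m,j}}$ with $s_{m,j}$ the root type of $\mathcal{W}_{(m,j)}$. Summing over $T'\in\mathcal{S}$ corresponds to summing over placements of the $(d-1)p$ labelled subtrees into positions; a standard multinomial-overcount argument lifts this, up to the factor $1/\prod_c m_c!$, to a free sum over $\sigma\in S_{(d-1)p}$ that redistributes the root types $s_{m,j}$ across rows.

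To bring in the nilpotency I would introduce the symmetric $d$-multilinear form
\[
\Theta_r(Y^{(1)},\ldots,Y^{(d)}) := \sum_{s_1,\ldots,s_d\in[n]} H_{r,\,e_{s_1}+\cdots+e_{s_d}} \prod_{k=1}^d Y^{(k)}_{s_k},
\]
which by direct computation satisfies $H_r(X)=\Theta_r(X,\ldots,X)/d!$ and $(JH(X))_{r,s}=\Theta_r(e_s,X,\ldots,X)/(d-1)!$, so that the backbone factor reads $H_{t_{m-1},\mu_m}=\Theta_{t_{m-1}}(e_{t_m},e_{s_{m,1}},\ldots,e_{s_{m,d-1}})$. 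Substituting $X=\sum_{\ell=1}^{(d-1)p}y_\ell Y^{(\ell)}$ into the scalar identity $(JH(X))^p_{u,v}=0$ and extracting the coefficient of the squarefree monomial $y_1\cdots y_{(d-1)p}$, while invoking the symmetry of $\Theta$ in its last $d-1$ arguments, yields the multilinearized nilpotency relation
\[
\sum_{k_1,\ldots,k_{p-1}\in[n]}\; \sum_{\pi} \prod_{m=1}^p \Theta_{k_{m-1}}\bigl(e_{k_m},\{Y^{(\ell)}\}_{\ell\in\pi_m}\bigr) = 0,
\]
where $k_0=u$, $k_p=v$, and $\pi$ ranges over ordered partitions of $\{1,\ldots,(d-1)p\}$ into $p$ unordered blocks of size $d-1$. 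Specializing $Y^{(\ell)} := e_{s'_\ell}$ for the root types of a fixed labelling of the $\ell$-th subtree in the multiset $\{W_{m,j}\}$, taking $u:=t_0$, $v:=t_p$, and identifying $(k_m)\leftrightarrow(t_m)$, one recognizes the left-hand side as precisely the joint sum over $\sigma\in S_{(d-1)p}$ and over spine-interior types $t_1,\ldots,t_{p-1}$ of the backbone factor appearing in $S$. Since the leaf-type constraint $|\alpha|=(d-1)k+1$ depends only on the multiset of labelled subtrees (not on their placements nor on the interior spine types), this inner sum vanishes for every fixed choice of the remaining data, and so $S=0$.

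The main delicacy I expect is the combinatorial bookkeeping between the $\mathcal{S}$-sum and the free $S_{(d-1)p}$-sum in the presence of coincident subtrees: one must check that the multinomial factors line up so that the vanishing of the free sum transfers directly to $S$. The matching between the ordered partitions $\pi$ coming from multilinearization and the unordered within-row distributions of root types produced by the shuffle is handled cleanly by the symmetry of $\Theta$ in its last $d-1$ slots, which makes the dictionary between the algebraic identity and the combinatorial sum immediate once it has been set up.
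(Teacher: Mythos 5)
Your proposal is correct, and the core insight is the same as the paper's: the spine contribution is a polarization (multilinearization) of $(JH)^p=0$, and everything else factorizes out. The organization, however, is genuinely different. The paper proceeds in three steps. First, Lemma~\ref{lem:kelp} extracts the algebraic identity from $(JH)^p=0$ by directly expanding $(JH)^p$ and reading off the $X^\beta$-coefficient, producing the multinomial factors $\prod_\ell (d-1)!/\beta^\ell!$. Second, Lemma~\ref{lem:nilp2} is a \emph{labelled} shuffle lemma, where the shuffle class also permits re-typing the interior spine vertices $v_1,\ldots,v_{p-1}$, which matches the sum in Lemma~\ref{lem:kelp} essentially term-by-term. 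Third, the proof of Lemma~\ref{lem:nilp} converts the unlabelled sum $\sum_{T'\in\mathcal S}\sum_{\mathcal T'}$ into a weighted sum over labelled shuffle classes via a factor $N(\mathcal T')$ counting the redundancy of a two-step labelling procedure, and the crucial observation is that $N$ is constant on each labelled shuffle class, so Lemma~\ref{lem:nilp2} applies.

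You bypass the intermediate labelled shuffle lemma and the explicit $N(\mathcal T')$ bookkeeping. By encoding the pairs $(T',\mathcal T')$ directly as tuples of labelled subtrees placed in slots, you reach in one pass the quantity that needs to vanish: the sum over interior spine types $t_1,\ldots,t_{p-1}$ and over slot-assignments of the backbone products $\prod_m H_{t_{m-1},\mu_m}$. And rather than chasing multi-indices, you obtain the vanishing by introducing the multilinear form $\Theta_r$ and extracting the squarefree coefficient of $(JH(\sum_\ell y_\ell Y^{(\ell)}))^p_{u,v}=0$. The symmetry of $\Theta_r$ in its last $d-1$ slots cleanly mediates between ordered partitions $\pi$ (from polarization) and permutations $\sigma$ (from slot-assignments), the discrepancy being the overall factor $((d-1)!)^p$, which is immaterial for a vanishing statement. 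Two small points are worth making explicit in a written-up version: the multiplicities $m_c$ in your factor $1/\prod_c m_c!$ are multiplicities of \emph{labelled} subtrees (not of underlying shapes), and this factor is constant once the multiset $M$ of labelled subtrees is fixed; and the identity you need is $(JH(X))^p_{u,v}=0$ specifically for $u=t_0$, $v=t_p$, with $t_0$ determined by the above-$v_0$ labelling and $t_p$ by the root type of $\mathcal W_*$, both of which are fixed in the relevant inner sum. What the paper's route buys is modularity: Lemma~\ref{lem:kelp} and Lemma~\ref{lem:nilp2} are reused (e.g.\ for Corollary~\ref{coro:nilp2}, the fern identity). What your route buys is a shorter, more self-contained derivation directly from the nilpotency of $JH$, in which the combinatorial bookkeeping of the paper's $N(\mathcal T')$ is absorbed into the polarization argument.
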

In Lemma \ref{lem:nilp} we have isolated the so-called index of nilpotency $p$ from the underlying dimension $n$ of the polynomial mapping $H$.
Of course, if $JH$ is nilpotent, then $(JH)^n=0$ and we may set $p = n$.

We note again that Lemma \ref{lem:nilp} is not strictly new: the general idea is present in the works of Singer \cite{singer1,singer2}, {where an analogous result is proven in the $d=2$ case.}
We will prove Lemma \ref{lem:nilp} in Section \ref{subsec:combNilp}.

\subsection{Algebraic nilpotency}
\label{subsec:algebraicNilp}

Our first step is to express the nilpotency of $JH$ in terms of explicit algebraic equations in the coefficients $H_{i,\alpha}$.

\begin{lemma} \label{lem:kelp}
Let $H$ be of the form~\eqref{eq:homogeneousMap}. 
Then, for any integer $1 \leq p \leq n$, we have $(JH)^p= 0$ if and only if 
\begin{align*}
\sum_{ \beta^1 + \cdots + \beta^p = \beta} {\left(\prod_{\ell = 1}^p \frac{ (d-1)!}{ \beta^\ell!}\right)} \sum_{ k_1,\ldots,k_{p-1} \in [n]} \prod_{\ell = 1}^p  H_{k_{\ell-1},\beta^\ell + \mathbf{e}_{k_\ell}} = 0
\end{align*}
for all $i,j \in [n]$ and $|\beta| = (d-1)p$, where {$k_0:=i$, $k_p:=j$}, and the outer sum is over all $p$-tuples $(\beta^1,\ldots,\beta^p)$ of multi-indices $\beta^\ell$ in $\mathbb{Z}_{\geq 0}^n$ with $|\beta^\ell| = d-1$ for all $\ell$, whose sum is the multi-index $\beta$. 
\end{lemma}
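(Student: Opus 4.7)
The plan is to directly expand $(JH)^p$ as a polynomial matrix and read off the vanishing of its coefficients. Since $H$ is homogeneous of degree $d$, each entry of $JH$ is a homogeneous polynomial of degree $d-1$; hence each entry of $(JH)^p$ is homogeneous of degree $(d-1)p$, and the condition $(JH)^p=0$ is equivalent to the vanishing of every coefficient of $X^\beta$ for $|\beta|=(d-1)p$ in every entry. Setting up the correct multi-index bookkeeping and matching with the expression in the statement is essentially the entire proof.

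The first step is to compute an explicit formula for $(JH)_{i,j}$. Differentiating \eqref{eq:homogeneousMap} and making the substitution $\alpha = \gamma + \mathbf{e}_j$ (with $|\gamma|=d-1$), one uses $\alpha! = (\gamma_j+1)\,\gamma!$ and $\alpha_j = \gamma_j + 1$ so that the factor $\alpha_j/\alpha!$ collapses to $1/\gamma!$. This yields the clean identity
\begin{equation*}
(JH)_{i,j}(X) \;=\; \sum_{|\gamma|=d-1} \frac{H_{i,\gamma+\mathbf{e}_j}}{\gamma!}\, X^{\gamma}.
\end{equation*}

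Next I would compute the $(i,j)$ entry of $(JH)^p$ by iterating matrix multiplication:
\begin{equation*}
\bigl((JH)^p\bigr)_{i,j}(X)
= \sum_{k_1,\dots,k_{p-1}\in[n]} \prod_{\ell=1}^{p} (JH)_{k_{\ell-1},k_\ell}(X),
\end{equation*}
where $k_0:=i$ and $k_p:=j$. Substituting the formula above and expanding the product, one gets a sum over tuples $(\beta^1,\dots,\beta^p)$ of multi-indices with $|\beta^\ell|=d-1$, and collecting by the total multi-index $\beta := \beta^1+\cdots+\beta^p$ (necessarily with $|\beta|=(d-1)p$) one obtains
\begin{equation*}
\bigl((JH)^p\bigr)_{i,j}(X)
= \sum_{|\beta|=(d-1)p} X^\beta \sum_{\beta^1+\cdots+\beta^p=\beta} \sum_{k_1,\dots,k_{p-1}\in[n]} \prod_{\ell=1}^{p} \frac{H_{k_{\ell-1},\beta^\ell+\mathbf{e}_{k_\ell}}}{\beta^\ell!}.
\end{equation*}

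The final step is to observe that $(JH)^p=0$ if and only if each of the above $X^\beta$-coefficients, for every $i,j\in[n]$ and every $|\beta|=(d-1)p$, vanishes. Multiplying such a coefficient by the nonzero factor $((d-1)!)^p$ produces exactly the expression in the statement of the lemma, because $\prod_{\ell=1}^p \frac{(d-1)!}{\beta^\ell!}$ is simply the regrouping of the $1/\beta^\ell!$ factors with the normalising constants. This gives the claimed equivalence.

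The main obstacle is purely notational: keeping the indices $k_0,\dots,k_p$ consistent along the chain and correctly tracking the shift by $\mathbf{e}_{k_\ell}$ produced by each differentiation. There is no conceptual difficulty beyond this bookkeeping, and no probabilistic or analytic input is required — it is a direct computation from the definition of the Jacobian and matrix multiplication.
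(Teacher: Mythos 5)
Your proposal is correct and follows essentially the same route as the paper: differentiate to get the explicit formula for $(JH)_{i,j}$, expand $(JH)^p$ via matrix multiplication indexed by $k_1,\dots,k_{p-1}$, read off the $X^\beta$-coefficients of the resulting homogeneous polynomial, and multiply through by the nonzero constant $((d-1)!)^p$. The bookkeeping with the substitution $\alpha=\gamma+\mathbf{e}_j$ and the collapse $\alpha_j/\alpha! = 1/\gamma!$ matches the paper's computation exactly.
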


\begin{proof}
The $(i,j)^{\text{th}}$ entry of $JH=(\frac{\partial}{\partial X_j} H_i)_{1\leq i,j\leq n}$ is given by the polynomial
\begin{align*}
(JH)_{i,j}(X_1,\ldots,X_n ) = \sum_{|\alpha| = d \colon \alpha_j \geq 1 } \frac{ H_{i,\alpha} \alpha_j }{\alpha!} X^{\alpha - \mathbf{e}_j} = \sum_{ |\beta| = d-1 } \frac{ H_{i,\beta + \mathbf{e}_j}}{ \beta!} X^\beta.
\end{align*}
The $(i,j)^{\text{th}}$ entry of $(JH)^p$ is the polynomial 
\begin{align} \label{eq:kalpa}
((JH)^p)_{i,j}(X_1,\ldots,X_n ):= \sum_{ |\beta^1|,\ldots,|\beta^p|=d-1} \sum_{ k_1,\ldots,k_{p-1} \in [n]} {\prod_{\ell = 1}^p \frac{ H_{k_{\ell-1},\beta^\ell + \mathbf{e}_{k_\ell}}}{ \beta^\ell!} X^{\beta^\ell},}
\end{align}
{where $k_0 := i$, $k_p:= j$, and the outer sum is over all $p$-tuples of multi-indices $\beta^\ell$ in $\mathbb{Z}_{\geq 0}^n$ with $|\beta^\ell| = d-1$ for all $\ell$.}

{Now, we have $(JH)^p = 0$ if and only if \eqref{eq:kalpa} is zero for all $i,j$.
Since $(JH)^p$ is clearly a homogeneous polynomial of degree $(d-1)p$, we have $(JH)^p=0$ if only if  the coefficient of $X^\beta$ in $((JH)^p)_{i,j}(X_1,\ldots,X_n )$ is zero for all $i,j$ and for each $|\beta| = (d-1)p$.
By \eqref{eq:kalpa}, such a coefficient is given by}
\begin{align*}
\sum_{ \beta^1 + \cdots + \beta^p = \beta} \sum_{ k_1,\ldots,k_{p-1} \in [n]} \prod_{\ell = 1}^p \frac{ H_{k_{\ell-1},\beta^\ell + \mathbf{e}_{k_\ell}}}{ \beta^\ell!},
\end{align*}
where the outer sum is over all $p$-tuples $(\beta^1,\ldots,\beta^p)$ of multi-indices $\beta^\ell$ in $\mathbb{Z}_{\geq 0}^n$ with $|\beta^\ell| = d-1$ for all $\ell$, whose sum is $\beta$.
The result then follows by multiplying this coefficient by $((d-1)!)^p$.
\end{proof}

\subsection{Combinatorial nilpotency: labelled version}
\label{subsec:combNilp}


From Lemma \ref{lem:kelp}, we now deduce a `labelled version' of the shuffle lemma (Lemma \ref{lem:nilp}).
We thus begin by defining the notion of labelled shuffle classes.
\begin{df} \label{df:shuffle2}
Let $\mathcal{T}=(T,\tau)$ be an $n$-type tree, where $T\in \mathcal{C}_k^{(d)}$.
Let $(v_0,\ldots,v_p)$ be any length-$p$ path in $\mathcal{T}$.
The \textbf{labelled shuffle class} $\mathrm{Sh}(\mathcal{T};v_0,\ldots,v_p)$ is the set of $n$-type trees $\mathcal{T}'$ that may be obtained from $\mathcal{T}$ by:
\begin{itemize}
\item rearranging the {labelled} subtrees subtended by the $(d-1)p$ siblings of $v_1,\ldots,v_p$;
\item giving arbitrary new types in $\{1,\ldots,n\}$ to the vertices $v_1,\ldots,v_{p-1}$.
\end{itemize}
\end{df}

A labelled shuffle class $\mathrm{Sh}(\mathcal{T};v_0,\ldots,v_p)$ of length $p$ may contain up to $n^{p-1} ((d-1)p)!$ different labelled trees.
If some of the labelled subtrees subtended by the siblings are identical (in both their topology and labelling), then the respective labelled shuffle class will contain fewer elements.

\begin{lemma} \label{lem:nilp2}
Let $H$ be of the form~\eqref{eq:homogeneousMap}.
Suppose that the Jacobian matrix $JH$ of $H$ satisfies $(JH)^p = 0$ for some integer $1 \leq p \leq n$.
Then, for all $n$-type tree $\mathcal{T}=(T,\tau)$, with $T\in \mathcal{C}_k^{(d)}$, and every path $(v_0,\ldots,v_p)$ in $\mathcal{T}$, we have
\begin{align*}
\sum_{ \mathcal{T}' \in \mathrm{Sh}(\mathcal{T};v_0,\ldots,v_p) } \mathcal{E}_H(\mathcal{T}') = 0.
\end{align*}

\end{lemma}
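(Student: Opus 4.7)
The plan is to reduce the shuffle sum to the vanishing identity of Lemma \ref{lem:kelp} by factoring the $H$-weight into an invariant piece and a piece indexed by what changes within the shuffle class.

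First, I would observe that for any $\mathcal{T}' \in \mathrm{Sh}(\mathcal{T};v_0,\ldots,v_p)$, every internal vertex outside $\{v_0,v_1,\ldots,v_{p-1}\}$ retains both its type and its multi-index of child types as in $\mathcal{T}$: the subtrees subtended by the siblings $w_{\ell,s}$ move intact under the shuffle, the vertices outside the focused region are untouched, and $v_p$ (if internal) keeps its type and its children. Setting $k_0 := \tau(v_0)$ and $k_p := \tau(v_p)$ (fixed by $\mathcal{T}$), $k_\ell := \tau'(v_\ell) \in [n]$ for $1 \leq \ell \leq p-1$, and letting $\beta^\ell$ denote the root-type multi-index of the $d-1$ siblings of $v_\ell$ in $\mathcal{T}'$, the weight factors as
\[
\mathcal{E}_H(\mathcal{T}') = \mathcal{C}_\mathcal{T} \prod_{\ell=1}^{p} H_{k_{\ell-1},\,\beta^\ell + \mathbf{e}_{k_\ell}},
\]
with $\mathcal{C}_\mathcal{T}$ a constant depending only on $\mathcal{T}$ and the path, $|\beta^\ell|=d-1$, and $\beta^1+\cdots+\beta^p = \beta$ equal to the fixed total root-type multi-index of all $(d-1)p$ sibling subtrees in $\mathcal{T}$.

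Next, I would enumerate the shuffle class by the choice of $(k_1,\ldots,k_{p-1}) \in [n]^{p-1}$ together with a distribution of the $(d-1)p$ labelled subtrees across the $p$ generations. To bypass the complication that some labelled subtrees may coincide, I would temporarily distinguish all $(d-1)p$ subtrees, count ordered placements into the $(d-1)p$ ordered planar sibling positions, and divide by the symmetry factor $\prod_S m_S!$ (the product over distinct labelled subtrees $S$ of their multiplicities in $\mathcal{T}$) to recover the count of distinct $\mathcal{T}'$. With distinguished subtrees, the number of placements realising a prescribed generation profile $(\beta^1,\ldots,\beta^p)$ is $\frac{\beta!}{\prod_\ell \beta^\ell!} \cdot ((d-1)!)^p$: the multinomial $\beta!/\prod_\ell \beta^\ell!$ distributes the distinguished subtrees of each root type across generations, while the factor $((d-1)!)^p$ orders the $d-1$ subtrees within each generation's ordered sibling positions. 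Assembling these ingredients yields
\[
\sum_{\mathcal{T}' \in \mathrm{Sh}} \mathcal{E}_H(\mathcal{T}') = \frac{\mathcal{C}_\mathcal{T}\,\beta!}{\prod_S m_S!} \sum_{\substack{\beta^1+\cdots+\beta^p=\beta \\ |\beta^\ell|=d-1}} \left(\prod_{\ell=1}^p \frac{(d-1)!}{\beta^\ell!}\right) \sum_{k_1,\ldots,k_{p-1}\in[n]} \prod_{\ell=1}^p H_{k_{\ell-1},\,\beta^\ell+\mathbf{e}_{k_\ell}}.
\]

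The final step is to recognize the right-hand side, up to the nonzero scalar prefactor $\mathcal{C}_\mathcal{T}\beta!/\prod_S m_S!$, as precisely the combinatorial expression that Lemma \ref{lem:kelp} shows to vanish under the hypothesis $(JH)^p = 0$ (applied with $i := k_0$ and $j := k_p$), which concludes the proof. The main delicate step will be the combinatorial bookkeeping of the second paragraph, where one must carefully handle the interplay between ordered planar sibling positions and labelled subtrees with possible repetitions; however, since the precise value of the prefactor is irrelevant, any careful accounting establishing proportionality to the Lemma \ref{lem:kelp} sum suffices.
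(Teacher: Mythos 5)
Your proof is correct and takes essentially the same route as the paper's: factor $\mathcal{E}_H(\mathcal{T}')$ into a shuffle-invariant prefactor times the product $\prod_{\ell=1}^p H_{k_{\ell-1},\beta^\ell+\mathbf{e}_{k_\ell}}$, then reduce the shuffle sum to the vanishing expression of Lemma~\ref{lem:kelp}. Your bookkeeping is in fact a little more careful than the paper's at one point: by temporarily distinguishing the $(d-1)p$ subtrees and dividing by $\prod_S m_S!$, you correctly obtain the prefactor $\mathcal{C}_\mathcal{T}\,\beta!/\prod_S m_S!$, whereas the paper's intermediate identity equating $\sum_{\mathcal{T}'}\mathcal{E}_H(\mathcal{T}')$ with $G_H(\mathcal{T})\sum_{\mathcal{U}'}\mathcal{E}_H(\mathcal{U}')$ holds only up to this same nonzero constant (repeated subtrees with equal root type give distinct $\mathcal{T}'$ but the same $\mathcal{U}'$), which is harmless since the conclusion is that the sum vanishes.
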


\begin{proof}
We can decompose the weight of $\mathcal{T}$ as 
\begin{align*}
\mathcal{E}_H(\mathcal{T}) = \prod_{v \text{ internal}} H_{\tau(v),\mu(v)} = \mathcal{E}_H(\mathcal{R}) \mathcal{E}_H(\mathcal{S}) \left(\prod_{i=1}^p \prod_{j=1}^{d-1} {\mathcal{E}_H(\mathcal{W}_{i,j})}  \right) {\mathcal{E}_H(\mathcal{U})},
\end{align*}
where
\begin{itemize}
\item $\mathcal{R}$ is the labelled subtree obtained from $\mathcal{T}$ by removing any vertex that is a descendant of $v_0$ (but keeping $v_0$ itself);
\item $\mathcal{S}$ is the labelled subtree given by $v_p$ and its descendants;
\item
$\mathcal{W}_{i,j}$ is the labelled subtree given by $w_{i,j}$ and its descendants, where, for each $1 \leq i \leq p$, $w_{i,1},\ldots,w_{i,d-1}$ are the $d-1$ siblings of $v_i$;
\item $\mathcal{U}$ is the labelled subtree consisting of the vertices $v_0,\ldots,v_p$ and the siblings of $v_1,\ldots,v_p$.
\end{itemize}
As a shorthand we write
\begin{align*}
G_H(\mathcal{T}) :=  \mathcal{E}_H(\mathcal{R}) \mathcal{E}_H(\mathcal{S}) \prod_{i=1}^p \prod_{j=1}^{d-1} {\mathcal{E}_H(\mathcal{W}_{i,j})}.
\end{align*}
 
Notice now that any $\mathcal{T}'\in \mathrm{Sh}(\mathcal{T};v_0,\dots,v_p)$ contains the trees $\mathcal{R}$, $\mathcal{S}$, and each of the $\mathcal{W}_{i,j}$.
Thus, its weight differs from $\mathcal{T}$ only in the labelling of $v_1,\ldots,v_{p-1}$ and the rearrangement of the siblings.
Therefore,
\begin{align*}
\mathcal{E}_H(\mathcal{T}') = G_H(\mathcal{T}) \mathcal{E}_H(\mathcal{U}'),
\end{align*}
where $\mathcal{U}'$ is the labelled subtree of $\mathcal{T}'$ consisting of the vertices $v_0,\ldots,v_p$ and the siblings of $v_1,\ldots,v_p$.
In particular,
\begin{align} \label{eq:kohl}
\sum_{ \mathcal{T}' \in \mathrm{Sh}(\mathcal{T};v_0,\ldots,v_p) } {\mathcal{E}_H(\mathcal{T}')} =G_H(\mathcal{T})  \sum_{\mathcal{U}'} \mathcal{E}_H(\mathcal{U}'),
\end{align}
where the sum is over all labelled trees $\mathcal{U}'$ that can be obtained from $\mathcal{U}$ by relabelling $v_1,\ldots,v_{p-1}$ and by rearranging the positions of its leaves {(except $v_p$)}.
Suppose that the total leaf type of $\mathcal{U}$ is $\beta$, i.e.\ the total number of leaves of type $k$ in $\mathcal{U}$ is the $k^{\text{th}}$ component of $\beta$; then, the total leaf type of each $\mathcal{U}'$ is also $\beta$.

Note that each such $\mathcal{U}'$ has {(potentially)} different labels $k_1,\ldots,k_{p-1} \in [n]$ for the vertices $v_1,\ldots,v_{p-1}$.
Note also that, given any $p$-tuple $(\beta^1,\ldots,\beta^p)$ of multi-indices of degree $d-1$ adding to $\beta$, and any $k_1,\ldots,k_{p-1}$ in $[n]$, there are $\prod_{\ell=1}^p \frac{{(d-1)!}}{\beta^\ell!}$ different $\mathcal{U}'$ such that $\tau(v_\ell) = k_\ell$ for $1 \leq \ell \leq p-1$ and {such that the types of the leaves in generation $\ell$ (excluding $v_p$) are encoded by the multi-index $\beta^\ell$}; this follows from the fact that there are $(d-1)!/\beta^\ell!$ different planar ways of rearranging the order of the $d-1$ leaves in generation $\ell$ such that, for each $k$, $\beta^\ell_k$ of them have type $k$.
In short,
\begin{align} \label{eq:legall}
 \sum_{\mathcal{U}'} \mathcal{E}_H(\mathcal{U}') = \sum_{ \beta^1 + \cdots + \beta^p = \beta} {\left(\prod_{\ell = 1}^p \frac{ (d-1)!}{ \beta^\ell!}\right)} \sum_{ k_1,\ldots,k_{p-1} \in [n]} \prod_{\ell = 1}^p  H_{k_{\ell-1},\beta^\ell + \mathbf{e}_{k_\ell}},
\end{align}
{where $k_0:=\tau(v_0)$ and $k_p:=\tau(v_p)$ are fixed.}
Now plug \eqref{eq:legall} into \eqref{eq:kohl}, and use Lemma \ref{lem:kelp} to conclude.
\end{proof}

The latter proof suggests that we may specialise Lemma \ref{lem:nilp2} to a particular kind of trees, which we call $d$-Catalan ferns.
We define a \textbf{$d$-Catalan fern} to be an element of $\mathcal{C}_p^{(d)}$ of height $p$, with a designated `sink' vertex in generation $p$, and the property that at most one vertex in each generation is not a leaf.
We define an \textbf{$(i,j,\alpha)$ labelling} of a $d$-Catalan fern $U$ to be any labelling of $U$ giving the root type $i$, the sink type $j$, and giving exactly $\alpha_\ell$ of the non-sink leaves type $\ell$, for all $\ell\in[n]$. See Figure \ref{fig:mbt2} for an illustration.

By taking $\mathcal{T}$ to be a labelled $d$-Catalan fern of height $p$ in Lemma \ref{lem:nilp2}, we readily obtain the following result:
\begin{cor}[Lemma 2.8 of \cite{JP}] \label{coro:nilp2}
Let $H$ be of the form~\eqref{eq:homogeneousMap}.
Suppose that the Jacobian matrix $JH$ of $H$ satisfies $(JH)^p = 0$ for some integer $1 \leq p \leq n$.
Then, for any $d$-Catalan fern $U$ of height $p$, we have
\begin{equation}
\label{eq:sumFerns}
\sum_{ \text{$(i,j,\alpha)$ labellings $\mathcal{U}$ of $U$}} \mathcal{E}_H(\mathcal{U}) =0 .
\end{equation}
\end{cor}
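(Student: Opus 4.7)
The plan is to deduce this corollary as a direct specialisation of Lemma \ref{lem:nilp2}, by taking the labelled tree $\mathcal{T}$ appearing there to be a labelled $d$-Catalan fern.

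To set this up, I would fix a $d$-Catalan fern $U$ of height $p$. By definition, $U$ has a unique ancestral path $(v_0, v_1, \ldots, v_p)$ from its root $v_0$ to its sink $v_p$, and every other vertex of $U$ (namely, each of the $(d-1)p$ siblings of $v_1, \ldots, v_p$) is a leaf. I would then pick an arbitrary $(i,j,\alpha)$ labelling $\mathcal{U}_0$ of $U$ and consider the labelled shuffle class $\mathrm{Sh}(\mathcal{U}_0; v_0, \ldots, v_p)$ from Definition \ref{df:shuffle2}.

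The key observation is that this shuffle class coincides exactly with the set of all $(i,j,\alpha)$ labellings of $U$. Since each sibling off the path subtends a single leaf, the operation of ``rearranging the labelled subtrees subtended by the $(d-1)p$ siblings of $v_1, \ldots, v_p$'' reduces to permuting the $(d-1)p$ non-sink leaves together with their types; such permutations preserve the leaf-type multiplicities $\alpha_\ell$. The remaining freedom to relabel the interior vertices $v_1, \ldots, v_{p-1}$ arbitrarily in $[n]$, while keeping the root type $i$ and sink type $j$ fixed, matches precisely the constraints defining an $(i,j,\alpha)$ labelling. A short argument in the converse direction shows that every $(i,j,\alpha)$ labelling of $U$ can be reached from $\mathcal{U}_0$ via this combination of operations.

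Having established this identification, I would apply Lemma \ref{lem:nilp2} with $\mathcal{T} = \mathcal{U}_0$ and the path $(v_0, \ldots, v_p)$, invoking the hypothesis $(JH)^p = 0$ to obtain
\[
\sum_{\mathcal{U} \in \mathrm{Sh}(\mathcal{U}_0; v_0, \ldots, v_p)} \mathcal{E}_H(\mathcal{U}) = 0 ,
\]
which, by the bijection just described, is exactly \eqref{eq:sumFerns}. I do not anticipate any genuine obstacle: the entire argument is a routine specialisation, and the only point requiring care is the identification of the shuffle class with the set of $(i,j,\alpha)$ labellings, which follows immediately from unpacking Definition \ref{df:shuffle2} in the fern setting.
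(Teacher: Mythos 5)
Your proof is correct and is essentially the paper's own argument: the paper likewise deduces the corollary by ``taking $\mathcal{T}$ to be a labelled $d$-Catalan fern of height $p$ in Lemma~\ref{lem:nilp2},'' and your identification of the labelled shuffle class $\mathrm{Sh}(\mathcal{U}_0;v_0,\dots,v_p)$ with the set of $(i,j,\alpha)$ labellings of $U$ is precisely the step the paper treats as immediate.
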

Corollary \ref{coro:nilp2} is precisely the third bullet point of the combinatorial approach to the Jacobian conjecture stated in Section \ref{sec:combapproach}.
\subsection{Combinatorial nilpotency: unlabelled version}
\label{subsec:combNilp_unlabelled}

We can now use Lemma \ref{lem:nilp2} to prove Lemma \ref{lem:nilp}.

\begin{proof}[Proof of Lemma \ref{lem:nilp}]
Let $\mathrm{Sh}(T; v_0,\dots,v_p)$ be a shuffle class associated with a path of length $p$, for some fixed $T\in \mathcal{C}^{(d)}_k$.
Let $i\in[n]$ and $|\alpha|=k(d-1)+1$.
We need to show that
\begin{equation}
\label{eq:shuffleLemma_proof}
\sum_{ T' \in \mathrm{Sh}(T; v_0,\dots,v_p) }  \; \sum_{ \substack{\text{$(i,\alpha)$ labellings} \\ \text{$\mathcal{T}'$ of $T'$}}} \mathcal{E}_H(\mathcal{T}')=0 .
\end{equation}
The labelled tree $\mathcal{T}'$ in the expression above is obtained as follows:
\begin{enumerate}[label=(\alph*)]
\item choose $T'\in \mathrm{Sh}(T; v_0,\dots,v_p)$;
\item given such $T'$, choose an $(i,\alpha)$ labelling $\mathcal{T}'$ of $T'$.
\end{enumerate}
It is clear that any such $\mathcal{T}'$ can be obtained in only one way via the above procedure.

Consider now the alternative procedure:
\begin{enumerate}[label=(\alph*')]
\item\label{item:altern1} choose an $(i,\alpha)$ labelling $\mathcal{T}$ of $T$;
\item\label{item:altern2} given such $\mathcal{T}$, choose $\mathcal{T}'\in \mathrm{Sh}(\mathcal{T}; v_0,\dots,v_p)$.
\end{enumerate}
In general, there are several different ways of obtaining any $\mathcal{T}'$ via the alternative procedure.
We then have
\begin{equation}
\label{eq:shuffleLemma_proof2}
\sum_{ T' \in \mathrm{Sh}(T; v_0,\dots,v_p) }  \; \sum_{ \substack{\text{$(i,\alpha)$ labellings} \\ \text{$\mathcal{T}'$ of $T'$}}} \mathcal{E}_H(\mathcal{T}')
= \sum_{ \substack{\text{$(i,\alpha)$ labellings} \\ \text{$\mathcal{T}$ of $T$}}} \; \sum_{ \mathcal{T}' \in \mathrm{Sh}(\mathcal{T}; v_0,\dots,v_p) } \frac{\mathcal{E}_H(\mathcal{T}')}{N(\mathcal{T}')} ,
\end{equation}
where $N(\mathcal{T}')\geq 1$ is the number of ways of obtaining $\mathcal{T}'$ via the second procedure.
Let us now analyse such a number.

For $1\leq i\leq p$, denote by $\mathcal{W}'_{i,j}$, $1\leq j\leq d-1$, the labelled trees subtended by the $d-1$ siblings of $v_i$ in $\mathcal{T}'$.
The greater flexibility provided by the second procedure amounts to two facts: first, in the step \ref{item:altern1} one may choose to label the vertices $v_1,\dots,v_{p-1}$ \emph{arbitrarily}, since these can then be given new types in the labelled tree shuffling of step \ref{item:altern2}; second, if in $\mathcal{T}'$ some of the $\mathcal{W}'_{i,j}$ have identical tree structure but different labellings, then there is more than one way to label such trees in step \ref{item:altern1} so that an appropriate reshuffling in step \ref{item:altern2} yields exactly $\mathcal{T}'$.
More precisely, we have
\[
N(\mathcal{T}') = n^{p-1} \prod_{W \text{ planar tree}} N_W(\mathcal{T}') ,
\]
where $N_W(\mathcal{T}')$ is the multinomial coefficient counting the number of \emph{distinct} ways of rearranging all the $\mathcal{W}'_{i,j}$ that have tree structure $W$ (but potentially different labellings); by convention, $N_W(\mathcal{T}')$ is set to be $1$ when no $\mathcal{W}'_{i,j}$ has tree structure $W$.
Alternatively, we have the formula
\[
N(\mathcal{T}') = \frac{\#\mathrm{Sh}(\mathcal{T}'; v_0,\dots,v_p)}{\#\mathrm{Sh}(T; v_0,\dots,v_p)} ,
\]
where we stress that the numerator is the cardinality of a labelled shuffle class, while the denominator is the cardinality of an unlabelled shuffle class.
It is clear from both the above formulas that $N(\mathcal{T}')$ is constant on all trees $\mathcal{T}'$ in $\mathrm{Sh}(\mathcal{T}; v_0,\dots,v_p)$; equivalently, $N(\mathcal{T}')=N(\mathcal{T})$.
Therefore, \eqref{eq:shuffleLemma_proof2} reads
\[
\sum_{ T' \in \mathrm{Sh}(T; v_0,\dots,v_p) }  \; \sum_{ \substack{\text{$(i,\alpha)$ labellings} \\ \text{$\mathcal{T}'$ of $T'$}}} \mathcal{E}_H(\mathcal{T}')
= \sum_{ \substack{\text{$(i,\alpha)$ labellings} \\ \text{$\mathcal{T}$ of $T$}}} \frac{1}{N(\mathcal{T})} \sum_{ \mathcal{T}' \in \mathrm{Sh}(\mathcal{T}; v_0,\dots,v_p) } \mathcal{E}_H(\mathcal{T}').
\]
We now use Lemma \ref{lem:nilp2} to conclude the proof of \eqref{eq:shuffleLemma_proof}.
\end{proof}

\section{The shuffle conjectures} \label{sec:conjectures}


\subsection{Subtree shuffling conjecture}
\label{subsec:orthogonality}

In this section we prove Theorem \ref{thm:relation} and Theorem \ref{thm:shufflechain}, which state that certain conjectures involving shuffling subtrees of $d$-Catalan trees imply the Jacobian conjecture. While our work so far in Sections \ref{sec:inversion} and \ref{sec:Keller} has been based on reshaping (and finding new approaches to) known results in the literature, in this section we introduce our new and foundational idea based on the orthogonality of indicators of shuffle classes to the average energy function. This converts the problem from one involving weighted trees to a simpler one involving unweighted trees.

The basic idea, which lies at the heart of our approach, is the following.
Recall first that, to prove the Jacobian conjecture, one would need to show that, if $JH$ is nilpotent, the quantity $\Phi_i^\alpha(H)$ defined in \eqref{eq:Gsum} is zero for large $|\alpha|$.
On the other hand, as we showed in Lemma \ref{lem:nilp}, the nilpotency condition implies that variants of the sum occurring in \eqref{eq:Gsum}, with the $T$-sum over a shuffle class $\mathcal{S}\subseteq \mathcal{C}_k^{(d)}$ rather than the whole $\mathcal{C}_k^{(d)}$, are zero.
Putting the two observations together, we conclude: \emph{if} the constant function can be written as a linear combination of indicators of shuffle classes $\mathcal{S}$, then $\Phi_i^\alpha(H)=0$, as desired.

To make the above argument rigorous, let us define an inner product on the set of functions $(\mathcal{C}_k^{(d)})^* := \{ f: \mathcal{C}_k^{(d)} \to \mathbb{C} \}$ by setting
\begin{align*}
\langle f, g \rangle := \sum_{T \in \mathcal{C}_k^{(d)} } \overline{f(T)} g(T).
\end{align*} 
Given integers $k \geq 1$ and $d \geq 2$, the average $H$-weight function $E_{i,\alpha,H}$, defined in \eqref{eq:F}, is an element of $(\mathcal{C}_k^{(d)})^*$.
With this at hand, \eqref{eq:Gsum} reads
\begin{align} \label{eq:Gsum3}
\Phi_i^\alpha(H) = \frac{1}{(d!)^k}  \langle \mathbf{1}, E_{i,\alpha,H} \rangle,
\end{align}
where $\mathbf{1} \in(\mathcal{C}_k^{(d)})^*$ is the constant function defined by $\mathbf{1}(T) := 1$ for every $T$. 

Consider now the indicator function of a shuffle class $\mathcal{S}$ of length $p$, i.e.\ the element $\mathbf{1}_\mathcal{S}$ of $(\mathcal{C}_k^{(d)})^*$ defined by
\begin{align}
\mathbf{1}_\mathcal{S}(T) := 
\begin{cases}
1 \qquad &\text{if $T\in\mathcal{S}$,}\\
0 \qquad &\text{if $T\notin\mathcal{S}$}.
\end{cases}
\end{align}
The (shuffle) Lemma~\ref{lem:nilp} now reads
\begin{align} \label{eq:kalpa0}
(JH)^p = 0 \implies \langle \mathbf{1}_\mathcal{S} , E_{i,\alpha,H} \rangle = 0 \text{ for every length-$p$ shuffle class $\mathcal{S}\subseteq\mathcal{C}_k^{(d)}$.}
\end{align}

We are now able to show that the subtree shuffling conjecture posed in the introduction implies the Jacobian conjecture.
\begin{proof}[Proof of Theorem \ref{thm:relation}]
Fix $d \geq 3$.
Suppose that, for all $p \geq 1$, there exists some $k_0(d,p)$ such that, for all $k \geq k_0(d,p)$, $\mathbf{1} = \sum_{ \mathcal{S} } \lambda_S \mathbf{1}_S$ can be written as a linear combination of the length-$p$ shuffle indicators, for some constants
\[
\{ \lambda_\mathcal{S}\colon \mathcal{S} \text{ is a length-$p$ shuffle class in $\mathcal{C}_k^{(d)}$} \} .
\]
Then, using the conjugate linearity of $\langle \cdot, \cdot \rangle$ in the first slot, we have 
\begin{align*}
\langle \mathbf{1}, E_{i,\alpha,H} \rangle = \sum_{ \mathcal{S} } {\overline{\lambda_\mathcal{S}}} \langle \mathbf{1}_\mathcal{S} , E_{i,\alpha,H} \rangle .
\end{align*}
If $(JH)^p = 0$, then, according to \eqref{eq:kalpa0}, we have $\langle \mathbf{1}_\mathcal{S} , E_{i,\alpha,H} \rangle = 0$ for every length-$p$ shuffle class $\mathcal{S}$.
It follows from \eqref{eq:Gsum3} that $\Phi_i^\alpha(H) = 0$ for all $i\in [n]$ and {$|\alpha| = (d-1)k+1$}. 

Set now $p= n$.
In light of \eqref{eq:Gsum}, we have that, whenever $H$ is a homogeneous degree $d$ polynomial mapping s.t.\ $JH$ is nilpotent (i.e.\ with $(JH)^n = 0$), the coefficient $g_{i,\alpha}$ of $X^\alpha$ in the $i^{\text{th}}$ component of the inverse $F^{-1}$ of $F = {I} - H$ is zero, for all $\alpha$ such that {$|\alpha| \geq (d-1)k_0(d,n)+1$}.
In particular, $F^{-1}$ is a polynomial mapping.
{By the Bass--Connell--Wright reduction (Theorem \ref{thm:BassConnellWright}), this implies that the Jacobian conjecture is true.}
\end{proof}

{While Theorem \ref{thm:relation} states that Conjecture \ref{conj:shuffle} implies the Jacobian conjecture, we are not sure about the reverse implication, i.e.\ whether or not the Jacobian conjecture implies Conjecture \ref{conj:shuffle}.}
Nonetheless, we believe that Conjecture \ref{conj:shuffle} is a valuable line of investigation, since it represents a dramatic simplification of the combinatorial formulation of the Jacobian conjecture set out in Conjecture \ref{conj:JC2}: the latter involves complicated weighted sums over labellings of trees, whereas the former deals only with unweighted trees.

Furthermore, as we have already stated in the introduction and will prove in the next sections, it turns out that an approximate version of the ({subtree shuffling}) Conjecture~\ref{conj:shuffle} is true.

To conclude our discussion in this section, it is worth comparing {the subtree} shuffling conjecture with the following conjecture, posed by Singer in the $d=2$ setting.
\begin{conj}[Singer \cite{singer1}] \label{conj:strongsinger}
{Let $d \geq 2$, $p\geq 1$.
Then there exists $k_1(d,p)$ such that, whenever $k \geq k_1(d,p)$, the indicator functions of the length-$p$ shuffle classes in $\mathcal{C}_k^{(d)}$ span the vector space of functions $\mathcal{C}_k^{(d)} \to \mathbb{C}$. }
\end{conj}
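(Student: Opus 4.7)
The plan is to recast the conjecture as an injectivity statement and then attempt a propagation argument starting from the easiest trees. By duality with the inner product $\langle f,g\rangle=\sum_{T}\overline{f(T)}g(T)$ on $(\mathcal{C}_k^{(d)})^*$, the span of $\{\mathbf{1}_\mathcal{S}\}$ equals all of $(\mathcal{C}_k^{(d)})^*$ if and only if no nonzero $f\colon \mathcal{C}_k^{(d)}\to\mathbb{C}$ satisfies $\sum_{T\in\mathcal{S}}f(T)=0$ for every length-$p$ shuffle class $\mathcal{S}$. My goal would be to show that any such $f$ must vanish identically once $k$ is large enough.

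The first step exploits the $p$-perfect trees of Theorem~\ref{thm:perfect}. If $T$ contains a $p$-perfect path $(v_0,\dots,v_p)$, then all $(d-1)p$ subtrees coming off the path are leaves, so $\mathrm{Sh}(T;v_0,\dots,v_p)=\{T\}$, and the orthogonality relation immediately forces $f(T)=0$. By Theorem~\ref{thm:perfect} this already kills $f$ on a $(1-e^{-\kappa_{p,d}(k-p)_+})$-fraction of $\mathcal{C}_k^{(d)}$, so the task reduces to propagating the vanishing to the remaining ``nearly perfect'' trees.

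To organise the propagation I would introduce a complexity measure such as
\[
c(T):=\min_{(v_0,\dots,v_p)} \#\bigl\{(i,j) : W_{i,j}\text{ is not a leaf}\bigr\},
\]
where the minimum runs over all length-$p$ ancestral paths in $T$, so that $c(T)=0$ characterises $p$-perfect trees. Induct on $c(T)$: given $T$ with $c(T)=m>0$, pick a path realising the minimum, select a non-leaf subtree $W_{i,j}$ on that path, and study the shuffle classes obtained by swapping $W_{i,j}$ with leaves at nearby positions. The hope is that combining the orthogonality identities from several judiciously chosen shuffle classes, for instance via M\"obius inversion on a poset encoding subtree placements, should express $f(T)$ as a linear combination of values $f(T')$ with $c(T')<m$, which would close the induction.

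The main obstacle sits squarely in this last step. A shuffle class fixes the multiset $\{W_{i,j}\}$ along one specific path, so $c$ can in principle remain constant or even rise across the class; ensuring a strictly triangular cascade demands a delicate total ordering of trees that respects the entire group of shuffle moves simultaneously. For $k$ large the local limit heuristics behind Theorem~\ref{thm:approx} strongly suggest that shuffle classes are ``rich enough'' to span, but upgrading an asymptotic, $\ell^\infty$-type approximation to an exact spanning statement would likely require either an explicit dual basis --- essentially a distinguished family of shuffle classes whose incidence matrix is invertible --- or a representation-theoretic decomposition of $(\mathcal{C}_k^{(d)})^*$ under the natural subtree-permutation action. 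This gap between approximate and exact spanning is exactly why I expect the conjecture to be out of reach of the local-limit machinery used elsewhere in the paper.
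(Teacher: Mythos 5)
This is stated in the paper as a \emph{conjecture}, not a theorem: the paper attributes it to Singer, notes that Singer proved it only for $p=1,2,3$ in the special case $d=2$, and offers no proof whatsoever for the general statement. So there is no proof in the paper to compare your proposal against, and any ``proof'' you supplied would need to be a genuine original contribution resolving an open problem.

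Your proposal is, to your credit, not a proof; it is a sketch that openly names its own gap. The opening moves are sound: by duality the spanning statement is equivalent to the claim that the only $f\colon\mathcal{C}_k^{(d)}\to\mathbb{C}$ with $\sum_{T\in\mathcal{S}}f(T)=0$ for every length-$p$ shuffle class $\mathcal{S}$ is $f\equiv 0$, and a $p$-perfect tree $T$ sits in a singleton shuffle class, which instantly gives $f(T)=0$. This is precisely the observation the paper uses (Theorem~\ref{thm:perfect} and the $\ell^1$ bound of Theorem~\ref{thm:approx}), and it gets you vanishing on a $(1-e^{-\kappa_{p,d}(k-p)_+})$-fraction of $\mathcal{C}_k^{(d)}$. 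But the propagation to the remaining non-perfect trees is exactly where your argument stops. You correctly identify the obstruction: shuffling along a length-$p$ path fixes the multiset of subtrees coming off that path, so your complexity measure $c(T)$ need not strictly decrease within a shuffle class, and no ordering compatible with all shuffle moves at once is offered. A M\"obius-inversion or representation-theoretic scheme is gestured at but not constructed. Without a concrete triangular (or otherwise invertible) system of shuffle identities expressing $f(T)$ in terms of values at strictly simpler trees, the induction does not close.

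In short, what you have written is an informed discussion of why the conjecture is plausible and where it resists attack --- which matches the paper's own posture (it proves approximate $\ell^1$ and $\ell^\infty$ versions but leaves the exact statement open) --- rather than a proof. The missing idea is precisely the content of the open problem: an explicit dual basis of shuffle classes, or an invertible incidence matrix, or some other exact linear-algebraic mechanism that upgrades the overwhelming-probability statement of Theorem~\ref{thm:perfect} to a statement about \emph{every} tree in $\mathcal{C}_k^{(d)}$. Until that step is supplied, the proposal cannot be accepted as a proof of Conjecture~\ref{conj:strongsinger}.
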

Singer arrived at the latter while attempting to provide a combinatorial derivation of (Wang's) Theorem \ref{thm:wang}.
He used an innovative argument to prove Conjecture \ref{conj:strongsinger} in the cases $p=1,2,3$.
In the case $p = 3$, for instance, he proved that taking $k \geq 7$ is sufficient, thereby deducing that $F = {I} - H$ with $H$ quadratic and $(JH)^3 = 0$ has an inverse of degree at most $6$, \emph{regardless of the underlying dimension $n$}.

We remark that the $d=2$ case of the ({subtree shuffling}) Conjecture \ref{conj:shuffle} is substantially weaker than (Singer's) Conjecture \ref{conj:strongsinger}.
Conjecture \ref{conj:shuffle} states that the constant function lies in the span of the shuffle indicators, while Conjecture \ref{conj:strongsinger} states that \emph{all} functions lie in the span of the shuffle indicators.

\subsection{Shuffle chain conjecture} \label{subsec:markov}

We now discuss how to reformulate the machinery of Section \ref{subsec:orthogonality} in terms of Markov chains on trees, with the final goal of proving Theorem \ref{thm:shufflechain} from the introduction.
We believe this formulation potentially offers a tractable path to proving Conjecture \ref{conj:shuffle}, which, as we saw in Section \ref{subsec:orthogonality}, implies the Jacobian conjecture. 

In this section we consider a Markov chain that takes values in $\mathcal{C}_k^{(d)}$, and updates its state by randomly shuffling the subtrees along a randomly chosen path.

To define this process explicitly, recall that, for each vertex $v$ of height at least $p$ in a $d$-Catalan tree, there is a unique path $(v_0,\ldots,v_p)$ of length $p$ leading to $v = v_p$.
In this section we write $\mathrm{Sh}_{T,p}(v) := \mathrm{Sh}(T;v_0,\ldots,v_p)$ as a shorthand for the shuffle class associated with this path.
We also denote by $V(T)$ the set of vertices of a tree $T$.

For each $T$ in $\mathcal{C}_k^{(d)}$, consider now a probability measure $P_T$ supported on the set of vertices of height at least $p$ in $T$, i.e.\ a $[0,1]$-valued function on the vertices $v$ of $T$ such that 
\begin{align*}
\sum_{v \in V(T)} P_T(v) = 1,
\end{align*}
and $P_T(v) > 0$ if and only if $v$ has height at least $p$.
By the correspondence between vertices of $T$ of height at least $p$ and length-$p$ shuffle classes containing $T$, $P_T$ may also be regarded as a probability measure on the length-$p$ shuffle classes containing $T$.

\begin{df}
Fix a collection $\{P_T\colon T \in \mathcal{C}_k^{(d)}\}$, where each $P_T$ is a probability measure supported on the set of vertices of $T$ of height at least $p$.
The \textbf{$p$-shuffle chain} on $\mathcal{C}_k^{(d)}$ associated with $\{P_T\colon T \in \mathcal{C}_k^{(d)}\}$ is the Markov chain $(T_j)_{j \geq 0}$ with state space $\mathcal{C}_k^{(d)}$ and transition probabilities
\begin{align} \label{eq:tden}
\mathbb{P}\left( T_{j+1} = T' \,\middle|\, T_j = T \right) = \sum_{v \in V(T)} P_T(v) \frac{\mathbf{1}\{ T' \in \mathrm{Sh}_{T,p}(v) \} }{ \# \mathrm{Sh}_{T,p}(v) }.
\end{align}
In other words, conditional on the event $\{T_j = T\}$, in order to obtain $T_{j+1}$ we first choose a random vertex $v$ of $T$ according to $P_T(v)$, and then uniformly choose a tree $T_{j+1} = T'$ from those in $\mathrm{Sh}_{T,p}(v)$. 
\end{df}

Let us remark that choosing a tree $T'$ uniformly from those in $\mathrm{Sh}_{T,p}(v)$ is equivalent to taking a uniform permutation $\sigma$ of the $(d-1)p$ subtrees in the path of length $p$ leading up to $v$.

It is clear that, in the $p$-shuffle chain, communicating classes are closed: if there is a positive probability of going from $T$ to $T'$ in a certain number of steps, then there is also a positive probability of going from $T'$ to $T$ by just reversing these steps.
Furthermore, for fixed $d$ and $p$ and sufficiently large $k$, every state in a $p$-shuffle chain communicates with any other state.
Therefore, for large enough $k$, any $p$-shuffle chain is an irreducible Markov chain on a finite state space and, as such, it has a unique stationary distribution.

The framework of Section \ref{subsec:orthogonality} allows us now to show that the shuffle chain conjecture posed in the introduction implies the Jacobian conjecture.

\begin{proof}[Proof of Theorem \ref{thm:shufflechain}]
We first prove that Conjecture \ref{conj:shufflechain} implies Conjecture \ref{conj:shuffle}, and then appeal to Theorem \ref{thm:relation}. 

Let $d \geq 3$.
Suppose that, for all $p \geq 1$, there exists some $k_0(d,p)$ such that, for all $k \geq k_0(d,p)$, there exist a collection of probability measures $\{P_T\colon T \in \mathcal{C}_k^{(d)}\}$ with the property that the associated $p$-shuffle chain has the uniform distribution as its stationary distribution.
Then, for every tree $T'\in\mathcal{C}_k^{(d)}$, we have
\begin{align} \label{eq:mia}
\frac{1}{\# \mathcal{C}_k^{(d)}} = \sum_{T \in \mathcal{C}_k^{(d)} } \frac{1}{\# \mathcal{C}_k^{(d)}}\mathbb{P}\left( T_{j+1} = T' \,\middle|\, T_j = T \right) .
\end{align}
Multiplying \eqref{eq:mia} by $\# \mathcal{C}_k^{(d)}$ and using the transition probabilities \eqref{eq:tden}, we obtain 
\begin{align} \label{eq:mia2}
1 = \sum_{T \in \mathcal{C}_k^{(d)} } \sum_{v \in V(T)} P_T(v) \frac{\mathbf{1}\{ T' \in \mathrm{Sh}_{T,p}(v) \} }{ \# \mathrm{Sh}_{T,p}(v) }.
\end{align}
Given a shuffle class $\mathcal{S} \subseteq \mathcal{C}_k^{(d)}$, define the constant
\begin{align*}
\lambda_\mathcal{S} :=  \sum_{T \in \mathcal{C}_k^{(d)} } \sum_{v \in V(T)}  \frac{P_T(v)}{ \# \mathrm{Sh}_{T,p}(v) } \mathbf{1} \{\mathrm{Sh}_{T,p}(v)  = \mathcal{S} \} .
\end{align*}
Then, \eqref{eq:mia2} reads
\begin{align*}
1 = \sum_{ \mathcal{S} } \lambda_{\mathcal{S}} \mathbf{1}_{\mathcal{S}}(T')
\end{align*}
for all $T'$ in $\mathcal{C}_k^{(d)}$.
In other words, the constant function $\mathbf{1}:\mathcal{C}_k^{(d)} \to \mathbb{C}$ lies in the span of the length-$p$ shuffle indicators.

In particular, if, for some $d \geq 3$ and all $p \geq 1$, Conjecture \ref{conj:shufflechain} is true, then so is Conjecture \ref{conj:shuffle}, and, by virtue of Theorem \ref{thm:relation}, so is the Jacobian conjecture.
\end{proof}

Conjecture \ref{conj:shufflechain} is, strictly speaking, stronger than Conjecture \ref{conj:shuffle}, and corresponds to a special case of Conjecture \ref{conj:shuffle} in which all of the weights $\lambda_S$ are non-negative.
However, we believe it is a valuable reformulation, as it recasts the problem in terms of Markov chains on trees, for which there is a rich literature (see e.g.\ \cite{aldous2, AP, wilson, zachary}).

\section{Perfect trees} \label{sec:exponential}

In Section \ref{sec:intro} we briefly introduced the notion of $p$-perfect tree.
\begin{df}
A $d$-Catalan tree $T$ is \textbf{$p$-perfect} if it contains a path of vertices $(v_0,\ldots,v_p)$, as in Definition \ref{df:shuffle}, such that each of the $(d-1)p$ siblings of the vertices $v_1,\ldots,v_p$ is a leaf.
\end{df} 

See Figure \ref{fig:preperfect} for an illustration.

The main aim of this section is to prove that a large uniform $d$-Catalan tree is extremely likely to be $p$-perfect (Theorem \ref{thm:perfect}).
We then exploit this crucial fact to prove:
\begin{enumerate}
\item
the $\ell^1$ approximation of the constant function in Theorem \ref{thm:approx}, which can be regarded as an approximate version of Conjecture \ref{conj:shuffle};
\item certain quantitative estimates for the coefficients of the inverse of a Keller mapping, which can be regarded as an approximate version of the Jacobian conjecture itself.
\end{enumerate}

In recent decades, there has been a large body of literature on the local structure of random trees.
The interested reader might consult work by Aldous \cite{aldous}, Janson \cite{janson}, Holmgren and Janson \cite{HJ}, and in particular Janson's survey article \cite{jansonsurvey}.
Nonetheless, it appears the exact question we study (namely, estimating the probability that a large uniform $d$-Catalan tree is $p$-perfect) has not been examined in the literature.
Furthermore, for our statements we require absolute bounds on the probabilities involved, rather than convergence in distribution, hence we develop the requisite estimates from scratch.

\subsection{Uniform tree algorithm}\label{subsec:uniform}

We start by collecting some basic facts about $d$-Catalan numbers.
Denote by $C_k^{(d)}:= \#\mathcal{C}_k^{(d)}$ the \textbf{$d$-Catalan number}, i.e.\ the number of $d$-Catalan trees with $k$ internal vertices (and, thus, $dk+1$ vertices in total).
It is a combinatorial exercise (see for example \cite[Section 7.5]{concrete}) to prove that these numbers possess the closed-form expression
\begin{align} \label{eq:catcount}
\# \mathcal{C}_k^{(d)} = \frac{1}{(d-1)k+1} \binom{dk}{k}.
\end{align}
Using Stirling's formula, it is easily verified that
\begin{align} \label{eq:stirling}
C_k^{(d)} = (1 + o(1)) C_d k^{-3/2} e^{A_d k}
\qquad {\text{as } k\to\infty,}
\end{align}
where $C_d := \frac{1}{\sqrt{2  \pi }} \frac{ \sqrt{d}}{(d-1)^{3/2}}$ and $A_d := d \log d - (d-1) \log (d-1)$.

The $d$-Catalan numbers also satisfy the recursion
\begin{align*}
C_k^{(d)} = \sum_{ \substack{{k_1,\dots,k_d\geq 0}\colon \\k_1 + \cdots + k_d = k - 1 }} C_{k_1}^{(d)} \cdots C_{k_d}^{(d)} .
\end{align*}
This immediately sets up a way of generating a uniform Catalan tree in $\mathcal{C}_k^{(d)}$:
\begin{itemize}
\item {If $k=0$, make the root a leaf, thus obtaining the unique $d$-Catalan tree with $0$ internal vertices.
If $k\geq 1$, sample} a random vector $(k_1,\ldots,k_d)$, satisfying {$k_1,\dots,k_d\geq 0$ and} $k_1+\cdots + k_d = k -1$, from the probability distribution
\begin{align} \label{eq:leafer}
P_k^{(d)}(k_1,\ldots,k_d) :=  \frac{ C_{k_1}^{(d)} \cdots C_{k_d}^{(d)} }{C_k^{(d)}} .
\end{align}
\item {For all $i=1,\dots,d$}, tag the $i^{\text{th}}$ child (going left-to-right) of the root with the integer $k_i$.
If the $i^{\text{th}}$ child of the root is tagged with $k_i = 0$, make that child a leaf.
If the $i^{\text{th}}$ child is tagged with $k_i > 0$, then sample a new random vector $(k_{i,1},\ldots,k_{i,d})$, satisfying {$k_{i,1},\dots,k_{i,d}\geq 0$ and} $k_{i,1} + \cdots + k_{i,d} = k_i - 1$, from the probability distribution 
\begin{align*} 
P_{k_i}^{(d)}(k_{i,1},\ldots,k_{i,d}) :=  \frac{ C_{k_{i,1}}^{(d)} \cdots C_{{k_{i,d}}}^{(d)} }{C_{{k_i}}^{(d)}} .
\end{align*}
\item {Repeat the procedure with all the vertices in the second generation, and so on.}
\end{itemize}

\subsection{Likelihood of $p$-perfect trees}
\label{subsec:likelihoodPerfectTrees}

Let us now sketch how one can utilise the algorithm of Section \ref{subsec:uniform} to provide a description of the local structure of a $d$-Catalan tree at the root.
As in Section \ref{sec:intro}, let $\mathbb{P}_k^{(d)}$ be the uniform probability distribution on $\mathcal{C}_k^{(d)}$.
Sample $T$ from $\mathbb{P}_k^{(d)}$ and let $K_1,\ldots,K_d$ denote the respective numbers of internal vertices in the $d$ subtrees subtended by the $d$ children of the root.
The random vector $K := (K_1,\ldots,K_d)$ is then distributed according to \eqref{eq:leafer}.
Now write $(X_1,\ldots,X_{d-1})$ for the vector $K$ with the largest coordinate removed (if there are joint largest coordinates, remove any largest coordinate).
Whenever $x_1  + \cdots + x_{d-1} < k/2$ (in which case there certainly were not joint largest coordinates), we have
\begin{align*}
{\mathbb{P}_k^{(d)}}((X_1,\ldots,X_{d-1}) = (x_1,\ldots,x_{d-1} ) ) = d C_{x_1}^{(d)} \cdots C_{{x_{d-1}}}^{(d)} \frac{ C^{(d)}_{k - (x_1 + \cdots + x_{d-1} + 1) }}{ C_k^{(d)} } ,
\end{align*}
with the factor $d$ in front accounting for the fact that any given coordinate of $K$ is equally likely to be the largest (on the event that there is a unique largest coordinate). 
For fixed $(x_1,\ldots,{x_{d-1}})$, using \eqref{eq:stirling} we see that
\begin{equation} \label{eq:Qdist}
\begin{split}
Q_\infty ( x_1,\ldots,x_{d-1} )
&:= \lim_{k \to \infty} {\mathbb{P}_k^{(d)}}((X_1,\ldots,X_{d-1}) = (x_1,\ldots,x_{d-1} ) ) \\
&= d C_{x_1}^{(d)} \cdots C_{{x_{d-1}}}^{(d)} e^{ - A_d(x_1+\cdots+{x_{d-1}}+1)}.
\end{split}
\end{equation}
In other words, if we pick a uniform Catalan tree from $\mathcal{C}_k^{(d)}$, when $k$ is very large, we expect that, of the $d$ children subtended by the root, the subtrees subtended by exactly $d-1$ of these children usually have size $O(1)$, and one of the $d$ children subtends a subtree of size $k - O(1)$.
The distribution of the sizes of the $d-1$ order $1$ subtrees is given by \eqref{eq:Qdist}. 

Let us remark that such a local convergence holds in much more generality: Stufler \cite{stufler} showed that any conditional Galton--Watson tree seen from a typical vertex converges in any ball of radius $o(\sqrt{k})$ to a Kesten tree.

We hope this sketch calculation supplies the reader with an idea of the local behaviour of large $d$-Catalan trees.
Let us note in particular that the probability that, in a giant uniform $d$-Catalan tree (i.e.\ in the limit as $k \to \infty$),  all but one of the children of the root is a leaf, is given by 
\begin{align*}
Q_\infty ( 0,\ldots,0 ) = d e^{ - A_d} = { \left(1+\frac{1}{d-1}\right)^{-(d-1)}} \geq 1/e
\end{align*}
for all $d \geq 2$.
In turns out that {the} latter bound is in fact absolute across all $d$ and across uniform Catalan trees with any number of vertices:

\begin{lemma}\label{lem:rootChildren}
For all $d\geq 2$ and $k \geq 1$, the probability that at least $d-1$ of the children of the root of a uniformly chosen $d$-Catalan tree with $k$ internal vertices are leaves is at least $1/e$. 
\end{lemma}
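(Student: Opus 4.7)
The plan is to compute the probability exactly via the recursive uniform tree algorithm of Section~\ref{subsec:uniform}, then show the resulting expression is monotone in $k$ and bound it below by its asymptotic limit.

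First, I would identify the event. Under $\mathbb{P}_k^{(d)}$, at least $d-1$ of the children of the root are leaves if and only if the root split vector $(k_1,\dots,k_d)$ sampled according to \eqref{eq:leafer} has at most one positive coordinate. For $k=1$, the constraint $k_1+\dots+k_d=0$ forces all $k_i=0$ and so this probability is $1\ge 1/e$. For $k\ge 2$, the sum is at least $1$, so at most one positive coordinate means exactly one $k_i$ equals $k-1$ while the rest vanish; there are $d$ such tuples, each of \eqref{eq:leafer}-probability $C_{k-1}^{(d)}/C_k^{(d)}$, yielding total probability
\[
p_d(k) \;:=\; \frac{d\, C_{k-1}^{(d)}}{C_k^{(d)}}.
\]
It then suffices to show $p_d(k)\ge 1/e$ for every $d\ge 2$ and every $k\ge 2$.

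Next, I would plug the closed form \eqref{eq:catcount} into $p_d(k)$ and simplify. After cancelling the factor $dk$ that appears in both numerator and denominator, the remaining ratio organises as a product of $d-1$ factors, namely
\[
p_d(k) \;=\; \prod_{l=1}^{d-1} \frac{(d-1)k + 2 - l}{dk - l}.
\]
A direct computation gives that the $k$-derivative of the $l$-th factor has numerator of sign $l-2d$, which is negative for $1\le l \le d-1$, so each factor is strictly decreasing in $k$. Hence $p_d(k)$ is decreasing in $k$ and bounded below by $\lim_{k\to\infty} p_d(k) = \bigl((d-1)/d\bigr)^{d-1}$.

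Finally, I would verify $\bigl((d-1)/d\bigr)^{d-1} \ge 1/e$ for all $d\ge 2$. Taking reciprocals and substituting $n=d-1$, this is equivalent to $(1+1/n)^n \le e$, which is a classical inequality (the sequence $(1+1/n)^n$ is increasing to $e$ from below). The only step requiring genuine work is the monotonicity of the product in $k$; I expect the sign-of-derivative computation to resolve it without further obstacle, and the remaining steps are algebraic simplification and a standard estimate.
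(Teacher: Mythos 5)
Your proof is correct and follows essentially the same route as the paper: both compute the probability exactly as $d\,C_{k-1}^{(d)}/C_k^{(d)}$, rewrite it as a product of $d-1$ ratios linear in $k$, bound each ratio below by $(d-1)/d$, and finish with $(1-1/d)^{d-1}\geq 1/e$. The only cosmetic difference is how the factor-wise bound $(d-1)/d$ is obtained: you show each factor is decreasing in $k$ and pass to the limit, while the paper discards a factor $\geq 1$ and compares across the index $i$ of the product, bounding by the smallest term.
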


\begin{proof}
If $k =1$, then all of the children are leaves with probability $1$, and we are done.

We may then suppose $k \geq  2$.
We have
\begin{align*}
\mathbb{P}^{(d)}_k (  \text{at least $d-1$ of the rootchildren are leaves} ) = d P_k^{(d)}(0,0,\ldots,0,k-1),
\end{align*}
where the factor of $d$ comes from the fact that there are $d$ different ways of choosing which of the $d$ children of the root is going to be the non-leaf. 

Note that $C_0^{(d)} = 1$.
Using \eqref{eq:leafer} {and \eqref{eq:catcount}}, we then have 
\begin{align*}
\mathbb{P}^{(d)}_k ( \text{at least $d-1$ of the rootchildren are leaves}) = d \frac{C_{k-1}^{(d)}}{C_k^{(d)}} = d \frac{(d-1)k+1}{(d-1)(k-1)+1} \frac{ \binom{d(k-1)}{(k-1)} }{ \binom{dk}{k} },
\end{align*}
which implies
\begin{align} \label{eq:wolf}
\mathbb{P}^{(d)}_k ( \text{at least $d-1$ of the rootchildren are leaves})  \geq  d \frac{ \binom{d(k-1)}{k-1} }{ \binom{dk}{k} }.
\end{align}
Expanding the binomial coefficients and rearranging, we see that
\[
d \frac{ \binom{d(k-1)}{k-1} }{ \binom{dk}{k} }
= \frac{  (dk-k)!}{ ((dk-k) - (d-1))!} \frac{ ((dk-1)-(d-1))!}{ (dk-1)!}
= \prod_{i=1}^{d-1} \frac{ dk - k - i +1 }{ dk - i } .
\]
{Noticing that the function $x\mapsto \frac{x-a}{x}$ is increasing on $(0,\infty)$ for any fixed $a>0$, we deduce}
\begin{align*} 
\mathbb{P}^{(d)}_k (  \text{At least $d-1$ of the rootchildren are leaves})
\geq \left(  \frac{ dk - k - d +2 }{ dk - d + 1} \right)^{d-1}.
\end{align*}
Now note that $ \frac{ dk - k - d +2 }{ dk - d + 1} = 1 - \frac{ (k-1)}{d(k-1)+1} \geq 1- 1/d$.
It follows that
\begin{align*}
\mathbb{P}^{(d)}_k (  \text{At least $d-1$ of the rootchildren are leaves}) \geq (1 - 1/d)^{d-1}.
\end{align*}
For $d \geq 2$, we have $(1-1/d)^{d-1}=(1+1/(d-1))^{-(d-1)} \geq 1/e$, completing the proof.
\end{proof}

\begin{cor}\label{cor:probPerfectChain}
Let {$k > p$}.
Then the $\mathbb{P}_k^{(d)}$-probability that there is at most one internal vertex in each of the generations {$1$ through $p$} is at least $e^{-p}$.
\end{cor}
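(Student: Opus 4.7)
My plan is to establish the bound by induction on $p$, invoking Lemma \ref{lem:rootChildren} at each step and exploiting the recursive self-similarity of uniform $d$-Catalan trees described by the algorithm of Section \ref{subsec:uniform}. Let $B_p^{(k)}$ denote the event that a uniform $d$-Catalan tree with $k$ internal vertices has at most one internal vertex in each of generations $1,\dots,p$. I will in fact prove $\mathbb{P}_k^{(d)}(B_p^{(k)}) \geq e^{-p}$ for every $k,p \geq 0$; the corollary (stated for $k > p$) is an immediate special case. The base case $p=0$ is vacuous.

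For the inductive step with $p \geq 1$, the cases $k \in \{0,1\}$ are handled by inspection: with $k=0$ the tree is a single leaf, while with $k=1$ the root is the unique internal vertex and its $d$ children are forced to be leaves, so generation $1$ (and every subsequent generation) contains no internal vertices and $B_p^{(k)}$ holds deterministically. For $k \geq 2$, let $A$ denote the event that at most one child of the root is internal; Lemma \ref{lem:rootChildren} gives $\mathbb{P}(A) \geq 1/e$. Since $k \geq 2$, on $A$ exactly one child of the root is internal; call it $V$ and let $S_V$ be its subtree, which has size $k-1$. The key fact I will use is that conditional on $A$, the tree $S_V$ is uniformly distributed on $\mathcal{C}_{k-1}^{(d)}$. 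Because generations $2,\dots,p$ of the ambient tree correspond bijectively to generations $1,\dots,p-1$ of $S_V$, the event $B_p^{(k)}\cap A$ coincides with $A$ intersected with the analogous event $B_{p-1}^{(k-1)}$ inside $S_V$, so by the induction hypothesis
\[
\mathbb{P}_k^{(d)}(B_p^{(k)}) \;\geq\; \mathbb{P}(A)\,\mathbb{P}\bigl(B_{p-1}^{(k-1)} \text{ in } S_V \;\big|\; A\bigr) \;\geq\; e^{-1}\cdot e^{-(p-1)} \;=\; e^{-p}.
\]

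The only delicate point is justifying the conditional uniformity of $S_V$ on the event $A$. This follows from the recursive sampling scheme: the offspring-size vector $(k_1,\dots,k_d)$ of the root has distribution $P_k^{(d)}$ of \eqref{eq:leafer}, and conditionally on it the $d$ subtrees are independent and uniform on $\mathcal{C}_{k_i}^{(d)}$. The event $A$ is precisely the event that the multiset $\{k_1,\dots,k_d\}$ equals $\{0,\dots,0,k-1\}$, and by symmetry across the $d$ possible positions for the nonzero entry, conditioning on $A$ leaves the subtree at the unique internal child uniformly distributed on $\mathcal{C}_{k-1}^{(d)}$, independently of which child it is. Once this distributional identity is in place, the induction proceeds as above, and the desired bound follows.
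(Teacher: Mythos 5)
Your proof is correct and is essentially a careful formalization of the paper's one-line argument (``Use the uniform tree algorithm and Lemma \ref{lem:rootChildren} $p$ times in a row''), making the recursive self-similarity and the conditional uniformity of the subtree at the unique internal child explicit via induction on $p$. In fact your version establishes the bound for all $k, p \geq 0$, which is slightly more than the corollary asserts, but the approach is the same.
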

\begin{proof}
Use the uniform tree algorithm and Lemma \ref{lem:rootChildren} $p$ times in a row.
\end{proof}

We now estimate the probability that $T$ is $p$-perfect.
Let $T$ be any tree in $\mathcal{C}_k^{(d)}$.
{For $d\geq 2$, }$T$ has at most $1 + d  + \cdots + d^{p-1 } \leq d^p$ {vertices in its generations $0$ through $p-1$}. 
Let $v_1,\ldots,v_r$ be a list of the vertices in generation $p$ of the tree, and let $n_1,\ldots,n_r$ denote the number of internal vertices in the trees they subtend.
Then
\begin{align}\label{eq:ineq}
n_1 + \cdots + n_r \geq k - d^p.
\end{align}
If we let $Q_{p,d}(k)$ be the probability that a uniformly chosen $d$-Catalan tree with $k$ {internal vertices} is not $p$-perfect. 
To obtain an estimate on $Q_{p,d}(k)$, note that we have
\begin{multline*}
Q_{p,d}(k)
\leq \; \mathbb{P}^{(d)}_k( \{ \text{$T$ does not have a {$p$-}perfect path starting at the root}\}  \\
   \cap \{\text{ $T$ does not have a {$p$-}perfect path starting at generation $p$}\} ) .
\end{multline*}
By Corollary \ref{cor:probPerfectChain}, the probability of the first event is bounded above by $1 - e^{-p}\leq e^{ - e^{-p} }$.
By the uniform tree algorithm, the probability of the second event, conditionally on $r$ and $n_1,\ldots,n_r$, is bounded above by $\prod_{i=1}^r Q_{p,d}(n_i)$.
Therefore, for any $k > p$ we have
\begin{align} \label{eq:relation}
Q_{p,d}(k)
\leq e^{ - e^{-p} } \sup_{ \substack{{r\leq d^p}, \\ n_1 + \cdots + n_r \geq k-d^p}} \prod_{i=1}^r Q_{p,d}(n_i),
\end{align}
where the supremum is taken over all $r\leq d^p$ (the number of vertices $r$ in generation $p$ of the tree satisfies $r \leq d^p$) and $r$-tuplets of non-negative integers $n_1,\ldots,n_r$ adding up to at least $k - d^p$ (by \eqref{eq:ineq}).
Using this inequality, we are now ready to prove Theorem \ref{thm:perfect}.

\begin{proof}[Proof of Theorem \ref{thm:perfect}]
We need to show that $Q_{p,d}(k)\leq e^{ - \kappa_{p,d} (k-p)_+}$ for all $k\in\mathbb{N}$. 
The result is clearly true for all $k \leq p$, because $Q_{p,d}(k)$ is a probability, and hence certainly $\leq 1$.
We now prove the result by induction for $k > p$.

Suppose that $Q_{p,d}(m)\leq e^{ - \kappa_{p,d} (m-p)_+}$ for all $m < k $.
Then, using \eqref{eq:relation}, we have 
\begin{align} \label{eq:relation2}
Q_{p,d}(k) \leq \sup_{ \substack{r\leq d^p, \\ n_1 + \cdots + n_r \geq k-d^p}}  e^{ - e^{-p} - \kappa_{p,d} \sum_{ i = 1}^r (n_i - p)_+ }.
\end{align}
Using the simple fact that $(n_i - p)_+ \geq n_i - p$ and the constraints, we obtain
\begin{align} \label{eq:relation3}
Q_{p,d}(k) \leq \exp \left\{ - e^{-p} - \kappa_{p,d} ( k - d^p - d^pp) \right\}.
\end{align}
To prove the result, it is now sufficient to show that 
\begin{align} \label{eq:relation5}
\exp \left\{ - e^{-p} - \kappa_{p,d} ( k - (p+1)d^p) \right\} \leq e^{ - \kappa_{p,d} (k-p)},
\end{align}
which amounts to the inequality
\begin{align*}
e^{-p} + \kappa_{p,d}(k-(p+1)d^p) \geq \kappa_{p,d}(k-p) .
\end{align*}
The latter is easily verified using the definition of $\kappa_{p,d}:= (2 p d^pe^{p})^{-1}$.
\end{proof}
Let us remark that, if $(v_0,\ldots,v_p)$ is a path of vertices in a tree, then $v_0,\ldots,v_{p-1}$ are certainly internal.
It follows that, when $p > k$, it is impossible to find a path of length $p$ in a tree with $k$ internal vertices.
Therefore, when $p > k$, $\mathcal{C}_k^{(d)}$ contains no $p$-perfect trees (or length-$p$ shuffle classes, for that matter) and, in this sense, the inequality of Theorem \ref{thm:perfect} is sharp.

\subsection{The $\ell^1$ bound}
\label{subsec:L1bound}

The existence of a function in the span of the length-$p$ shuffle indicators satisfying the total variation bound \eqref{eq:xx} in Theorem \ref{thm:approx} is an immediate consequence of Theorem~\ref{thm:perfect}, as we now see.

\begin{proof}[Proof of \eqref{eq:xx}]
If $T$ is $p$-perfect, then $T$ lies in a singleton length-$p$ shuffle class.
It follows that the indicator function of any $p$-perfect tree lies in the span of the length-$p$ shuffle indicators.
Therefore, the function
\begin{align*}
\phi (T) := \mathbf{1}\{ \text{$T$ is $p$-perfect} \}
\end{align*}
also lies in the span of the length-$p$ shuffle indicators.
Recall that $\mathbb{E}_k^{(d)}$ denotes the expectation with respect to $\mathbb{P}_k^{(d)}$.
It then follows from Theorem \ref{thm:perfect} that
\begin{align*}
|| \phi - \mathbf{1} ||_1
:= \mathbb{E}_k^{(d)} | 1 - \phi(T) | 
= \mathbb{P}_k^{(d)}( \text{$T$ is not $p$-perfect} ) \leq e^{ - \kappa_{p,d} (k-p) } ,
\end{align*}
as desired.
\end{proof}

The full proof of Theorem \ref{thm:perfect}, i.e.\ the exhibition of a function $\phi_k$ satisfying both \eqref{eq:xx} and \eqref{eq:yy}, can be found in Section \ref{sec:infinity}.

\subsection{Quantitative estimates for inverse coefficients of Keller mappings} \label{subsec:quantitative}

{The $\ell^1$ bound in Theorem \ref{thm:approx} yields {a bound} for the inverse coefficients $g_{i,\alpha}$ from \eqref{eq:Gsum} of maps $F$ of the form \eqref{eq:BCWform}.
{More precisely}, we will prove that the coefficients $g_{i,\alpha}$ satisfy a sharp bound, which, in the case where $F$ is a Keller mapping, can be improved upon by an exponential factor.}

\begin{thm} \label{thm:twobounds}
Let $F$ be a polynomial map of the form \eqref{eq:BCWform}, {where $JH$ may or may not be nilpotent.
Let $L:=\max_{i,\alpha} |H_{i,\alpha}|$.}
Then, for all $i \in [n]$ and all multi-indices $\alpha$ of degree $|\alpha| = (d-1)k+1$, we have
\begin{align} \label{eq:aa}
|g_{i,\alpha}| &\leq {\frac{1}{(d!)^k}} \frac{1}{(d-1)k+1} \binom{dk}{k}  n^{k-1} \frac{|\alpha|!}{ \alpha!}  L^k .
\end{align}
This bound is sharp, in the sense that there exists an $F$ such that the equality holds in \eqref{eq:aa} {for all $i$ and $\alpha$}.
If we additionally assume that $JH$ is nilpotent, with $(JH)^p = 0$ for some $p\in\mathbb N$, then for any $\phi$ lying in $\mathrm{span} \{ \mathbf{1}_\mathcal{S} \colon \mathcal{S}\text{ is a length-$p$ shuffle class in $\mathcal{C}_k^{(d)}$} \}$ we have
\begin{align} \label{eq:bb}
|g_{i,\alpha}|  &\leq {\frac{1}{(d!)^k}} \frac{1}{(d-1)k+1} \binom{dk}{k}  n^{k-1} \frac{|\alpha|!}{ \alpha!}  L^k \times || \phi - \mathbf{1} ||_1.
\end{align}
In particular, when $(JH)^p = 0$, {we have}
\begin{align} \label{eq:cc}
|g_{i,\alpha}|  &\leq {\frac{1}{(d!)^k}} \frac{1}{(d-1)k+1} \binom{dk}{k}  n^{k-1} \frac{|\alpha|!}{ \alpha!} L^k  \times C e^{ - ck},
\end{align}
{where $C$ and $c$ are the same constants appearing in \eqref{eq:xx}.}
\end{thm}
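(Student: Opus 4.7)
My plan is to establish the three bounds in sequence: \eqref{eq:aa} by a direct triangle-inequality estimate on the inversion formula \eqref{eq:Gsum}; \eqref{eq:bb} by upgrading this estimate using the shuffle orthogonality from Section~\ref{subsec:orthogonality}; and \eqref{eq:cc} by plugging the $\ell^1$ bound \eqref{eq:xx} into \eqref{eq:bb}.

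For \eqref{eq:aa}, I would first rewrite \eqref{eq:Gsum} as $(d!)^k g_{i,\alpha} = \sum_{T \in \mathcal{C}_k^{(d)}} E_{i,\alpha,H}(T)$ and observe that $|\mathcal{E}_H(\mathcal{T})| \leq L^k$ for every labelled tree $\mathcal{T}$, since every $T \in \mathcal{C}_k^{(d)}$ has exactly $k$ internal vertices. The core combinatorial step is counting the $(i,\alpha)$-labellings of a fixed $T$: the root is forced to have type $i$, the remaining $k-1$ internal vertices may be assigned any of the $n$ types, and the $(d-1)k+1 = |\alpha|$ leaves sit in distinguishable positions of the planar tree and must be labelled according to the profile $\alpha$. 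This gives $n^{k-1}\,|\alpha|!/\alpha!$ labellings, yielding the per-tree estimate $|E_{i,\alpha,H}(T)| \leq n^{k-1}\,(|\alpha|!/\alpha!)\,L^k$; multiplying by $|\mathcal{C}_k^{(d)}| = \frac{1}{(d-1)k+1}\binom{dk}{k}$ yields \eqref{eq:aa}. Sharpness is then immediate: the choice $H_{i,\alpha} \equiv L$ (for all admissible $(i,\alpha)$) makes every $\mathcal{E}_H(\mathcal{T}) = L^k$, so that all triangle inequalities above collapse to equalities simultaneously for every $i$ and $\alpha$.

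For \eqref{eq:bb}, the Shuffle Lemma (Lemma~\ref{lem:nilp}) combined with conjugate linearity in the first slot of $\langle\cdot,\cdot\rangle$ yields $\langle \phi, E_{i,\alpha,H} \rangle = 0$ for any $\phi$ in the span of length-$p$ shuffle indicators, provided $(JH)^p = 0$. Combined with \eqref{eq:Gsum3}, this gives
\[
(d!)^k g_{i,\alpha} = \langle \mathbf{1}, E_{i,\alpha,H} \rangle = \langle \mathbf{1} - \phi, E_{i,\alpha,H} \rangle,
\]
and the triangle inequality together with the per-tree bound from the previous paragraph produces \eqref{eq:bb} once one recognises that $\sum_{T \in \mathcal{C}_k^{(d)}} |1 - \phi(T)| = |\mathcal{C}_k^{(d)}| \cdot \|\phi - \mathbf{1}\|_1$. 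Specialising to $\phi = \phi_k$ and invoking \eqref{eq:xx} then delivers \eqref{eq:cc}. Given that the structural work underlying the orthogonality framework and the $\ell^1$ approximation has already been carried out in the earlier sections, there is no genuine obstacle here: the proof amounts to a careful accounting of labellings and a single triangle-inequality step. The only delicate point is to verify that the conjugation present in $\langle \cdot,\cdot\rangle$ is harmless for the modulus estimate, which it is since only $|1 - \phi(T)|$ enters the final bound.
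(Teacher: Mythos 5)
Your proposal is correct and follows essentially the same route as the paper's proof: the per-tree bound $|E_{i,\alpha,H}(T)|\leq n^{k-1}\frac{|\alpha|!}{\alpha!}L^k$ via the labelling count, sharpness by taking $H_{i,\alpha}\equiv L$, and the orthogonality $\langle\phi,E_{i,\alpha,H}\rangle=0$ from the Shuffle Lemma to replace $\mathbf{1}$ by $\mathbf{1}-\phi$ in the inner product, finishing with the normalised $\ell^1$ norm and \eqref{eq:xx}. No discrepancies.
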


\begin{proof}
Let $H$ be a degree $d$ homogeneous polynomial mapping, where $JH$ may or may not be nilpotent.
{Notice that $L:=\max_{i,\alpha} |H_{i,\alpha}|$ is finite, since $H_{i,\alpha}=0$ for all but finitely many $i$ and $\alpha$.}
We thus have the simple bound 
\begin{align} \label{eq:energybound}
|\mathcal{E}_H(\mathcal{T})| \leq L^k
\end{align}
on the $H$-weight of a labelling $\mathcal{T}$ of a $d$-Catalan tree with $k$ internal vertices (see definition \eqref{eq:blender}).
In general, this bound is sharp, and is attained for any labelled tree $\mathcal{T}$ with $k$ internal vertices by setting $H_{i,\alpha} := L$ for all $i\in [n]$ and $|\alpha|=d$. 

Given $\alpha \in \mathbb{Z}^n_{\geq 0}$ and a $d$-Catalan tree $T$ with $|\alpha| = (d-1)k+1$ leaves, there are 
\begin{align} \label{eq:labellings}
\# \{ (i,\alpha)  \text{ labellings of $T$} \} = n^{k-1} \frac{|\alpha|!}{ \alpha!}
\end{align}
ways of $(i,\alpha)$-labelling $T$.
The factor $n^{k-1}$ is due to the number of ways of labelling the $k-1$ non-root internal vertices with any type in $[n]$, while $|\alpha|!/(\alpha_1! \cdots \alpha_n!)$ is the {multinomial coefficient counting the} distinct ways of labelling the leaves with labels in $[n]$ such that exactly $\alpha_j$ leaves are given label $j$.
We remark that the number of $(i,\alpha)$ labellings does not depend on the particular $d$-Catalan tree $T$, but only on its total number of vertices/leaves. 

Using \eqref{eq:labellings} and \eqref{eq:energybound} in \eqref{eq:F}, we obtain the bound
\begin{align} \label{eq:Esum}
|E_{i,\alpha,H}(T)| \leq  n^{k-1} \frac{|\alpha|!}{ \alpha!}  L^k
\end{align}
for all $d$-Catalan tree $T$ with $|\alpha| = (d-1)k+1$ leaves.
Using the bound \eqref{eq:Esum} in \eqref{eq:Gsum} in conjunction with $d$-Catalan number formula \eqref{eq:catcount}, we obtain
\begin{align} \label{eq:xx2}
|g_{i,\alpha}| \leq \frac{1}{(d!)^k}  \frac{1}{(d-1)k+1} \binom{dk}{k} n^{k-1} \frac{|\alpha|!}{ \alpha!}  L^k,
\end{align} 
which accounts for the first {bound} \eqref{eq:aa}.
The sharpness of \eqref{eq:xx2} (and \eqref{eq:Esum}) may be seen by setting again $H_{i,\alpha} := L$ for all $i,\alpha$; {one can verify that, for such a choice of $H$, $JH$ is \emph{not} nilpotent.}

Let us now suppose that {$JH$ is nilpotent, with} $(JH)^p = 0$, and turn to the proof of \eqref{eq:bb}.
Similarly as in the proof of Theorem \ref{thm:relation}, one can see that, whenever {$(JH)^p = 0$,} in light of \eqref{eq:kalpa0}, any function $\phi$ lying in the span of the length-$p$ shuffle indicators satisfies $\langle \phi, E_{i,\alpha,H} \rangle = 0$.
Accordingly, we have
\begin{align*}
\Phi_i^\alpha(H) = {\frac{1}{(d!)^k}} \langle \mathbf{1}, E_{i,\alpha,H} \rangle = {\frac{1}{(d!)^k}} \langle \mathbf{1} - \phi, E_{i,\alpha,H} \rangle.
\end{align*}
In particular, using \eqref{eq:Esum}, for any $\phi$ in the span of the length-$p$ shuffle indicators we have the total variation bound 
\begin{align} \label{eq:newbound}
\begin{split}
| \Phi_i^\alpha(H) |
&\leq {\frac{1}{(d!)^k}} n^{k-1} \frac{|\alpha|!}{ \alpha!} L^k  \sum_{T \in \mathcal{C}_k^{(d)} } | 1 - \phi(T) | \\
&= \frac{1}{(d!)^k}  \frac{1}{(d-1)k+1} \binom{dk}{k} n^{k-1} \frac{|\alpha|!}{ \alpha!}  L^k ||\phi - \mathbf{1}||_1 ,
\end{split}
\end{align}
which completes the proof of \eqref{eq:bb}.
{Finally, \eqref{eq:cc} follows immediately from \eqref{eq:bb} and \eqref{eq:xx}.}
\end{proof}

Let us take a moment to elaborate on this result.
First, we stress that the bound \eqref{eq:aa} concerns the inverse coefficients of \emph{any} degree $d$ homogeneous polynomial mapping and is not a priori concerned with the Jacobian conjecture.
We believe this bound, which is also new to the best of our knowledge, might be of independent interest.

As discussed in Section \ref{sec:Keller}, Bass, Connell and Wright showed in \cite{BCW} that, in order to prove the Jacobian conjecture, it is sufficient to consider maps $F=I-H$, with $H$ homogeneous of degree $d$ and $JH$ nilpotent (see Theorem \ref{thm:BassConnellWright}).
Indeed, the nilpotency condition is equivalent to the Keller condition for such maps (see \cite[Section 2.5]{JP}).

We remark that both bounds in \eqref{eq:aa} and \eqref{eq:cc} grow exponentially in $k$.
Theorem \ref{thm:twobounds} may be regarded as an approximate version of the Jacobian conjecture: the inverses of Keller maps are closer to being polynomial than their non-Keller counterparts, in that the coefficients of their high-degree terms {are smaller by an exponential factor.}

Finally, we note that \eqref{eq:bb} implies Theorem \ref{thm:relation}.
Indeed, if Conjecture \ref{conj:shuffle} is true, then, for large enough $k$, we can set $\phi = \mathbf{1}$ in \eqref{eq:bb}, so that the bound yields $g_{i,\alpha} = 0$ whenever $|\alpha| = (d-1)k+1$.

\section{Construction of the $\ell^\infty$ bound} \label{sec:infinity}

{In this section, we construct a function that satisfies the $\ell^{\infty}$ bound of Theorem \ref{thm:approx} and conclude its proof.}

\subsection{Indexing of shuffle classes by vertices and a first approximation} \label{subsec:first}

Through this section, we will use the notation
\begin{align*}
N_j(T) := \# \{ \text{vertices in generation $j$ of $T$} \}
\end{align*}
for the number of vertices at graph distance $j$ from the root of {a tree} $T$.
We also write
\begin{align*}
N_{ \leq q}(T) := \sum_{0 \leq j \leq q} N_j(T) \qquad \text{and} \qquad N_{ \geq q}(T) := \sum_{j\geq q} N_j(T), 
\end{align*}
where the latter sum is clearly finite for any finite tree $T$. 

Recall that, given a tree $T$ and a path of vertices $(v_0,\ldots,v_p)$ in $T$, we let the shuffle class $\mathrm{Sh}(T;v_0,\ldots,v_p)$  be the set of trees $T'$ that may be obtained from $T$ by shuffling the subtrees {subtended by the siblings of $v_1,\ldots,v_p$.}
Consider now examining the set of shuffle classes containing a given tree $T$.
{Each} vertex $v$ lying at a height of at least $p$ in $T$ may be associated with a canonical path of $(v_0,\ldots,v_p)$ of vertices in $T$ by letting $v_p := v$ and $v_{p-(i+1)}$ be the unique parent of $v_{p-i}$, for each {$0 \leq i \leq p-1$}.
Therefore, vertices of height at least $p$ are in bijection with the set of paths of length $p$.
In particular, for each tree $T$, we might informally expect in a suitable sense {that}
\begin{align} \label{eq:counting}
\# \{ \text{{length-$p$} shuffle classes containing $T$} \} \approx N_{\geq p}(T).
\end{align}
We need to be careful however: it is entirely possible that two different choices $v$ and $v'$ of vertices of height at least $p$, and their associated paths, give rise to the \emph{same} shuffle class.
As a simple example, consider a tree $T$ containing two different perfect $p$-paths $(v_0,\ldots,v_p)$ and $(v'_0,\ldots,v'_p)${: then, both paths (or equivalently, both vertices} $v=v_p$ and $v'=v_p'$) give rise to the same shuffle class, i.e.\ the singleton $\{T\}$. 

It is however possible to make precise sense of \eqref{eq:counting} after taking on a suitable indexing of the shuffle classes.
{To do so, we } begin by noting that each vertex $v$ in a $d$-Catalan tree may be associated with a code in $\cup_{ j \geq 0} \{1,\ldots,d\}^j$: the root is given by the empty code, the $d$ children of the root, listed from left to right, are given by the $1$-tuples $(1),\ldots,(d)$, and, more generally, the children of an internal vertex with code $u$ are given by the concatenations $(u1),\ldots,(ud)$.
The length of the code of a vertex is the same as its height in the tree.
A $d$-Catalan tree itself may, as such, be regarded as a subset of $\cup_{ j \geq 0} \{1,\ldots,d\}^j$ {such that, if $(ui)\in T$ for some code $u$, then $(u)\in T$ and $(uj)\in T$ for all $j=1,\dots,d$}.
Let $(v_0,\ldots,v_p)$ be a path in a $d$-Catalan tree $T$, which we may then associate with a collection of codes of lengths $j,\ldots,j+p$ for some $j \geq 0$.
We note that every tree $T'$ in the shuffle class $\mathrm{Sh}(T;v_0,\ldots,v_p)$ contains this {collection} of codes.

We now define a function $\tilde{\psi}$ lying in the span of the indicator functions of the length-$p$ shuffle classes by letting
\begin{align} \label{eq:psidef}
\tilde{\psi} := \sum_{ j \geq p} \sum_{ v \in \{1,\ldots,d\}^j } \sum_{ \substack{\mathcal{S} = \mathrm{Sh}(T;v_0,\ldots,v_p{=v}) \\ \text{ for some $T\in\mathcal{C}_k^{(d)}$} }} \mathbf{1}_\mathcal{S}.
\end{align}
We now note that
\begin{align} \label{eq:psieq}
\tilde{\psi}(T) = N_{\geq p}(T).
\end{align}
Indeed, for every tree $T$, every vertex $v$ in $T$ of height at least $p$ (which has a unique code of length at least $p$) scores a $1$ in the sum $\tilde{\psi}$.
It follows that \eqref{eq:psidef} and \eqref{eq:psieq} make sense of \eqref{eq:counting}.

{In the following, our goal is to show that,} when $k$ is large compared to $p$, the vast majority of vertices in any tree $T$ with $k$ internal vertices have height at least $p$.
In fact, we can be far more specific: every tree $T$ in $\mathcal{C}_k^{(d)}$ has {at most $1 +d+ \cdots + d^{p-1}$} vertices in its generations $0$ through $p-1$, hence
\begin{align} \label{eq:sandwich}
{N_{\geq p}(T) \geq (dk+1) - (1 + d+\cdots + d^{p-1})}
\end{align}
In particular, we are equipped to give the following preliminary construction in the direction of the uniform bound {of Theorem \ref{thm:approx}.}

\begin{proposition}
Define 
\begin{align*}
\psi := \frac{1}{dk+1} \sum_{ j \geq p} \sum_{ v \in \{1,\ldots,d\}^j } \sum_{ \substack{\mathcal{S} = \mathrm{Sh}(T;v_0,\ldots,v_p=v) \\ \text{ for some $T\in\mathcal{C}_k^{(d)}$} }} \mathbf{1}_\mathcal{S}.
\end{align*}
Then $\psi$ lies in the span of the length-$p$ shuffle indicators, and satisfies the uniform bound
\begin{align} \label{eq:presup}
\mathrm{sup}_{T \in \mathcal{C}_k^{(d)}} | \psi(T) - 1 | \leq C_{d,p} /(dk+1),
\end{align}
where we take $C_{d,p} :=(d^p-1)(d-1)^{-1}$. 
\end{proposition}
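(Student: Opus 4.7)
My plan is to exploit, essentially verbatim, the identity $\tilde{\psi}(T) = N_{\geq p}(T)$ already established in \eqref{eq:psieq}, and then to do some elementary counting by generation.

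First, I would note that the membership of $\psi$ in $\mathrm{span}\{\mathbf{1}_\mathcal{S}\}$ is immediate from its definition: $\psi$ is a finite rational-weighted sum of indicators of length-$p$ shuffle classes (finite because each $T \in \mathcal{C}_k^{(d)}$ has depth at most $k$, so the outer sum over $j$ truncates at $j \leq k$). For the uniform bound, I would observe that $\psi = \tilde{\psi}/(dk+1)$, and therefore by \eqref{eq:psieq},
\begin{align*}
\psi(T) = \frac{N_{\geq p}(T)}{dk+1} \qquad \text{for every } T \in \mathcal{C}_k^{(d)}.
\end{align*}
Every tree in $\mathcal{C}_k^{(d)}$ has exactly $dk+1$ vertices in total (one root plus $dk$ children of internal vertices), so $N_{\geq p}(T) = (dk+1) - N_{\leq p-1}(T)$, and hence
\begin{align*}
\psi(T) - 1 = -\frac{N_{\leq p-1}(T)}{dk+1}.
\end{align*}

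It therefore suffices to bound $N_{\leq p-1}(T)$ uniformly in $T$. Since each vertex of a $d$-ary tree has at most $d$ children, a trivial induction gives $N_j(T) \leq d^j$ for every $j \geq 0$, so
\begin{align*}
N_{\leq p-1}(T) = \sum_{j=0}^{p-1} N_j(T) \leq \sum_{j=0}^{p-1} d^j = \frac{d^p - 1}{d-1} = C_{d,p}.
\end{align*}
Combining this with the previous display yields $|\psi(T) - 1| \leq C_{d,p}/(dk+1)$ uniformly over $T \in \mathcal{C}_k^{(d)}$, as required.

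I do not anticipate any genuine obstacle here; the proposition is really a clean corollary of \eqref{eq:psieq}, which already packaged the combinatorial content (the bijection between length-$p$ ancestral paths in $T$ and vertices of $T$ at height $\geq p$). The rest is just the observation that $|\mathcal{C}_k^{(d)}|$-vertex trees have total vertex count $dk+1$ together with the geometric-series bound $\sum_{j<p} d^j = (d^p-1)/(d-1)$.
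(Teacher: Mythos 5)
Your proof is correct and follows essentially the same route as the paper's: both rest on the identity $\tilde\psi = N_{\geq p}$ from \eqref{eq:psieq}, the fact that every tree in $\mathcal{C}_k^{(d)}$ has exactly $dk+1$ vertices, and the bound $N_{\leq p-1}(T) \leq 1 + d + \cdots + d^{p-1} = (d^p-1)/(d-1)$, which is precisely \eqref{eq:sandwich}. You merely spell out the geometric-series computation that the paper folds into the phrase \emph{``follows from \eqref{eq:sandwich}''}.
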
 
\begin{proof}
Clearly $\psi$ is a scalar multiple of $\tilde{\psi} = N_{ \geq p}$, and hence lies in the span of the length-$p$ shuffle indicators.
The bound \eqref{eq:presup} then follows from \eqref{eq:sandwich}.
\end{proof}

The previous result exhibits a function $\phi$ in the {span of the} length-$p$ shuffle indicators, for which the supremum norm $|| \phi - \mathbf{1}||_\infty$ decays like $1/k$.
In the next section, we develop {a more sophisticated approach to find} a function $\phi$ in the span of the length-$p$ shuffle indicators for which $|| \phi - \mathbf{1}||_\infty$ is bounded superpolynomially in $1/k$.

\subsection{Construction of the width product function}
\label{subsec:widthProduct}

We refine the ideas of Section \ref{subsec:first} with the following lemma.

\begin{lemma}\label{lem:prodShuffle}
Let ${\chi}\colon\mathcal{C}_k^{(d)} \to \mathbb{R}$ be a function that only depends on generations $0,1,\ldots,j$ of a $d$-Catalan tree{; namely,} if $T$ and $T'$ have the same generations $0,\ldots,j$, then $\chi(T) = \chi(T')$.
Then, the function $T\mapsto N_{j+p}(T) \chi(T)$ lies in the span of the length-$p$ shuffle indicators.
\end{lemma}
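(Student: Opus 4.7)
The plan is to mimic the vertex-indexed construction that produced $\tilde{\psi}(T)=N_{\geq p}(T)$ in Section~\ref{subsec:first}, but weighting each contribution by $\chi$. The crucial preliminary observation will be that, for length-$p$ shuffle classes whose path starts at a vertex of height exactly $j$, the quantity $\chi$ is a well-defined function of the shuffle class (not just of a representative).

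First I would establish this invariance. Let $\mathcal{S}=\mathrm{Sh}(T;v_0,\ldots,v_p)$ be a length-$p$ shuffle class such that $v_0$ has height $j$ in $T$ (equivalently, the endpoint $v_p$ has height $j+p$). By Definition~\ref{df:shuffle}, the shuffle operation only permutes the $(d-1)p$ subtrees $W_{i,j'}$ subtended by the siblings of $v_1,\ldots,v_p$; every such subtree is rooted at depth at least $j+1$. Consequently every $T'\in\mathcal{S}$ coincides with $T$ on generations $0,1,\ldots,j$, and since $\chi$ depends only on these generations one has $\chi(T')=\chi(T)$. This lets me define $\chi(\mathcal{S}):=\chi(T)$ unambiguously for any such class.

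Next I would define, in direct analogy with \eqref{eq:psidef},
\[
\phi \;:=\; \sum_{v\in\{1,\ldots,d\}^{j+p}}\;\sum_{\substack{\mathcal{S}=\mathrm{Sh}(T;v_0,\ldots,v_p=v)\\ \text{for some }T\in\mathcal{C}_k^{(d)}}} \chi(\mathcal{S})\,\mathbf{1}_\mathcal{S},
\]
which manifestly lies in the span of the length-$p$ shuffle indicators because the inner coefficients $\chi(\mathcal{S})$ are scalars. Evaluating $\phi(T)$ at a fixed tree $T$, the pair $(v,\mathcal{S})$ contributes precisely when $v$ is a vertex of $T$ of height $j+p$ and $\mathcal{S}$ is the unique length-$p$ shuffle class associated with the ancestral path of $v$ in $T$; each such vertex contributes exactly one pair, each contributing $\chi(\mathcal{S})=\chi(T)$ by the invariance step. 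Summing gives $\phi(T)=N_{j+p}(T)\,\chi(T)$, as required.

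I do not expect any substantial obstacle here: the shuffle class only moves subtrees strictly below generation $j$, so the invariance step is essentially immediate from the definitions, and the vertex-indexed counting argument is the same as the one already used to derive \eqref{eq:psieq}. The only point that deserves a sentence of care is checking that different endpoint codes $v\ne v'$ produce distinct contributions in the double sum — which is automatic since we sum over $(v,\mathcal{S})$ pairs rather than over shuffle classes, so any class with multiple valid endpoint codes is counted once per endpoint, matching the count of vertices of $T$ at height $j+p$.
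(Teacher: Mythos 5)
Your proof is correct and follows essentially the same approach as the paper: establish that $\chi$ is constant on each length-$p$ shuffle class whose terminal vertex has height $j+p$ (because such shuffling only affects generations below $j$), so each $\chi(T)\mathbf{1}_\mathcal{S}(T)$ is a scalar multiple of $\mathbf{1}_\mathcal{S}$, and then sum over all codes of length $j+p$ as in \eqref{eq:psidef} to obtain $N_{j+p}(T)\chi(T)$. The only difference is cosmetic: you make the vertex-code indexing fully explicit, whereas the paper leaves the "sum over all such shuffle classes obtained this way" slightly informal; the underlying argument is identical.
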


\begin{proof}
Let $v$ be a vertex of height $j+p$ in a tree $T$, and consider {the path $(v_0,\dots,v_p)$, with $v_p=v$, and the associated shuffle class $\mathcal{S} := \mathrm{Sh}(T; v_0,\dots,v_p)$.}
Then, $v_0$ has height $j$.
We note that shuffling the subtrees off the path $(v_0,\ldots,v_p)$ leaves generations $0,\ldots,j$ of the tree unaffected and, accordingly, $\chi$ is constant on $\mathcal{S}$.
In particular, the function $\mathbf{1}_\mathcal{S}(T) \chi(T)$ lies in the span of the length-$p$ shuffle indicators.
Summing over all such shuffle classes obtained this way, we obtain the function $N_{j+p}(T)\chi(T)$. 
\end{proof}

\begin{cor}\label{cor:prodShuffle}
For each integer $m \geq 1$, the function
\begin{align*}
{J_m(T):=} N_{ \geq m(p-1)+1}(T) \prod_{i=1}^{m-1}N_{\leq i(p-1)}(T)
\end{align*}
lies in the span of the {length-$p$ shuffle} indicators.
\end{cor}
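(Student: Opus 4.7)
The plan is to derive Corollary \ref{cor:prodShuffle} as a short and essentially direct consequence of Lemma \ref{lem:prodShuffle}, exploiting two elementary observations: firstly, that Lemma \ref{lem:prodShuffle} remains applicable if we replace its hypothesis ``$\chi$ depends only on generations $0,\ldots,j$'' by ``$\chi$ depends only on generations $0,\ldots,j'$'' for any $j' \le j$; and secondly, that $N_{\geq q}(T)$ is a finite sum $\sum_{q' \geq q} N_{q'}(T)$ for every finite tree $T$.

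The key step is to fix the weight factor once and for all. Set
\[
\chi(T) := \prod_{i=1}^{m-1} N_{\leq i(p-1)}(T).
\]
Since $N_{\leq i(p-1)}(T)$ is determined by generations $0,1,\ldots,i(p-1)$ of $T$, and the largest index appearing in the product is $i=m-1$, the function $\chi$ depends only on generations $0,1,\ldots,(m-1)(p-1)$.

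Next, for every integer $j \geq (m-1)(p-1)$, Lemma \ref{lem:prodShuffle} applies to $\chi$ with that choice of $j$ (since a function that depends only on generations $0,\ldots,(m-1)(p-1)$ certainly depends only on generations $0,\ldots,j$ for any larger $j$), and yields that the function $T \mapsto N_{j+p}(T)\chi(T)$ lies in the span of the length-$p$ shuffle indicators. The identity $(m-1)(p-1)+p = m(p-1)+1$ tells us that, as $j$ ranges over integers $\ge (m-1)(p-1)$, the index $q := j+p$ ranges over integers $\ge m(p-1)+1$. Summing these finitely many (for each fixed $T$) span elements over $q \geq m(p-1)+1$ gives
\[
\sum_{q \geq m(p-1)+1} N_q(T)\,\chi(T) \;=\; N_{\geq m(p-1)+1}(T)\,\chi(T) \;=\; J_m(T),
\]
so $J_m$ itself lies in the span, as desired.

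There is no real obstacle here beyond keeping careful track of which generations $\chi$ depends on and verifying the arithmetic identity $(m-1)(p-1)+p = m(p-1)+1$, which ensures that the two indexing conventions match up exactly. The one point deserving a brief remark in the write-up is that, although the sum over $q$ is formally infinite, only finitely many terms are nonzero on any given $T \in \mathcal{C}_k^{(d)}$ (since $T$ has finite height), so the span membership is preserved under this summation.
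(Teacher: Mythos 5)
Your proof is correct and takes essentially the same approach as the paper: fix $\chi(T) := \prod_{i=1}^{m-1}N_{\leq i(p-1)}(T)$, note it depends only on generations $0,\ldots,(m-1)(p-1)$, apply Lemma \ref{lem:prodShuffle} for each $j \geq (m-1)(p-1)$, and sum over the resulting indices $q=j+p \geq m(p-1)+1$. The only (correct) point you add is the explicit observation that the hypothesis of Lemma \ref{lem:prodShuffle} is monotone in $j$, which the paper uses implicitly.
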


\begin{proof}
{Let $\chi(T) := \prod_{i=1}^{m-1}N_{\leq i(p-1)}(T)$.
Then, by Lemma \ref{lem:prodShuffle},} $N_{j+p}(T)\chi(T)$ lies in the span of the length-$p$ shuffle indicators whenever {$j \geq (m-1)(p-1)$}.
Equivalently, $N_k(T) \chi(T)$ lies in the span of the shuffle indicators for all $k \geq m(p-1)+1$.
Summing over all such $k$, it follows that 
\begin{align*}
{\sum_{k \geq m(p-1)+1} N_k(T) \chi(T)
=N_{ \geq m(p-1)+1}(T) \prod_{i=1}^{m-1}N_{\leq i(p-1)}(T)}
\end{align*}
lies in the span of the length-$p$ shuffle indicators.
\end{proof}

Note that, since each tree in $\mathcal{C}_k^{(d)}$ has $dk+1$ vertices, we have the crude bound
\begin{align*}
J_m(T)
\leq ( dk+1)^m \mathbf{1} \{ \text{the height of $T$ is at least $m(p-1)+1$} \}.
\end{align*}
Since a tree in $\mathcal{C}_k^{(d)}$ has height at most {$k$}, $J_m(T)$ is identically zero {for large enough $m$.}
In particular, we have a well-defined infinite sum
{\begin{align*}
\begin{split}
\phi(T) &:= \sum_{m=1}^\infty \frac{J_m(T)}{ (dk+1)^m} \\
&= \sum_{m=1}^\infty \frac{(dk+1-N_{\leq m(p-1)}) \prod_{i=1}^{m-1}N_{\leq i(p-1)}(T)}{(dk+1)^m} \\
&= \sum_{m=1}^\infty \left(\prod_{i=1}^{m-1}\frac{N_{\leq i(p-1)}(T)}{dk+1} - \prod_{i=1}^{m}\frac{N_{\leq i(p-1)}(T)}{dk+1}\right).
\end{split}
\end{align*}
Since the latter is a telescopic sum, we obtain the expression
\begin{align} \label{eq:prod}
\phi(T) = 1 - \prod_{m=1}^\infty \frac{ N_{\leq m(p-1)}(T) }{ dk+1}.
\end{align}}
Again, the product may be taken over finitely many terms: since each tree $T$ in $\mathcal{C}_k^{(d)}$ has height at most {$k$}, we have that $N_{\leq m(p-1)}(T) / ( dk+1) =1$ whenever $m(p-1) \geq k$. 
Notice also that, since all $J_m$ lie in the span of the length-$p$ shuffle indicators by Corollary \ref{cor:prodShuffle}, $\phi$ also lies in the same span.
We now provide an upper bound on the quantity $||\phi-\mathbf{1}||_\infty$. 

\begin{lemma} \label{lem:sup}
Let {$d\geq 2$} and $k\in\mathbb N$ such that {$k\geq d^{6p}$.}
Then, for every $T \in \mathcal{C}_k^{(d)}$, we have 
$|1 - \phi(T)| \leq \exp ( - c_{d,p}  \log^2k )$, where $c_{d,p}\in(0,\infty)$ may be taken as $(4 p \log d)^{-1}$. 
\end{lemma}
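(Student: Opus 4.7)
The plan is to estimate $1 - \phi(T)$ directly from the product expression derived in \eqref{eq:prod}, namely
\[
1 - \phi(T) = \prod_{m=1}^{\infty} a_m(T), \qquad a_m(T) := \frac{N_{\leq m(p-1)}(T)}{dk+1}.
\]
Each factor admits two natural bounds: the trivial $a_m(T) \leq 1$, and the geometric $a_m(T) \leq 2 d^{m(p-1)}/(dk+1)$, the latter being an easy consequence of $N_{\leq q}(T) \leq (d^{q+1}-1)/(d-1) \leq 2d^q$ (valid for $d \geq 2$). The case $p=1$ is vacuous since then every $a_m(T)$ equals $1/(dk+1)$ and the infinite product vanishes; I therefore assume $p \geq 2$ from here on.

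For $p \geq 2$, I would truncate the product at a well-chosen index $M$ and apply the geometric bound to the first $M$ factors. Writing $A := \log((dk+1)/2)$ and $B := (p-1)\log d$, this gives
\[
- \log(1 - \phi(T)) \geq M A - \tfrac{B}{2} M(M+1),
\]
a downward-opening quadratic in $M$ whose continuous maximum $A^2/(2B) - A/2 + B/8$ is attained at $M^* = A/B - 1/2$. Taking the integer $M := \lfloor A/B \rfloor$ and writing $A/B = M + \delta$ with $\delta \in [0,1)$, a short direct computation shows that rounding loses only $B\delta(1-\delta)/2 \geq 0$, so
\[
- \log(1 - \phi(T)) \geq \frac{A^2}{2B} - \frac{A}{2}.
\]

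Finally, I would verify that this lower bound is at least $c_{d,p} \log^2 k = \log^2 k / (4p \log d)$. Setting $L := \log k$, one has $L \leq A \leq L + \log(2d)$ since $dk+1 \geq 2k$ for $d\geq 2$, so after rearranging, the desired inequality $A^2/(2B) - A/2 \geq L^2/(4p \log d)$ reduces to
\[
\frac{L^2 (p+1)}{2 p(p-1) \log d} \geq L + \log(2d),
\]
which holds with room to spare as soon as $L \geq 6p \log d$, i.e.\ $k \geq d^{6p}$. I expect this last step to be the main obstacle of the argument: although the leading coefficient $1/(2(p-1)\log d)$ produced by the optimization of $M$ is strictly larger than the target $1/(4p \log d)$ for every $p \geq 2$, the linear correction $-A/2$ must be absorbed into the slack between these coefficients, and the hypothesis $k \geq d^{6p}$ is exactly what provides enough margin to do so.
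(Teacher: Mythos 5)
Your proposal is correct and follows essentially the same route as the paper: bound $N_{\le q}(T)$ by a geometric quantity in $q$, truncate the resulting product for $1-\phi(T)$ at a suitable index, and estimate the exponent as a quadratic in the truncation point. The only differences are bookkeeping (the paper uses $N_{\le m(p-1)}\le d^{mp}$ and takes the truncation index to be the largest $m$ with $d^{mp}\le k$, thereby avoiding a separate treatment of $p=1$ and the explicit quadratic optimisation), and both land on the same conclusion under $k\ge d^{6p}$.
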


\begin{proof}
If $T$ in $\mathcal{C}_k^{(d)}$, then $N_j(T) \leq d^j$.
In particular, 
\[
N_{\leq m(p-1)}(T) = \sum_{ {j = 0}}^{m(p-1)} N_j(T) \leq \sum_{ j = 0}^{m(p-1)} d^j = \frac{ d^{m(p-1) + 1} - 1}{d-1} \leq d^{mp}
\qquad \text{for } d\geq 2. 
\]
Also, trivially we have $N_{\leq m(p-1)}(T) \leq dk+1$. 
It follows that
\begin{align*}
|1 - \phi(T)| = \prod_{m=1}^\infty \left|  \frac{ N_{\leq m(p-1)}(T) }{ dk+1} \right| \leq \prod_{m=1}^\infty \left| \frac{ d^{mp}}{ dk+1} \wedge 1 \right| \leq \prod_{m=1}^\infty \left| \frac{ d^{mp}}{ k} \wedge 1 \right| = \prod_{m=1}^{m_0}  \frac{ d^{mp}}{ k}  , 
\end{align*}
where in the final expression above, $m_0$ denotes the largest integer $m$ such that $d^{mp} \leq k$.
Then 
\begin{align*}
{m_0 = \left\lfloor \frac{ \log k}{ p \log d} \right\rfloor .}
\end{align*}
A calculation tells us that $ \prod_{m=1}^{m_0}  \frac{ d^{mp}}{ k}  = \exp \left( - m_0 \log k + p (\log d) m_0 (m_0+1)/2 \right)$.
Now, using the simple bound {$\frac{ \log k}{ p \log d}-1\leq m_0 \leq  \frac{ \log k}{ p \log d} $}, we obtain 
\begin{align*}
\begin{split}
- m_0 \log k + p \log d \frac{m_0 (m_0+1)}{2} 
&\leq  -  \frac{ \log^2 k}{ p \log d}  + \log k +  \frac{1}{2} \frac{(\log k)^2}{p \log d} + \frac{1}{2}  \log k \\
&=  -  \frac{1}{2} \frac{ \log^2 k}{ p \log d}  + \frac{3}{2} \log k.
\end{split}
\end{align*}
Now, since by assumption $\log (k) \geq {6p \log(d)}$, we arrive at
\begin{align*}
- m_0 \log k + p \log d \frac{m_0 (m_0+1)}{2}   &\leq  - \frac{ \log^2k}{ 4p \log d} ,
\end{align*}
thus completing the proof.
\end{proof}

While Lemma \ref{lem:sup} provides a uniform upper bound on $|1-\phi(T)|$ on all trees $T$ in $\mathcal{C}_k^{(d)}$, {we remark that,} for the typical tree $T$ in $\mathcal{C}_k^{(d)}$, the quantity {$|1-\phi(T)|$} is much smaller.
Sketching a few of the key steps here, it is known that, when $k$ is large, the typical tree $T$ in $\mathcal{C}_k^{(d)}$ has both height and width of order $\sqrt{k}$; see for example Addario-Berry, Devroye and Janson \cite{ADJ}.
Roughly speaking, this entails that
\begin{align} \label{eq:zz}
{|1 - \phi(T)|} =  \prod_{m=1}^\infty  \frac{ N_{\leq m(p-1)}(T) }{ dk+1} \sim \prod_{m=1}^{O(\sqrt{k}/p)} \frac{ O(\sqrt{k})}{ dk+1 } = e^{ - O\left( {\frac{\sqrt{k}}{p}} \log k \right)}.
\end{align}
It is possible to make \eqref{eq:zz} rigorous using the absolute bounds in \cite{ADJ}.
We leave the details to the interested reader.

\subsection{Proof of Theorem \ref{thm:approx}}
\label{subsec:proofApproximation}

In Section \ref{sec:exponential} we saw that a proportion of at most $e^{ -\kappa_{d,p} {(k-p)_+}}$ of the trees in $\mathcal{C}_k^{(d)}$ {are not $p$-perfect}.
So far in this section, we have seen that the function $\phi$ given by \eqref{eq:prod} has the property that $|1-\phi(T)| \leq e^{ - c_{d,p} (\log k)^2}$.
We now show it is possible to combine these two results to prove {Theorem \ref{thm:approx}.}

\begin{proof}[Proof of Theorem \ref{thm:approx}]

We begin by noting that, given \emph{any} function $\rho:\mathcal{C}_k^{(d)} \to \mathbb{R}$, the function 
\begin{align*}
{\rho(T)}\mathbf{1}\{T \text{ is $p$-perfect}\} 
\end{align*}
lies in the span of the length-$p$ shuffle indicators, since it is a linear combination of the indicator functions of {individual} $p$-perfect trees, which lie in singleton shuffle classes.

{Therefore, taking $\rho:=\phi$ as in \eqref{eq:prod}, both $(1 - \phi(T)) \mathbf{1}\{ T \text{ is $p$-perfect} \}$ and $\phi$ lie} in the span of the length-$p$ shuffle indicators.
In particular, so does the sum of these two functions, which is
\begin{align} \label{eq:phistar}
\phi_*(T) := \mathbf{1} \{ T \text{ is $p$-perfect} \} + \mathbf{1} \{ T \text{ is not $p$-perfect} \} \phi(T).
\end{align}
{We clearly have
\[
||\phi_*(T) - \mathbf{1} ||_\infty \leq ||\phi(T) - \mathbf{1} ||_\infty \leq e^{ - c_{d,p} (\log k)^2 }
\qquad\quad \text{for } k\geq d^{6p}
\]}
by Lemma \ref{lem:sup}, proving that $\phi_*$ satisfies \eqref{eq:yy}.

Moreover, for all $k\geq d^{6p}$, we have
\begin{align*}
||\phi_*(T) - \mathbf{1} ||_1 &:= \frac{1}{ \# \mathcal{C}_k^{(d)} } \sum_{T \in \mathcal{C}_k^{(d)} } | 1 - \phi_*(T) |\\
&= \frac{1}{ \# \mathcal{C}_k^{(d)} } \sum_{T \in \mathcal{C}_k^{(d)} \text{ not $p$-perfect} } | 1 - \phi(T) |\\
&\leq \frac{ \# \{ T \in \mathcal{C}_k^{(d)}\colon \text{$T$ is not $p$-perfect} \} }{ \# \mathcal{C}_k^{(d)} } e^{ - c_{d,p} (\log k)^2 } \\
& \leq e^{ - c_{d,p} (\log k)^2 - \kappa_{d,p} (k-p)}.
\end{align*}
In the second equality above we used the definition \eqref{eq:phistar}, in the following inequality we used Lemma~\ref{lem:sup}, whereas in the final inequality we used Theorem~\ref{thm:perfect}.
{Since $k-p$ diverges to $\infty$ faster than $(\log k)^2$ as $k\to\infty$, we have that $\phi_*$ also satisfies the $\ell^1$ bound \eqref{eq:xx}.}
\end{proof}


\section*{Acknowledgements}

This research was supported by the Heilbronn Institute for Mathematical Research (HIMR) as part of the HIMR Focused Research workshop titled: \emph{A graph-theoretic approach to the Jacobian conjecture}.

Piotr Dyszewski was partially supported by the National Science Centre, Poland (Sonata, grant number 2020/39/D/ST1/00258).

Samuel G.\ G.\ Johnston was partially supported in the early stages by the EPSRC funded Project EP/S036202/1 \emph{Random fragmentation-coalescence processes out of equilibrium}.

Joscha Prochno was supported by the Austrian Science Fund (FWF) Project P32405 \emph{Asymptotic geometric analysis and applications}.

Dominik Schmid acknowledges the DAAD PRIME program for financial support.

\end{document}